\numberwithin{equation}{section}
\newtheorem{theorem}{Theorem}[section]
\newtheorem{lemma}[theorem]{Lemma}
\newtheorem{proposition}[theorem]{Proposition}
\newtheorem{remark}[theorem]{Remark}
\newtheorem{definition}{Definition}
\newcommand{\mc}[1]{{\mathcal #1}}
\newcommand{\mf}[1]{{\mathfrak #1}}
\newcommand{\bb}[1]{{\mathbb #1}}
\newcommand{\eps}{\varepsilon}
\newcommand{\Y}{\mathcal{Y}}
\newcommand{\<}{\langle}
\renewcommand{\>}{\rangle}
\newcommand{\p}{\partial}
\newcommand{\pfrac}[2]{\genfrac{}{}{}{1}{#1}{#2}}
\newcommand{\at}[2]{\genfrac{}{}{0pt}{}{#1}{#2}}
 \newcommand{\A}{C^{2}[0,1]}
\newcommand{\bola}{\textrm{\Large\textperiodcentered}}
 \newcommand{\SN}{\mc S_{\textrm{\rm Neu}}(\bb R)}
\newcommand{\Sc}{\mc S(\bb R)}
\let\oldtocsection=\tocsection
\let\oldtocsubsection=\tocsubsection
\let\oldtocsubsubsection=\tocsubsubsection
\renewcommand{\tocsection}[2]{\hspace{0em}\oldtocsection{#1}{#2}}
\renewcommand{\tocsubsection}[2]{\hspace{1em}\oldtocsubsection{#1}{#2}}
\renewcommand{\tocsubsubsection}[2]{\hspace{2em}\oldtocsubsubsection{#1}{#2}}
\DeclareRobustCommand{\SkipTocEntry}[5]{}
\keywords{Exclusion processes, hydrodynamics, fluctuations, phase transition, 
 Ornstein-Uhlenbeck process}
\begin{document}

\title[SSEP with a slow site]{Scaling limits for the exclusion \\ process with a slow site}

\author{Tertuliano Franco}
\address{UFBA\\
 Instituto de Matem\'atica, Campus de Ondina, Av. Adhemar de Barros, S/N. CEP 40170-110\\
Salvador, Brazil}
\curraddr{}
\email{tertu@ufba.br}
\thanks{}

\author{Patr\'{\i}cia Gon\c{c}alves}
\address{\noindent Departamento de Matem\'atica, PUC-RIO, Rua Marqu\^es de S\~ao Vicente, no. 225, 22453-900, Rio de Janeiro, Rj-Brazil and CMAT, Centro de Matem\'atica da Universidade do Minho, Campus de Gualtar, 4710-057 Braga, Portugal.}
\email{patricia@mat.puc-rio.br}

\author{Gunter M. Sch\"utz}
\address{Institute of Complex Systems II, Forschungszentrum J\"ulich, 52428 J\"ulich, Germany}
\curraddr{}
\email{g.schuetz@fz-juelich.de}
\thanks{}

\subjclass[2010]{60K35, 26A24, 35K55}

\begin{abstract}
We consider the symmetric simple exclusion processes with a slow site in the discrete torus with $n$ sites.
In this model, particles perform nearest-neighbor symmetric random walks with jump rates everywhere equal
to one, except at one particular site, {\em{the slow site}}, where the jump rate of entering that site is equal to one, but the jump rate of leaving that site is given by a parameter $g(n)$. Two cases are treated, namely  $g(n)=1+o(1)$, and $g(n)=\alpha n^{-\beta}$ with $\beta>1$, $\alpha>0$.
In the former, both the  hydrodynamic behavior
and equilibrium fluctuations are driven by the heat equation (with periodic boundary conditions when in finite volume). In the latter, they are driven by the heat equation with  Neumann boundary conditions. We therefore establish the existence of a dynamical phase transition. The critical behavior remains  open.
\end{abstract}

\maketitle

\tableofcontents

\section{Introduction}
In the seventies, Dobrushin and Spitzer, see \cite{Spi} and references therein, initiated the idea of obtaining a mathematically precise understanding
of the emergence of macroscopic behavior in gases or fluids from
the microscopic interaction of a large number of identical particles with stochastic dynamics. This approach has
turned out to be extremely fruitful both in probability theory and statistical physics (e.g. see  the books  \cite{Spohn,kl})  and it still raises attention nowadays.
In this context, recent studies have been made in hydrodynamic limit/fluctuations of interacting particle systems in
random/non homogeneous medium, see for instance \cite{fl,fgn1,fgn2,jlt} and references therein.

So far, most of the work done in this field concerns the bulk hydrodynamics, i.e.,
the derivation of macroscopic partial differential equations
arising from the \emph{bulk interactions} of the underlying particle system. To this end,
one usually considers an infinite system or  a finite torus with periodic
boundary conditions and then takes the thermodynamic limit.
However, in applications to physical systems one is usually confronted with finite
systems, which requires the study of a partial differential equation on
a finite interval with prescribed boundary conditions. This raises the question from which microscopic
\emph{boundary interactions} a given type of boundary condition emerges at the macroscopic scale.

This is an important issue both for  boundary-driven open systems, 
where boundary
interactions can induce long-range correlations \cite{Spoh83}
and bulk phase transitions due to the absence of particle conservation
at the boundaries
\cite{Brza07}, and for bulk-driven
conservative systems on the torus where even a single defect bond between two neighboring sites
can change bulk relaxation behavior or
lead to macroscopic discontinuities in the hydrostatic density profiles.\footnote{See \cite{Jano92,Schu93,Baha04,Chen08,Basu14,Popk15} for numerical, exact and rigorous results
for the asymmetric simple exclusion process and \cite{Schu01,Scha10} for a review, including experimental applications of interacting particle systems with boundary interactions in physical and biological
systems.} Given such rich behaviour
due to boundary effects in non-conservative or bulk-driven systems
it is natural to explore the macroscopic role of a microscopic defect on
a torus in a conservative system in the \emph{absence} of bulk-driving and to ask whether such
a defect can be described on macroscopic scale in terms of a boundary condition
for the PDE describing the bulk hydrodynamics.

%
In this work we address this problem for the symmetric simple exclusion process (SSEP) 
on the discrete torus in the presence of a defect site.
The model can be described as follows. Each site of the discrete torus with $n$ sites, that we denote by
$\bb T_n= \bb Z/n\bb Z$,  is allowed to have at most one particle.
To each site is associated a Poisson
clock, all of them being independent.  If there is a particle in the associated site, this
particle chooses one of its nearest neighbors with equal probability when the clock rings. 
If the chosen site is empty, the particle
jumps to it. Otherwise nothing happens. All sites have a Poisson clock of parameter two, except the origin,
which has a Poisson clock of parameter $2g(n)$. If $g(n)<1$, the origin behaves as a
\emph{trap}, and (in average) it keeps a particle there for a longer time than the other sites do. We call this site a \emph{slow site}. The main results of the present work are the  hydrodynamic limit and the  equilibrium fluctuations for the exclusion process with such a
slow site. 

\begin{figure}[H]
  \centering
  \includegraphics{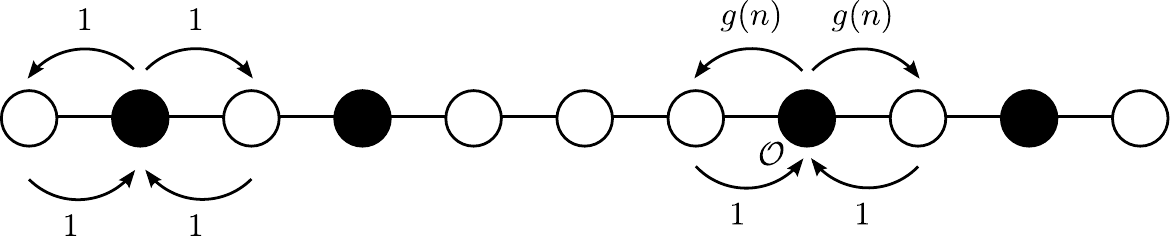}
  \label{fig:1}
\caption{Exclusion process with a {\emph{slow site}}.}
\end{figure}

 Specifically, for $g(n)=1+o(1)$ it is shown here that the limit for the time trajectory of the spatial density of particles is given by the solution of the heat equation with periodic boundary conditions, namely:
 \begin{equation}\label{PDE1}
\begin{cases}
 \partial_t \rho(t,u) \; =\; \p_{u}^2 \rho(t,u)\,,&t \geq 0,\, u\in \bb T\,,\\
  \rho(0,u) \;=\; \rho_0(u)\,, &u \in \bb T\,,\\
\end{cases}
\end{equation}
where $\bb T$ is the one-dimensional continuous torus. 
 
 Moreover, considering the same particle system evolving on $\bb Z$, we prove that the equilibrium fluctuations of the system are driven by a generalized Ornstein-Uhlenbeck process $\mc Y_t$ which is the solution of
  \begin{equation*}
d\mathcal{Y}_t= \Delta \mathcal{Y}_tdt+\sqrt{2\chi(p)} \nabla d\mc{W}_t\,
\end{equation*}
where $\mc{W}_t$ is a Brownian motion on the  space $\mc S'(\bb R)$ of tempered distributions and $\chi(p)$ is the compressibility, which is a coefficient related to the invariant measure of the system.  Both results are true irrespective of whether
$o(1)$ is positive or negative, i.e., of whether the origin is a fast site or a slow site.

 On the other hand,
if $g(n)=\alpha n^{-\beta}$, $\alpha>0$, $\beta>1$, the limit for the time trajectory of the spatial density of particles is given by the solution of the heat equation with Neumann boundary conditions, namely:
 \begin{equation}\label{PDE3}
\begin{cases}
 \partial_t \rho(t,u) \; =\; \p_{u}^2 \rho(t,u)\,,&t \geq 0,\, u\in (0,1)\,,\\
 \partial_u \rho(t,0^+) \; =\;\partial_u \rho(t,0^-)= 0\,,&t \geq 0\,,\\
  \rho(0,u) \;=\; \rho_0(u)\,, &u \in (0,1)\,,\\
\end{cases}
\end{equation}
where $0^+$ and $0^-$ denotes right and left side limits, respectively. This represents no passage of particles in the continuum limit.

 Again considering the same particle system evolving on $\bb Z$, we prove that the equilibrium fluctuations of the system when $g(n)=\alpha n^{-\beta}$, $\alpha>0$,  $\beta>1$ are driven by the solution of
  \begin{equation*}
d\mathcal{Y}_t= \Delta_{\textrm{Neu}} \mathcal{Y}_tdt+\sqrt{2\chi(p)} \nabla_{\textrm{Neu}} d\mc{W}_t\,,
\end{equation*}
which is essentially a version of the previous generalized Ornstein-Uhlenbeck process associated to the PDE \eqref{PDE3}, in the same setting of \cite{fgn2}. These Ornstein-Uhlenbeck processes are precisely stated in Section \ref{s2}.  

 We point out that a similar model with conservative dynamics on the torus has 
been considered in \cite{fgn1,fgn2,fl}, which consists in the SSEP with a \emph{slow bond} of intensity
$g(n)=\alpha n^{-\beta}$, $\alpha> 0$, $\beta\geq 0$. In that model particles perform nearest-neighbor symmetric random walks, whose jump rate is equal to one at all bonds, except at a particular \emph{bond}, where it is equal to $g(n)$. From \cite{fgn1,fgn2,fl} it is known that  hydrodynamic limit/fluctuations for exclusion processes with a slow bond have three
different behaviors depending on the regime of $\beta$.


 For the SSEP with a slow site that we treat here the methods used for the slow bond
problem cannot be adapted in any straightforward fashion as the asymmetry at the slow site gives rise to novel difficulties in the study of its hydrodynamic behavior and fluctuations. The model is reversible, as is the SSEP with a slow bond, but it is not \emph{self-dual, in contrast} to the SSEP with a slow bond. Moreover, the invariant measures for the SSEP with a slow site are not translation invariant, as happens for the SSEP with a slow bond. As a consequence, the proof of the hydrodynamic limit for the SSEP with a slow site requires   different approaches  from the ones of \cite{fgn1,fgn2,fl} and one cannot naively
extend the results obtained for the slow bond to the case of the slow site.

As described above, in this paper we are able to  characterize the hydrodynamic limit and the equilibrium fluctuations for $g(n)=\alpha n^{-\beta}$ when $\alpha>0$, $\beta>1$. The case $0\leq \beta\leq 1$ remains open. However, we present and motivate a conjecture on the behavior of the system in that case. Moreover, since we present also the hydrodynamic limit and the equilibrium fluctuations for $g(n)$ close to one, namely  $g(n)=1+o(1)$, the existence of a dynamical phase transition in the behaviour of the system from periodic boundary conditions to  Neumann boundary conditions at a critical value of $\beta$ in the range $0\leq \beta\leq 1$
is  established.

In order to put our results into a broader perspective conservative
particle systems with defects we point out that for a single slow bond the 
SSEP treated \cite{fgn1,fgn2,fl} exhibits the same hydrodynamic behaviour
as non-interacting random walks with a single slow bond. On the other hand, with a 
single slow site
both hydrodynamic limit and fluctuations of non-interacting particles would be driven by a disconnect behavior for any $\beta>0$,
leading to Dirichlet boundary conditions with boundary densities 0. Thus the similarity between the SSEP and non-interacting
particles that one finds for a slow bond breaks down for a slow site, adding further motivation for a detailed investigation
of  the SSEP with a slow site.



It is also worthwhile to compare our result with a result derived in \cite{jlt} for a related problem. The model  considered there is called the \emph{Bouchaud} trap model. In that model, particles perform independent random walks in a  random environment with traps given by i.i.d. alpha-stable random variables. In \cite{jlt} it was proved that the hydrodynamic limit for such  model is given by a generalized partial differential equation depending on an alpha-stable subordinator. The present paper suggests that a trap model of exclusion type should not have the same limit as obtained in \cite{jlt} for a trap model of independent random walks. 
 For the SSEP the asymmetry at the slow site yields a limit that has some properties in common with a slow bond and therefore a  behavior completely different from the one observed in \cite{jlt}.

Here follows the outline of this paper. In Section \ref{s2} we give notations, precise definitions and statements of the results. In Section \ref{s3} we present the hydrodynamic limit of the model. In Section \ref{s4} we present the equilibrium fluctuations (in infinite volume). In Section \ref{s5} we state a conjecture on what should be the complete scenario for exclusion processes with a slow site. In Section \ref{s6} we  present an extra result on the hydrodynamic behavior of the SSEP with $k$ neighboring slow bonds, which we use as an argument to sustain our conjecture in Section \ref{s5}.

\section{Statement of results}\label{s2}

\subsection{The model}
 A particle system can be constructed through its generator or via Poisson processes. In this work we will make use of both.

Let  $\bb T_n=\bb Z /n\bb Z=\{0,1,\ldots,n-1\}$ be the one-dimensional discrete torus with $n$ points. The simple symmetric exclusion process (SSEP) with a  slow site is
 the Markov process with state space $\{0,1\}^{\bb T_n}$ and with generator $\mf L_{n}$ acting on functions $f:\{0,1\}^{\bb T_n}\rightarrow \bb{R}$ as
\begin{equation}\label{generator}
\mf L_{n}f(\eta)=\sum_{\substack{x,y\in \bb T_n\\ |x-y|\leq 1}}\,\xi^{n}_x\,\eta(x)(1-\eta(y))\,[f(\eta^{x,y})-f(\eta)]\,,
\end{equation}
where the jump rates $\xi^n_x$ are given by
\begin{equation*}
\xi^n_x=\begin{cases}
g(n)\,,&\textrm{ if } \quad x=0\,,\\
1\,,&\textrm{ if }\quad  x\in \bb T_n\backslash\{0\} \,,\\
\end{cases}
\end{equation*}
where $g(n)>0$ and $\eta^{x,x+1}$ is the configuration obtained from $\eta$ by exchanging the occupation variables $\eta(x)$ and $\eta(x+1)$. Formally,
\begin{equation}\label{eta}
(\eta^{x,x+1})(y)=
\begin{cases}
\eta(x+1),& \mbox{ if }\quad y=x\,,\\
\eta(x),& \mbox{ if } \quad y=x+1\,,\\
\eta(y),& \mbox{ otherwise.}
\end{cases}
\end{equation}

Its dynamics can be described as follows. To each site we attach two Poisson processes, one corresponding to jumps from $x$ to $x+1$ and the other corresponding to jumps from $x$ to $x-1$. If the site $x$ is occupied and the site  $x+1$ is empty, the particle moves 
from site $x$ to site $x+1$ at a time arrival of the Poisson process associated to $\{x,x+1\}$,
and analogously for sites $\{x,x-1\}$. The jump rates corresponding to those transitions are shown in Figure \ref{fig:1}. 

For fixed $n$, let $\{\eta_\tau : \tau\ge 0\}$ be
the  Markov process with generator $\mf L_n$. Notice that  $\eta_\tau$ depends on $g(n)$, 
but we do not display this dependence in the notation. 
We denote by $\{\eta_t : t\ge 0\}$ 
the  Markov process with generator $n^2\mf L_n$.
This time factor $n^2$ is the so-called \emph{diffusive time scaling}. We observe that this is equivalent to define $\eta_t:=\eta_{n^2\tau}$.

Next we establish a family of invariant measures (in fact, reversible) for the dynamics introduced above.
\begin{proposition}\label{prop:2.1}
For any $p\in[0,1]$, the Bernoulli product measure $\nu_p$ on the space $\{0,1\}^{\bb T_n}$ with marginals given by
\begin{equation}\label{nub}
\nu_p\{\eta\,;\,\eta(x)=1\}\;=\;m_p(x)\;=\;
\begin{cases}
\displaystyle\frac{\pfrac{p}{g(n)}}{(1-p)+\pfrac{p}{g(n)}}\;,&\textrm{ if }\quad x=0\,,\\
p\,,&\textrm{ if }\quad x\in \bb T_n\backslash\{0\}\,,\\
\end{cases}
\end{equation}
is reversible for the Markov process $\{\eta_\tau : \tau\ge 0\}$.
\end{proposition}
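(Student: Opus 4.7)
The strategy is to verify the detailed balance condition directly; since the state space is finite and the generator consists of nearest-neighbour swap transitions, reversibility of $\nu_p$ is equivalent to
\[
\nu_p(\eta)\, c(x,x+1;\eta) \;=\; \nu_p(\eta^{x,x+1})\, c(x,x+1;\eta^{x,x+1})
\]
holding for every $\eta\in\{0,1\}^{\bb T_n}$ and every bond $\{x,x+1\}\subset \bb T_n$, where $c(x,x+1;\eta):=\xi_x^n\eta(x)(1-\eta(x+1))+\xi_{x+1}^n\eta(x+1)(1-\eta(x))$ is the rate at which the bond $\{x,x+1\}$ is swapped. Since this transition is nontrivial only when $\eta(x)\neq \eta(x+1)$, it suffices to treat the case $\eta(x)=1,\,\eta(x+1)=0$, which by symmetry gives the opposite case as well.

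First, I would dispatch all bonds that do not touch the origin. If $\{x,x+1\}\subset \bb T_n\setminus\{0\}$, then $\xi_x^n=\xi_{x+1}^n=1$ and the marginals satisfy $m_p(x)=m_p(x+1)=p$; since $\nu_p$ is a product measure, the ratio $\nu_p(\eta)/\nu_p(\eta^{x,x+1})$ equals $m_p(x)(1-m_p(x+1))/[(1-m_p(x))m_p(x+1)]=1$, and detailed balance is immediate.

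Next, I would handle the two bonds $\{0,1\}$ and $\{n-1,0\}$ adjacent to the slow site. Taking for concreteness $x=0$ and $\eta(0)=1$, $\eta(1)=0$, the product structure of $\nu_p$ yields
\[
\frac{\nu_p(\eta)}{\nu_p(\eta^{0,1})} \;=\; \frac{m_p(0)}{1-m_p(0)}\cdot\frac{1-m_p(1)}{m_p(1)}.
\]
From the explicit formula for $m_p(0)$ in \eqref{nub} one computes $m_p(0)/(1-m_p(0))=(p/g(n))/(1-p)$, while $(1-m_p(1))/m_p(1)=(1-p)/p$; multiplying these two factors gives $1/g(n)$. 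Hence
\[
\nu_p(\eta)\,\xi_0^n \;=\; \nu_p(\eta)\,g(n) \;=\; \nu_p(\eta^{0,1})\,\cdot 1 \;=\; \nu_p(\eta^{0,1})\,\xi_1^n,
\]
which is exactly detailed balance on the bond $\{0,1\}$. The bond $\{n-1,0\}$ is treated symmetrically.

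There is no serious obstacle: the definition of $m_p(0)$ in \eqref{nub} is precisely engineered so that the ratio of marginals at the slow site cancels the asymmetry of the rates $\xi_0^n$ vs.\ $\xi_1^n$. Summing the detailed-balance identities over all bonds yields $\int f\,\mf L_n h\,d\nu_p = \int h\,\mf L_n f\,d\nu_p$ for every $f,h:\{0,1\}^{\bb T_n}\to\bb R$, which proves reversibility (and, in particular, invariance) of $\nu_p$.
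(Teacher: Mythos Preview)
Your proof is correct and follows exactly the approach indicated in the paper: the authors write that ``the proof of this proposition consists only in checking the detailed balance equation, which is straightforward and for that reason it will be omitted,'' and you have carried out precisely this verification. The key computation---that the marginal at the origin is chosen so that $m_p(0)/(1-m_p(0))=(p/g(n))/(1-p)$ compensates the asymmetry $\xi_0^n/\xi_1^n=g(n)$---is exactly the content of the omitted detailed-balance check.
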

The proof of this proposition consists only in checking the detailed balance equation, which is straightforward and for that reason it will be omitted.

Above and in what follows, a sub-index in a function means a variable, \emph{not a derivative}.
Denote by $\bb T$ the one-dimensional continuous torus $\bb R/\bb Z=[0,1)$ and by $\<\cdot,\cdot\>$ the inner product in $L^2[0,1]$.

\subsection{Hydrodynamics}
\begin{definition}\label{heat equation Per} Let $\rho_0:\bb T\to [0,1]$ be a measurable function.
We say that $\rho$ is a weak solution of the heat equation with periodic boundary conditions given by
\begin{equation}\label{hep}
\left\{
\begin{array}{ll}
 \partial_t \rho(t,u) \; =\; \partial^2_u \rho(t,u)\,, &t \geq 0,\, u\in \bb T\,,\\
  \rho(0,u) \;=\; \rho_0(u), &u \in \bb T\,,
\end{array}
\right.
\end{equation}
if, for  all $t\in [0,T]$ and for all  $H\in C^2(\bb T)$,
\begin{equation}\label{eqint2}
\begin{split}
\< \rho_t,H_t\>\!-\!\<\rho_0,H_0\>
\!-\!\! \int_0^t\!\!\!\big\< \rho_s, \,& \partial^2_u H_s\big\> ds\;=\;0\,.
\end{split}
\end{equation}
\end{definition}
Next, we define what we mean by weak solutions of the heat equation with  Neumann boundary conditions, as given in \eqref{PDE3}.
We introduce first some technical background on Sobolev spaces.

\begin{definition}\label{Sobolevdefinition}
Let  $\mc H^1$ be the set of all  $L^1$ functions $\zeta: [0,1]\to\bb R$ such that
there exists a function $\p_u\zeta\in L^2[0,1]$ satisfying
\begin{equation*}
 \<\partial_uG,\zeta\>\,=\,-\<G,\partial_u\zeta\>\,,
\end{equation*}
for all $G\in C^{\infty}[0,1]$ with compact support contained in $(0,1)$.
For $\zeta\in\mc H^1$, we define the norm
\begin{equation*}
 \Vert \zeta\Vert_{\mc H^1}\,:=\, \Big(\Vert \zeta\Vert_{L^2[0,1]}^2+\Vert\partial_u\zeta\Vert_{L^2[0,1]}^2\Big)^{1/2}\,.
\end{equation*}
Let $L^2(0,T;\mc H^1)$ be the space of all measurable functions $\xi:[0,T]\to \mc H^1$ such that
\begin{equation*}
\Vert\xi \Vert_{L^2(0,T;\mc H^1)}^2 \,
:=\,\int_0^T \Vert \xi_t\Vert_{\mc H^1}^2\,dt\,<\,\infty\,.
\end{equation*}
\end{definition}


Abusing notation slightly, we denote by $\A$  the set of functions $H:\bb T\to\bb R$ that are continuously twice differentiable in $\bb T\backslash\{0\}$ and have a $C^2$-extension  to the closed interval $[0,1]$.
\begin{definition}\label{heat equation Neumann}
Let $\rho_0:\bb T\to [0,1]$ be a measurable function.
 We say that $\rho$ is a weak solution of the heat equation with Neumann boundary conditions
 \begin{equation}\label{hen}
\left\{
\begin{array}{ll}
 \partial_t \rho(t,u) \; =\; \p^2_u \rho(t,u)\,, &t \geq 0,\, u\in (0,1)\,,\\
 \partial_u \rho(t,0^+) \; =\;\partial_u \rho(t,0^-)= 0\,, \qquad &t \geq 0\,,\\
 \rho(0,u) \;=\; \rho_0(u), &u \in (0,1)\,,
\end{array}
\right.
\end{equation}
if $\rho$ belongs to $L^2(0,T;\mathcal{H}^1)$ and for all $t\in [0,T]$ and for all  $H\in \A$,
\begin{equation*}
\begin{split}
\< \rho_t\,,\,H\>-\<\rho_0\,,\,H\> & - \int_0^t\big\< \rho_s\,,\,\p^2_u H\big\>\, ds\\
& -\int_0^t\big(\rho_s(0^+)\,\partial_u H(0^+)-\rho_s(0^-)\,\partial_u H(0^-)\big)\,ds\;=\;0\,.\\
\end{split}
\end{equation*}
\end{definition}

Let $\mc D(\bb R_+, \{0,1\}^{\bb T_n})$ be the path space of
c\`adl\`ag\footnote{From the French, ``continuous from  the right with limits from the left''.} trajectories with values in $\{0,1\}^{\bb T_n}$. For a
measure $\mu_n$ on $\{0,1\}^{\bb T_n}$, denote by $\bb P_{\mu_n}$ the
probability measure on $\mc D(\bb R_+, \{0,1\}^{\bb T_n})$ induced by the
initial state $\mu_n$ and the Markov process $\{\eta_t : t\ge 0\}$. Notice that in fact $\bb P_{\mu_n}=\bb P_{\mu_n}^{g(n),n}$ but  we will not carry the dependence on $n$ nor $g$ in order to not overload notation. By  $\bb E_{\mu_n}$ we mean the expectation with respect to $\bb P_{\mu_n}$.

The notation $\eta_\bola$ is reserved to represent elements of the Skorohod space $\mc D(\bb R_+, \{0,1\}^{\bb T_n})$, i.e., time trajectories of the exclusion process with a slow site. This notation $\eta_\bola$ should not be confused with the notation $\eta$ for elements of $\{0,1\}^{\bb T_n}$.

From now on we fix a  profile $\gamma : \bb T \to {[0,1]}$, representing the initial density of particles. To avoid uninteresting technical complications, we assume that $\gamma$ is continuous at all $x\in \bb T\backslash \{0\}$ and bounded from below by a positive constant: 
\begin{equation}\label{assumption}
\zeta\;:=\;\inf_{x\in\bb T}\gamma(x)\;>\;0.
\end{equation}

\begin{theorem}\label{th:1}
For each $n\in\bb N$, let $\mu_n$ be a Bernoulli product measure on $\{0,1\}^ {\bb T_n}$ with marginal distributions given by
\begin{equation}\label{eq288}
\mu_n\{\eta\,;\,\eta(x)=1\}=\gamma(\pfrac{x}{n})\,.
\end{equation}
   Then, for any $t>0$, for every $\delta>0$ and every $H\in C(\bb{T})$, it holds that
\begin{equation}\label{eq:2.2}
\lim_{n\to\infty}
\bb P_{\mu_n} \Big\{\eta_\bola : \, \Big\vert \pfrac{1}{n} \sum_{x\in\bb{T}_n}
H(\pfrac{x}{n})\, \eta_{t}(x) - \int_{\bb T} H(u)\, \rho(t,u) du \Big\vert
> \delta \Big\} \;=\; 0\,,
\end{equation}
 where
\vspace{0.2cm}
 \begin{itemize}
\item
for $g(n)=1+o(1)$,  $\rho$ is the unique weak solution of \eqref{hep};
\vspace{0.2cm}

\item  for $g(n)=\alpha n^{-\beta}$, $\alpha>0$, $\beta>1$,  $\rho$ is the unique weak solution of  \eqref{hen};
\vspace{0.2cm}
\end{itemize}
and where, in both cases, the initial condition of the corresponding partial differential equation is given by $\rho_0=\gamma$.
\vspace{0.2cm}
\end{theorem}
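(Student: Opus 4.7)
The plan follows the standard three-step program of tightness, identification, and uniqueness applied to the empirical measures $\pi^n_t := \frac{1}{n}\sum_{x\in\bb T_n}\eta_t(x)\,\delta_{x/n}$ viewed as random elements of $\mc D([0,T],\mc M_+(\bb T))$. Tightness follows from Aldous' criterion applied coordinate-wise to $\<\pi^n_\bola, H\>$ for each test function in a countable dense family, once one verifies that the Dynkin martingale $M^n_t(H) := \<\pi^n_t, H\> - \<\pi^n_0, H\> - \int_0^t n^2\mf L_n\<\pi^n_s, H\>\,ds$ has predictable quadratic variation of order $1/n$ and that $n^2\mf L_n\<\pi^n, H\>$ is uniformly bounded. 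Convergence of $\pi^n_0$ to $\gamma(u)\,du$ in probability follows from the law of large numbers applied to $\mu_n$, and uniqueness of the weak solution is classical for \eqref{hep} and follows by a standard $L^2(0,T;\mc H^1)$-energy argument for \eqref{hen}. The heart of the proof is therefore to identify any subsequential limit point as a weak solution of the correct PDE.

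For the identification I would write $\mf L_n$ in terms of the edge currents $j_{x,x+1} = \xi^n_x\eta(x)(1-\eta(x+1)) - \xi^n_{x+1}\eta(x+1)(1-\eta(x))$ and apply two discrete integrations by parts. Decomposing $j_{x,x+1}$ as the symmetric current $\eta(x)-\eta(x+1)$ plus a correction supported on the two edges adjacent to the origin yields
\begin{equation*}
n^2\mf L_n\<\pi^n, H\> \;=\; \<\pi^n, \Delta_n H\> \;+\; (1-g(n))\,\eta(0)\big[\delta_n H(-\tfrac{1}{n})(1-\eta(-1)) - \delta_n H(0)(1-\eta(1))\big],
\end{equation*}
where $\Delta_n H(x/n) = n^2[H((x+1)/n) - 2H(x/n) + H((x-1)/n)]$ and $\delta_n H(x/n) = n[H((x+1)/n) - H(x/n)]$ are the rescaled discrete Laplacian and gradient. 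In case (1) with $H\in C^2(\bb T)$, $\Delta_n H\to H''$ uniformly and $(1-g(n)) = o(1)$, so the correction is negligible and the first term converges to $\<\rho_s, H''\>$, yielding the weak form of \eqref{hep}. In case (2) with $H\in\A$, one-sided Taylor expansions at $0$ give $\Delta_n H(x/n)\to H''(x/n)$ for $x\neq 0$, whereas $\Delta_n H(0)\sim n(H'(0^+)-H'(0^-))$ diverges and contributes an anomalous term $\eta_s(0)(H'(0^+)-H'(0^-))$ to $\<\pi^n, \Delta_n H\>$; simultaneously $(1-g(n))\to 1$, $\delta_n H(-1/n)\to H'(0^-)$ and $\delta_n H(0)\to H'(0^+)$, so the correction converges to $\eta_s(0)[H'(0^-)(1-\eta_s(-1)) - H'(0^+)(1-\eta_s(1))]$. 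The linear-in-$\eta_s(0)$ pieces cancel exactly, leaving $\<\rho_s,H''\> + H'(0^+)\eta_s(0)\eta_s(1) - H'(0^-)\eta_s(0)\eta_s(-1)$ as the candidate limit of $n^2\mf L_n\<\pi^n_s,H\>$.

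The main obstacle is therefore the replacement $\eta_s(0)\eta_s(\pm 1)\rightsquigarrow \rho_s(0^\pm)$ needed to recover the boundary terms of Definition~\ref{heat equation Neumann}. My plan is a two-step argument: first a \emph{trapping lemma} of the form $\int_0^T\bb E_{\mu_n}[1-\eta_s(0)]\,ds\to 0$, which reduces $\eta_s(0)\eta_s(\pm 1)$ to $\eta_s(\pm 1)$ in time-averaged $L^1$; and then a standard one-block argument identifying $\eta_s(\pm 1)$ with a mesoscopic average converging to $\rho_s(0^\pm)$. Trapping is heuristically clear, since the reversible marginal $m_p(0) = (p/g(n))/((1-p)+p/g(n))$ in \eqref{nub} tends to $1$ whenever $g(n)\to 0$, but the initial measure $\mu_n$ has density $\gamma(0)\geq\zeta>0$ at the origin, so a quantitative relaxation bound is required; I would derive it from a relative entropy estimate of the law of $\eta_t$ against an appropriate $\nu_p$ combined with a Dirichlet form bound on the occupation time of $\{\eta:\eta(0)=0\}$. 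The non-translation-invariance of $\nu_p$ and the absence of self-duality emphasized in the introduction are precisely what prevents a direct transposition of the slow-bond methods of \cite{fgn1,fgn2,fl}, and I expect the trapping lemma, together with the companion energy estimate placing $\rho\in L^2(0,T;\mc H^1)$, to be the technically most delicate part of the proof.
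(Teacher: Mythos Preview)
Your overall architecture—tightness via the Dynkin martingale, the energy estimate placing limit points in $L^2(0,T;\mc H^1)$, the one-block replacement of $\eta_s(\pm 1)$ by a mesoscopic average, and uniqueness of weak solutions—matches the paper's scheme closely. Your edge-current decomposition of $n^2\mf L_n\langle\pi^n,H\rangle$ is equivalent to the paper's formula \eqref{integralterm}; the paper instead fixes the convention $H(0)=0$ for $H\in\A$, which cleans up the anomalous $\Delta_nH(0)$ term you have to track (your claim $\Delta_nH(0)\sim n(H'(0^+)-H'(0^-))$ silently assumes $H(0^+)=H(0^-)$ or $H(0)=\tfrac12(H(0^+)+H(0^-))$; otherwise it is of order $n^2$). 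This is a fixable technicality.

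The substantive divergence is the trapping lemma. You propose to obtain $\int_0^T\bb E_{\mu_n}[1-\eta_s(0)]\,ds\to 0$ from an entropy/Dirichlet-form variational bound. This route runs into a real obstruction: the relevant variational quantity $\sup_f\{\int(1-\eta(0))f\,d\nu_p - c\,\mf D_n(\sqrt f)\}$ does not close easily, because the density $f=(1-m_p(0))^{-1}\mathbf 1_{\{\eta(0)=0\}}$ has $\int(1-\eta(0))f\,d\nu_p=1$ while $\mf D_n(\sqrt f)=O(1)$ (only the two edges touching the origin contribute, and detailed balance makes each contribution of order $p$, not large). So the naive Feynman--Kac bound gives only an $O(1)$ estimate, regardless of how you tune $N$. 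A more delicate spectral analysis might rescue this, but it is not the ``standard'' argument you suggest.

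The paper instead exploits \emph{attractiveness}. After a harmless conditioning on $\{\eta(0)=1\}$ at time zero (which changes the empirical measure by $O(1/n)$), the product structure of $\mu_n$ and the assumption $\gamma\geq\zeta>0$ yield the stochastic domination $\mu_n\geq_{\textrm{st}}\nu_p$ for small $p>0$. Since attractiveness is preserved by the dynamics, one gets $\bb P_{\mu_n}\{\eta_s(0)=0\}\leq \bb P_p\{\eta_s(0)=0\}=1-m_p(0)$ for all $s$, and the right-hand side is \emph{explicit} and of order $g(n)=\alpha n^{-\beta}$. This bypasses entirely the relaxation estimate you anticipated needing, and even gives the stronger statement $n\int_0^t(1-\eta_s(0))\,ds\to 0$ in probability (Proposition~\ref{prop:3.2}). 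Your decomposition happens not to require that extra factor of $n$, but the attractiveness proof is both shorter and sharper than the entropy route.
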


\begin{remark}\rm About the constants: To avoid repetitions along the paper, we fix, once and for all, the assumptions $\alpha>0$ and $\beta>1$.
\end{remark}

\begin{remark}\rm
  About the statement of the Theorem: If at the initial time the density of particles converges to the profile $\gamma(\cdot)$, then, in the future time $t$, the density of particles converges to a profile $\rho(t,\cdot)$ which is the weak solution of the heat equation with the corresponding boundary conditions  and with initial condition $\rho_0=\gamma$.
\end{remark}
\begin{remark}\rm
About the scaling: In the claim  of the Theorem \ref{th:1} one can see that the space is rescaled by $n^{-1}$ (space between sites) and  time is rescaled by $n^2$, since the ``future time'' is indeed $tn^2$. This is the \emph{diffusive} time scaling.
\end{remark}
\begin{remark}\rm
About the initial measure: We can weaken the hypothesis on  $\mu_n$ by dropping the condition of being a product measure,  and assuming that $\{\mu_n\}_{n\in\bb N}$ is \emph{associated to} $\gamma(\cdot)$, see \cite{kl}. In that case the statement of Theorem \ref{th:1} remains in force. However, this hypothesis would complicate  the attractiveness tools at Subsection \ref{LGN} and for this reason we assume \eqref{eq288}.
\end{remark}
\begin{remark}\rm
About the weak solution: The weak solution of  \eqref{hen} is a function defined on the interval $[0,1]$, not on the torus. But, as already explained, Lebesgue almost sure, it is the same. Thus it makes sense to integrate $\rho$ in the torus $\bb T$, as it appears in the equation
\eqref{eq:2.2}.
\end{remark}

\subsection{Equilibrium density fluctuations}
In this section we consider $\eta_t$ evolving on the one-dimensional lattice $\bb Z$ and starting from the invariant state $\nu_p$, with $p\in(0,1)$. Therefore, the generator of the process is given  by \eqref{generator} with $\bb T_n$ replaced by $\bb Z$, namely
\begin{equation*}
\mf L_{n}f(\eta)=\sum_{\substack{x,y\in \bb Z\\ |x-y|\leq 1}}\,\xi^{n}_x\,\eta(x)(1-\eta(y))\,[f(\eta^{x,y})-f(\eta)]\,,
\end{equation*}
for local functions $f:\{0,1\}^{\bb Z}\rightarrow \bb{R}$.

 From now on we fix $p\in(0,1)$.
In order to establish the central limit theorem (C.L.T.) for the density under the invariant state $\nu_{p}$,
we need to introduce the  fluctuation field as the linear functional acting on test functions $H$ as
\begin{equation}\label{flut}
 \mc Y^n_{t}(H)= \frac{1}{\sqrt n}\sum_{x\in{\mathbb{Z}}}H\Big(\frac{x}{n}\Big)(\eta_{t}(x)-m_p(x)), 
\end{equation}
where $m_p(x)$ is the mean of $\eta_t(x)$ with respect to $\nu_p$ introduced in \eqref{nub}. We emphasize that  $\{\eta_t : t\ge 0\}$ is
the  Markov process with generator $n^2\mf L_n$. Denote by $\bb P_p$ the probability measure on  the Skorohod path space $\mc D(\bb R_+, \{0,1\}^{\bb Z})$ induced by the initial state $\nu_p$ and the Markov process $\{\eta_t : t\ge 0\}$ and we denote by $\bb E_p$ the expectation  with respect to $\bb P_p$.

Now we introduce the space of test functions. Since the hydrodynamics is governed by different partial differential equations, the state space for $H$  depends on the jump rate $g(n)$ that we defined at the slow site.

\begin{definition}\label{schwartz} 
Let $\mc S(\bb R)$ be the usual Schwartz space of functions $H:\bb R\to\bb R$ such that $H\in C^\infty(\bb R)$  and 
\begin{equation*}
 \Vert H \Vert_{k,\ell}\;:=\;\sup_{x\in \bb R}|(1+|x|^\ell)
\,\frac{d^kH}{dx^k}(x)|\;<\;\infty\,,
\end{equation*}
for all integers $k,\ell\geq 0$. We define  $\mc S_{\textrm{Neu}}(\bb R)$ as the space composed  of functions $H:\bb R\to\bb R$ such that
\begin{enumerate}
\item Except possibly  at $x=0$, the function $H$ is continuous and infinitely differentiable,
\item The function $H$ is continuous from the right at zero,
\item  For all integers $k,\ell\geq 0$,
\begin{equation*}
 \Vert H \Vert_{k,\ell,+}\;:=\;\sup_{x> 0}|(1+|x|^\ell)
\,\frac{d^k}{dx^k}H(x)|\;<\;\infty\,,
\end{equation*}
and 
\begin{equation*}
 \Vert H \Vert_{k,\ell,-}\;:=\;\sup_{x< 0}|(1+|x|^\ell)
\,\frac{d^kH}{dx^k}(x)|\;<\;\infty\,,
\end{equation*}
\item For any integer $k\geq 0$, 
\[
\lim_{x\to 0^+ }\frac{d^{2k+1} H}{dx^{2k+1}}(x)\;=\; 
\lim_{x\to 0^- }\frac{d^{2k+1} H}{dx^{2k+1}}(x)
\;=\;0.
\]
\end{enumerate}
\end{definition}
Notice that it is not required that $H$ is continuous at $x=0$.  Intuitively, this space $\SN$ corresponds to two independent Schwartz spaces in each half line. The chosen notation comes from the expression  \textit{Neumann boundary conditions}. 

Both spaces $\Sc$ and $\SN$ are Fr\'echet spaces. The proof that $\Sc$ is Fr\'echet can be found in  \cite{rs}, for instance. The proof that $\SN$ is Fr\'echet is quite similar and will be omitted. 

The set of continuous linear functions $f:\Sc\to\bb R$ and $f:\SN\to\bb R$ with respect to the topology generated by the corresponding semi-norms will be denoted by $\mc S'(\bb R)$ and  
$\mc S_{\textrm{Neu}}'(\bb R)$, respectively.

The notation $\nabla$ and $\Delta$ mean the first and second space derivatives. In the case of $\SN$, we will make use of the following definition:
\begin{definition}
We define the operators  $\nabla_{\textrm{Neu}}: \SN \rightarrow \SN$  and
$\Delta_{\textrm{Neu}}: \SN\rightarrow \SN$ by   
\begin{equation*}
\nabla_{\textrm{\rm Neu}} H(u)\;=\;\begin{cases}
\frac{dH}{du}(u), &  \mbox{if}\,\,\,\,u\neq 0\,,\\
\lim_{u\to 0^+ }\frac{dH}{du}(u), &\mbox{if}\,\,\,\,u=0\,,
\end{cases}
\end{equation*}
\begin{equation*}
\Delta_{\textrm{\rm Neu}} H(u)\;=\;\begin{cases}
\frac{d^2H}{du^2}(u), &  \mbox{if}\,\,\,\,u\neq 0\,,\\
\lim_{u\to 0^+ }\frac{d^2H}{du^2}(u), &\mbox{if}\,\,\,\,u=0\,,
\end{cases}
\end{equation*}  
\end{definition}  
\noindent Notice that these operators are  essentially the first and second space derivatives, but defined  in specific domains, which changes the meaning of the operator. Roughly speaking, the operator $\Delta_{\textrm{\rm Neu}}$ is the operator associated to a system blocked at the origin. 
\subsection{Ornstein-Uhlenbeck process}

Denote by $\chi(p)=p(1-p)$ the so-called \textit{static compressibility} of the system.
Based on \cite{HS, kl}, we have a characterization of the generalized Ornstein-Uhlenbeck process, which is a  solution of
\begin{equation}\label{eq Ou}
d\mathcal{Y}_t= \Delta \mathcal{Y}_tdt+\sqrt{2\chi(p)} \nabla d\mc{W}_t\,,
\end{equation}
 where $d\mc{W}_t$ is a space-time white noise of unit variance, in terms of a martingale problem. We will see later that this process, which take values on $\mc S'(\bb R)$, governs the equilibrium fluctuations of the density of particles when the strength of the slow site is given by $g(n)=1+o(1)$. 
 
 On the other hand, when the strength is given by $g(n)=\alpha n^{-\beta}$, the corresponding  
Ornstein-Uhlenbeck process will be the solution of 
\begin{equation}\label{eq Ou Neu}
d\mathcal{Y}_t= \Delta_{\textrm{\rm Neu}} \mathcal{Y}_tdt+\sqrt{2\chi(p)} \nabla_{\textrm{\rm Neu}} d\mc{W}_t\,,
\end{equation}
and taking values on $\mc S'_{\textrm{\rm Neu}}(\bb R)$.

In what follows  $\mathcal{D}([0,T],\mathcal{S}'(\mathbb{R}))$
(resp. $\mathcal{C}([0,T],\mathcal{S}'(\mathbb{R}))$)
 is the space of  c\`adl\`ag (resp. continuous) $\mathcal{S}'(\mathbb{R})$ valued functions endowed with the Skohorod topology. Analogous definitions hold for   $\mathcal{D}([0,T],\mathcal{S}'_{\textrm{\rm Neu}}(\mathbb{R}))$ and  $\mathcal{C}([0,T],\mathcal{S}'_{\textrm{\rm Neu}}(\mathbb{R}))$. 
 
 The rigorous meaning of equations \eqref{eq Ou} and \eqref{eq Ou Neu} is given in terms of the two next propositions.   
 Denote by $T_t:\Sc\to \Sc$ the semi-group of the heat equation in the line (see \cite{fgn3} for instance). It is well known that

\begin{proposition}\label{pp1}
There exists an unique random element $\mc Y_\bola$ taking values in the space $\mc C([0,T],\mathcal{S}'(\bb R))$ such
that:
\begin{itemize}
\item[i)] For every function $H \in \mathcal{S}(\bb R)$, $\mc M_t(H)$ and $\mc N_t(H)$, given by
\begin{equation}\label{lf1}
\begin{split}
&\mc M_t(H)= \mc Y_t(H) -\mc Y_0(H) -  \int_0^t \mc Y_s(\Delta H)ds\,,\\
&\mc N_t(H)=\big(\mc M_t(H)\big)^2 - 2\chi(p) \; t\,\|\nabla H\|_{L^2(\bb R)}^2,
\end{split}
\end{equation}
are $\mc F_t$-martingales, where for each $t\in{[0,T]}$, $\mc F_t:=\sigma(\mc Y_s(H); s\leq t,  H \in \mathcal{S}(\bb R))$.

\item[ii)] $\mc Y_0$ is a Gaussian field of mean zero and covariance given on $G,H\in{\mathcal{S}(\mathbb{R})}$ by
\begin{equation}\label{eq:covar1}
\mathbb{E}_p\big[ \mc Y_0(G) \mc Y_0(H)\big] =  \chi(p)\int_{\mathbb{R}} G(u) H(u) du\,.
\end{equation}
\end{itemize}
Moreover, for each $H\in\mc S(\bb R)$, the stochastic process $\{\Y_t(H)\,:\,t\geq 0\}$ is  Gaussian, being the
distribution of $\Y_t(H)$  conditionally to
$\mc F_s$, for $s<t$, normal  of mean $\Y_s(T_{t-s}H)$ and variance $\int_0^{t-s}\Vert \nabla T_{r}
H\|_{L^2(\bb R)}^2\,dr$.
\end{proposition}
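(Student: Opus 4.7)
The plan is to split the proof into \emph{uniqueness} and \emph{existence}, following the standard Holley--Stroock strategy (as in \cite{HS,kl}) adapted to the full line $\bb R$.

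For \textbf{uniqueness}, I would fix $H \in \Sc$ and $t>0$ and exploit the time-dependent test function $\psi_s = T_{t-s}H$, which satisfies $\partial_s \psi_s + \Delta \psi_s = 0$. A Riemann-sum approximation in the variable $s$, combined with the martingale property of $\mc M_\bullet(H)$ for each fixed $H$, shows that
\begin{equation*}
\widetilde{\mc M}_s \;:=\; \mc Y_s(T_{t-s}H) - \mc Y_0(T_tH)
\end{equation*}
is a continuous martingale with predictable quadratic variation $2\chi(p)\int_0^s \|\nabla T_{t-r}H\|_{L^2(\bb R)}^2\,dr$; both quantities are finite since $\|\nabla T_r H\|_{L^2}\leq \|\nabla H\|_{L^2}$. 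Then the complex exponential martingale
\begin{equation*}
\exp\!\Bigl(i\lambda \widetilde{\mc M}_s + \lambda^2\chi(p)\!\int_0^s \|\nabla T_{t-r}H\|_{L^2(\bb R)}^2\,dr\Bigr)
\end{equation*}
evaluated at $s=t$ and conditioned on $\mc F_0$ yields the conditional characteristic function of $\mc Y_t(H)$; together with the Gaussian law of $\mc Y_0(T_tH)$ specified by (ii), this determines all one-dimensional laws. The same argument applied to finite linear combinations $\sum_j \lambda_j \mc Y_{t_j}(H_j)$, using $\psi_s = \sum_j \lambda_j T_{t_j-s}H_j \mathbf{1}_{\{s\le t_j\}}$ (appropriately pieced together between consecutive $t_j$'s), fixes all finite-dimensional distributions, and in particular shows that each $\mc Y_t(H)$ is Gaussian with the conditional mean $\mc Y_s(T_{t-s}H)$ and variance $\int_0^{t-s}\|\nabla T_r H\|_{L^2(\bb R)}^2\,dr$ announced in the last assertion of the proposition.

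For \textbf{existence}, I would build the process explicitly as a stochastic convolution. Let $\mc Y_0$ be a centered Gaussian field on $\Sc$ with covariance \eqref{eq:covar1}, and independently let $\mc W$ be a space-time white noise on $\bb R_+\times\bb R$. Define
\begin{equation*}
\mc Y_t(H) \;=\; \mc Y_0(T_tH) \;+\; \sqrt{2\chi(p)}\int_0^t\!\!\int_{\bb R} \nabla T_{t-s}H(u)\,d\mc W(s,u).
\end{equation*}
The integral is well defined because $\int_0^t\|\nabla T_{t-s}H\|_{L^2(\bb R)}^2\,ds<\infty$, and gives a Gaussian process. A direct computation, using the semigroup property $T_{t-s}T_{s-r} = T_{t-r}$ and $\nabla T_{t-s}H = T_{t-s}\nabla H$, shows that $\mc M_t(H)$ defined by \eqref{lf1} equals $\sqrt{2\chi(p)}\int_0^t\!\!\int_{\bb R}\nabla H(u)\,d\mc W(s,u) + (\text{terms in } \mc F_0)$ up to an adjustment; more cleanly, one verifies the martingale property of $\mc M_t(H)$ and $\mc N_t(H)$ by conditioning and using the Itô isometry. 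Continuity of $t\mapsto \mc Y_t$ in $\mc S'(\bb R)$ follows from standard Kolmogorov-type estimates on the Gaussian increments combined with the nuclear structure of $\Sc$ and continuity of the semigroup on each semi-norm.

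The main technical obstacle is the first step of the uniqueness argument: justifying the use of the time-dependent test function $T_{t-s}H$ in a martingale problem that is stated only for time-independent $H\in\Sc$. This requires checking that $s\mapsto T_{t-s}H$ lies in $\Sc$ with continuous (indeed smooth) dependence in the Schwartz topology, and that the Riemann sums converge in an appropriate $L^2$-sense; a somewhat delicate point because $\mc Y$ is only assumed continuous with values in $\mc S'(\bb R)$, not in a Hilbert space. Everything else — uniqueness of finite-dimensional laws, Gaussianity, and existence via stochastic convolution — is routine once this step is in place, and the argument is virtually identical to the one carried out in \cite{HS,kl} in the periodic setting.
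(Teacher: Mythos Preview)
The paper does not actually prove this proposition: it introduces it with ``It is well known that'' and cites \cite{HS,kl} for the result. Your proposal is precisely the Holley--Stroock martingale-problem argument those references contain, and both the uniqueness step (via the backward test function $T_{t-s}H$ and the exponential martingale) and the existence step (via the stochastic convolution with space-time white noise) are correctly outlined; so your approach is sound and coincides with what the paper defers to the literature.
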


 We call the  random element $\mc Y_\cdot$ the generalized Ornstein-Uhlenbeck process of  
characteristics $\nabla$ and $\Delta$.  From the second equation in \eqref{lf1} and 
L\'evy's Theorem on the martingale characterization of Brownian motion, the process
\begin{equation}\label{Bmotion}
\big(2\chi(p)\|\nabla H\|_{L^2(\bb R)}^2\big)^{-1/2} \mc M_t(H)
\end{equation}
is a standard Brownian motion. Therefore, in view of  Proposition \ref{pp1}, it makes sense to say that $\Y_\bola$ is
the formal solution of \eqref{eq Ou}.

Now, let $T_t^{\textrm{\rm Neu}}:\SN\to\SN$ be the semi-group associated to the following partial differential equation with Neumann boundary conditions:
\begin{equation}\label{pde2}
\left\{
\begin{array}{ll}
 \partial_t u(t,x) = \; \partial^2_{x} u(t,x), &t \geq 0,\, x \in \mathbb R\backslash\{0\}\\
\p_x u(t,0^+)=\p_x u(t,0^-)=0  &t \geq 0\\
 u(0,x) = \; H(x), &x \in \mathbb R.
\end{array}
\right.
\end{equation}
 See \cite{fgn3} for an explicit expression of $T_t^{\textrm{\rm Neu}}$. In a similar way, we have
\begin{proposition}[See \cite{fgn3}]\label{pp2}
There exists an unique random element $\mc Y_\bola$ taking values in the space $\mc C([0,T],\mathcal{S}'_{\textrm{\rm Neu}}(\bb R))$ such
that:
\begin{itemize}
\item[i)] For every function $H \in \mathcal{S}_{\textrm{\rm Neu}}(\bb R)$, $\mc M_t(H)$ and $\mc N_t(H)$ given by
\begin{equation}\label{lf11}
\begin{split}
&\mc M_t(H)= \mc Y_t(H) -\mc Y_0(H) -  \int_0^t \mc Y_s(\Delta_{\textrm{\rm Neu}} H)ds\,,\\
&\mc N_t(H)=\big(\mc M_t(H)\big)^2 - 2\chi(p) \; t\,\|\nabla_{\textrm{\rm Neu}} H\|_{L^2(\bb R)}^2
\end{split}
\end{equation}
are $\mc F_t$-martingales, where for each $t\in{[0,T]}$, $\mc F_t:=\sigma(\mc Y_s(H); s\leq t,  H \in \mathcal{S}_{\textrm{\rm Neu}}(\bb R))$.

\item[ii)] $\mc Y_0$ is a Gaussian field of mean zero and covariance given on $G,H\in{\mathcal{S}_{\textrm{\rm Neu}}(\mathbb{R})}$ by
\eqref{eq:covar1}.
\end{itemize}
Moreover, for each $H\in\mc S_{\textrm{\rm Neu}}(\bb R)$, the stochastic process $\{\Y_t(H)\,;\,t\geq 0\}$ is  Gaussian, being the
distribution of $\Y_t(H)$  conditionally to
$\mc F_s$, for $s<t$, normal  of mean $\Y_s(T_{t-s}^{\textrm{\rm Neu}}H)$ and variance $\int_0^{t-s}\Vert \nabla T_{r}^{\textrm{\rm Neu}}
H\|_{L^2(\bb R)}^2\,dr$.
\end{proposition}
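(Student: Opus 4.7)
The plan is to follow the Holley--Stroock martingale-problem approach, adapted to the Neumann setup of Definition~\ref{schwartz}; the argument closely parallels that of Proposition~\ref{pp1}, with the only substantive change being the replacement of the heat semigroup on $\Sc$ by the ``blocked-at-the-origin'' semigroup $T_t^{\textrm{\rm Neu}}$ on $\SN$.

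\textbf{Construction (existence).} I would first invoke Kolmogorov's extension theorem applied to the positive-definite form $(G,H)\mapsto \chi(p)\<G,H\>_{L^2(\bb R)}$ on $\SN$ to produce a centered Gaussian field $\mc Y_0$ with covariance \eqref{eq:covar1}. Independently, let $\{\mc W_s\}_{s\geq 0}$ be a cylindrical Brownian motion on $L^2(\bb R)$, and define
\begin{equation*}
\mc Y_t(H)\;:=\;\mc Y_0\!\big(T_t^{\textrm{\rm Neu}} H\big)\;+\;\sqrt{2\chi(p)}\int_0^t \nabla_{\textrm{\rm Neu}}\,T_{t-s}^{\textrm{\rm Neu}} H\,d\mc W_s, \qquad H\in\SN.
\end{equation*}
For this mild formula to make sense one must verify that $T_t^{\textrm{\rm Neu}}$, $\nabla_{\textrm{\rm Neu}}$ and $\Delta_{\textrm{\rm Neu}}$ all stabilize $\SN$; this is precisely where condition~(4) of Definition~\ref{schwartz} enters, since $T_t^{\textrm{\rm Neu}} H$ admits an explicit representation on each half-line as the even reflection of the heat kernel (see \cite{fgn3}), and the vanishing of odd derivatives at $0$ guarantees that the resulting function is smooth and rapidly decaying on each half-line.

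\textbf{Martingale identities.} Applying stochastic Fubini to $\int_0^t \mc Y_s(\Delta_{\textrm{\rm Neu}} H)\,ds$ and using the semigroup identity $T_t^{\textrm{\rm Neu}} H - H=\int_0^t \Delta_{\textrm{\rm Neu}} T_s^{\textrm{\rm Neu}} H\,ds$ produces a cancellation that collapses the defining expression for $\mc M_t(H)$ to the clean form
\begin{equation*}
\mc M_t(H)\;=\;\sqrt{2\chi(p)}\int_0^t \nabla_{\textrm{\rm Neu}} H\,d\mc W_r,
\end{equation*}
i.e., an $\mc F_t$-Brownian motion of variance $2\chi(p)\,t\,\|\nabla_{\textrm{\rm Neu}} H\|_{L^2(\bb R)}^2$. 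Part~(i) then follows from It\^o's isometry; part~(ii) and the explicit conditional law of $\mc Y_t(H)$ given $\mc F_s$ are read off directly from the mild formulation.

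\textbf{Uniqueness.} Given any $\widetilde{\mc Y}_\bola$ satisfying (i) and (ii), I would apply It\^o's formula to $t\mapsto e^{i\lambda \widetilde{\mc Y}_t(T_{T-t}^{\textrm{\rm Neu}} H)}$: the drift arising from $\Delta_{\textrm{\rm Neu}}$ cancels against $\p_t T_{T-t}^{\textrm{\rm Neu}}=-\Delta_{\textrm{\rm Neu}} T_{T-t}^{\textrm{\rm Neu}}$ thanks to the $\mc M_t$-martingale condition, leaving only the quadratic-variation contribution $-\lambda^2 \chi(p)\,\|\nabla_{\textrm{\rm Neu}} T_{T-t}^{\textrm{\rm Neu}} H\|_{L^2(\bb R)}^2\,dt$. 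Taking expectations and invoking (ii) at $t=0$ determines the characteristic function of $\widetilde{\mc Y}_T(H)$; a conditional version of this computation supplies the conditional characteristic function given $\mc F_s$, and iteration then pins down all finite-dimensional distributions of $\widetilde{\mc Y}_\bola$. Path regularity in $\mc C([0,T],\mc S'_{\textrm{\rm Neu}}(\bb R))$ is obtained via Mitoma's criterion together with Kolmogorov--\v{C}entsov applied to the real Gaussian processes $\mc Y_\bola(H)$. \textbf{Main obstacle.} The stochastic analysis is routine; the real difficulty is the semigroup analysis on the non-standard space $\SN$. One must check that $T_t^{\textrm{\rm Neu}}$ preserves $\SN$, that $t\mapsto T_t^{\textrm{\rm Neu}} H$ is continuous in the Fr\'echet topology, and that the integration by parts $\<\Delta_{\textrm{\rm Neu}} G,H\>=-\<\nabla_{\textrm{\rm Neu}} G,\nabla_{\textrm{\rm Neu}} H\>$ holds \emph{without} a boundary contribution at $0$---all of which rely crucially on the vanishing-odd-derivatives condition of Definition~\ref{schwartz} and on the Neumann condition $\p_x u(t,0^\pm)=0$ in \eqref{pde2}. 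These semigroup facts are precisely what is established in \cite{fgn3}.
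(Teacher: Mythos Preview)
The paper does not actually prove this proposition: it is stated with the attribution ``[See \cite{fgn3}]'' and no argument is given in the present paper, the result being imported wholesale from the cited reference. Your sketch follows the standard Holley--Stroock martingale-problem route (mild formulation plus duality for uniqueness), which is indeed the method used in \cite{fgn3} and alluded to in the paper via the reference \cite{HS} preceding Proposition~\ref{pp1}; the outline is correct and you have correctly identified that the only nontrivial ingredient beyond the classical case is the semigroup analysis on $\SN$ (stability of $\SN$ under $T_t^{\textrm{\rm Neu}}$, continuity in the Fr\'echet topology, and the boundary-free integration by parts), all of which is carried out in \cite{fgn3}.
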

 We call the  random element $\mc Y_\cdot$ the generalized Ornstein-Uhlenbeck process of  
characteristics $\nabla_{\textrm{\rm Neu}}$ and $\Delta_{\textrm{\rm Neu}}$.
We are in position to state our result for the fluctuations of the density of particles.
\begin{theorem}[C.L.T. for the density of particles]\label{flu1}

\quad

Consider the Markov process $\{\eta_{t}: t\geq{0}\}$ starting from the invariant state $\nu_p$ under the assumption $g(n)=1+o(1)$.
Then, the sequence of processes $\{\mathcal{Y}_{t}^n\}_{ n\in{\bb N}}$ converges in distribution, as $n\rightarrow{+\infty}$, with respect to the
Skorohod topology
of $\mathcal{D}([0,T],\mathcal{S}'(\bb R))$ to   $\mathcal{Y}_t$
in $\mathcal{C}([0,T],\mathcal{S}'(\bb R))$, the generalized Ornstein-Uhlenbeck process of characteristics $\Delta,\nabla$ which is the formal solution of the equation \eqref{eq Ou}. 
  
  On the other hand, if we consider $g(n)=\alpha n^{-\beta}$, $\alpha>0$ and $\beta>1$,  then  
  $\{\mathcal{Y}_{t}^n\}_{ n\in{\bb N}}$ converges in distribution, as $n\rightarrow{+\infty}$, with respect to the
Skorohod topology
of $\mathcal{D}([0,T],\mathcal{S}_{Neu}'(\bb R))$ to   $\mathcal{Y}_t$
in $\mathcal{C}([0,T],\mathcal{S}_{Neu}'(\bb R))$, the generalized Ornstein-Uhlenbeck process of characteristics $\Delta_{\textrm{\rm Neu}},\nabla_{\textrm{\rm Neu}}$ which is the formal solution of the equation \eqref{eq Ou Neu}.
\end{theorem}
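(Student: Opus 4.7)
The plan is to follow the classical Holley--Stroock / Kipnis--Landim scheme: apply Dynkin's formula to $\mathcal{Y}^n_t(H)$ for each test function $H$ in the appropriate space, prove tightness of $\{\mathcal{Y}^n_\cdot\}_{n\in\mathbb{N}}$ in the Skorokhod space via Mitoma's criterion, and identify every subsequential limit by verifying the martingale characterization of Proposition \ref{pp1} (when $g(n)=1+o(1)$) or Proposition \ref{pp2} (when $g(n)=\alpha n^{-\beta}$, $\beta>1$). The uniqueness asserted in those propositions will then yield convergence in distribution along the whole sequence.

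By standard Markov process theory,
\[
\mathcal{M}^n_t(H) \;=\; \mathcal{Y}^n_t(H) \;-\; \mathcal{Y}^n_0(H) \;-\; \int_0^t n^2\mathfrak{L}_n \mathcal{Y}^n_s(H)\,ds
\]
is an $\mathcal{F}_t$-martingale. Writing the instantaneous current on a bond as $j_{x,x+1}(\eta) = \xi^n_x\eta(x)(1-\eta(x+1))-\xi^n_{x+1}\eta(x+1)(1-\eta(x))$ and performing a discrete summation by parts one obtains
\[
n^2\mathfrak{L}_n\mathcal{Y}^n_s(H) \;=\; \frac{1}{\sqrt n}\sum_{x\in\mathbb{Z}}(\Delta_n H)(x/n)\,(\eta_s(x)-m_p(x)) \;+\; R^n_s(H),
\]
where $\Delta_n$ is the discrete Laplacian and $R^n_s(H)$ gathers the corrections supported on the bonds $(-1,0)$ and $(0,1)$ where the rates differ from $1$. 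The main bulk term converges to $\mathcal{Y}_s(\Delta H)$ in the regime $g(n)=1+o(1)$, and, after restricting the sum to $x\neq 0$, to $\mathcal{Y}_s(\Delta_{\textrm{Neu}}H)$ in the slow-site regime.

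The core difficulty is controlling $R^n_s(H)$. When $g(n)=1+o(1)$, pairing the two bonds adjacent to the origin shows that the nongradient portion of $R^n_s(H)$ appears multiplied by the extra factor $H(1/n)-H(-1/n)=O(n^{-1})$, leaving an overall prefactor of $\sqrt n\,(g(n)-1)$ times a uniformly $L^2(\nu_p)$-bounded gradient-type quantity; a Kipnis--Varadhan bound together with the fact that $g(n)-1\to 0$ then shows that the time integral vanishes in $L^2(\mathbb{P}_p)$. When $g(n)=\alpha n^{-\beta}$ with $\beta>1$, the structure of $\mathcal{S}_{\textrm{Neu}}(\mathbb{R})$ is essential: the requirement in Definition \ref{schwartz} that the odd one-sided derivatives of $H$ vanish at $0^\pm$ makes $H(\pm 1/n)-H(0^\pm)=o(1/n)$, so the would-be boundary contribution $n^2 g(n)\,H(0^\pm)$ times occupation variables at $\{-1,0,1\}$ reorganizes as a true boundary corrector that tends to zero in $L^2(\nu_p)$ by a direct second-moment computation using invariance and the reversible measure \eqref{nub}; the admissibility of jumps of $H$ at $0$ decouples the two half-lines and produces precisely the $\Delta_{\textrm{Neu}}$ drift.

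For the quadratic variation, an explicit computation gives, up to negligible terms, $\langle\mathcal{M}^n(H)\rangle_t = \int_0^t \frac{1}{n}\sum_x \xi^n_x(\nabla_n H(x/n))^2[\eta_s(x)-\eta_s(x+1)]^2\,ds$, and stationarity under $\nu_p$ together with $\chi(p)=p(1-p)$ yields the limit $2\chi(p)\|\nabla H\|^2_{L^2(\mathbb{R})}\,t$ (or its Neumann analogue), matching \eqref{lf1} and \eqref{lf11}. Tightness follows from Mitoma's criterion, which reduces the question to tightness of the real-valued processes $\{\mathcal{Y}^n_\cdot(H)\}_n$ for each fixed $H$, obtained from Aldous' criterion via the above second-moment bounds on the Dynkin integrand and on $\mathcal{M}^n_t(H)$. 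Convergence of $\mathcal{Y}^n_0$ to the Gaussian field with covariance \eqref{eq:covar1} is the standard CLT for independent Bernoullis under $\nu_p$, with the single perturbed site contributing only $O(n^{-1/2})$. Combining these ingredients, any subsequential limit solves the martingale problem of Proposition \ref{pp1} or \ref{pp2}, and the uniqueness there closes the argument.
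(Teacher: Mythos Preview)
Your overall architecture---Dynkin's formula, tightness via Mitoma's and Aldous' criteria, and identification of limit points through the martingale problems of Propositions~\ref{pp1} and~\ref{pp2}---is the same as the paper's and is sound. The gap is in your handling of the boundary remainder $R^n_s(H)$ in the case $g(n)=\alpha n^{-\beta}$, $\beta>1$.

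You argue that the vanishing of the odd one-sided derivatives of $H\in\mathcal{S}_{\textrm{Neu}}(\mathbb{R})$ gives $H(\pm 1/n)-H(0^\pm)=o(1/n)$, and that a ``direct second-moment computation'' then disposes of the boundary terms. But the generator sees only the single value $H(0/n)$, which by the right-continuity convention in Definition~\ref{schwartz} equals $H(0^+)$. Hence $H(-1/n)-H(0/n)\to H(0^-)-H(0^+)$, the \emph{jump} of $H$, which is generically $O(1)$; the corresponding contribution in \eqref{eq433} is then of order $n^{3/2}$ pointwise, and no stationary second-moment bound under $\nu_p$ can kill it. (Your expression ``$n^2 g(n)\,H(0^\pm)$ times occupation variables'' does not match any term in \eqref{eq433}; the actual boundary pieces carry $n^{3/2}$ and involve differences of $H$.) The paper's remedy has two ingredients that your proposal is missing. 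First, it replaces $H(0/n)$ by $H_n(0):=\tfrac12\bigl(H(1/n)+H(-1/n)\bigr)$, which forces the discrete second difference at the origin to vanish identically and eliminates both $\Theta(n,p,H)$ and the $n^{3/2}g(n)$--term in \eqref{eq433}. Second, after this redefinition the surviving boundary pieces regroup \emph{exactly} into $\tfrac12\bigl(H(\mp 1/n)-H(\pm 1/n)\bigr)$ times the instantaneous current
\[
\eta_s(\pm1)\bigl(1-\eta_s(0)\bigr)-g(n)\,\eta_s(0)\bigl(1-\eta_s(\pm1)\bigr)
\]
across the slow bonds. These currents are $O(1)$ pointwise, but a Kipnis--Varadhan estimate (Lemma~\ref{Lemma62})---which uses the detailed-balance identity $\nu_p(\eta^{0,\pm1})=g(n)\,\nu_p(\eta)$ on $\{\eta(0)=1,\eta(\pm1)=0\}$ to collapse the variational expression against the corresponding piece of the Dirichlet form---yields
\[
\mathbb{E}_p\Big[\Big(\int_0^t n^{3/2}\bigl[\eta_s(\pm1)(1-\eta_s(0))-g(n)\eta_s(0)(1-\eta_s(\pm1))\bigr]\,ds\Big)^2\Big]\;=\;O(n^{1-\beta})\;\to\;0.
\]
This current regrouping together with the $H_{-1}$ bound is the mechanism that absorbs the $n^{3/2}$ prefactor; a direct variance computation in $\nu_p$ does not see the cancellation produced by the time integral and cannot close the argument.
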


\section{Hydrodynamics}\label{s3}
  We proceed to define the spatial density of particles of the exclusion process, where we embed the discrete torus $\bb T_n$ in the continuous torus $\bb T$. \medskip

Let $\mc M$ be the space of positive measures on $\bb T$ with total mass bounded by one, endowed with the weak topology. Let
$\pi^{n}_{t} \in \mc M$ be the measure on $\bb T$ obtained by rescaling time by $n^2$, rescaling space by $n^{-1}$, and assigning mass $n^{-1}$ to each particle, i.e.,
\begin{equation}\label{f01}
\pi^{n}_{t}(\eta,du) \;=\; \pfrac{1}{n} \sum _{x\in \bb T_n} \eta_{t} (x)\,
\delta_{x/n}(du)\,,
\end{equation}
where $\delta_u$ is the Dirac measure concentrated on $u$. The usual name for $\pi^n_t(\eta,du)$ is \emph{empirical measure}.
For an integrable function
$H:\bb T \to \bb R$, the expression $\<\pi^n_t, H\>$ stands for
the integral of $H$ with respect to $\pi^n_t$:
\begin{equation*}
\<\pi^n_t, H\> \;=\; \pfrac 1n \sum_{x\in\bb T_n}
H (\pfrac{x}{n})\, \eta_{t}(x)\,.
\end{equation*}
This notation is not to be mistaken with the inner product in $L^2(\bb T)$. Also, when $\pi_t$ has a density
$\rho$, namely when $\pi(t,du) = \rho(t,u) du$, we sometimes write $\<\rho_t, H\>$
for $\<\pi_t, H\>$. \medskip

To avoid unwanted topological issues, in the entire paper a time horizon $T>0$ is fixed.  Let $\mc D([0,T], \mc M)$ be the space of $\mc M$-valued
c\`adl\`ag trajectories $\pi:[0,T]\to\mc M$ endowed with the
\emph{Skorohod} topology.  For each probability measure $\mu_n$ on
$\{0,1\}^{\bb T_n}$, denote by $\bb Q_{\mu_n}^{n}$ the measure on
the path space $\mc D([0,T], \mc M)$ induced by the measure $\mu_n$ and
the process $\pi^n_t$ introduced in \eqref{f01}.

Recall the profile $\gamma : \bb T \to [0,1]$ and the
sequence $\{\mu_n\}_{n\in \bb N}$ of measures on $\{0,1\}^{\bb T_n}$
defined through \eqref{eq288}. Let $\bb Q$ be
the probability measure on the space $\mc D([0,T], \mc M)$ concentrated on the deterministic path $\pi(t,du) = \rho (t,u)du$, where
 \begin{itemize}

\item
if  $g(n)=1+o(1)$, the function $\rho$ is the unique weak solution of \eqref{hep};
\vspace{0.2cm}

\item  if $g(n)=\frac{\alpha}{n^\beta}$, the function $\rho$ is the unique weak solution of  \eqref{hen}.
\end{itemize}


\begin{proposition}\label{prop:4.1}
Considering the two possibilities above for the function $g$, the sequence of probability measures $\{\bb
Q_{\mu_n}^{n}\}_{n\in\bb N}$ converges weakly to $\bb Q$  as $n\to\infty$.
\end{proposition}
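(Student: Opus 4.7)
The plan is to follow the standard entropy/martingale method for hydrodynamic limits of exclusion processes (cf.\ \cite{kl}), split into three steps: tightness of $\{\bb Q_{\mu_n}^n\}$, characterization of the limit points as concentrated on weak solutions of the relevant PDE, and uniqueness of these weak solutions. For tightness in $\mc D([0,T],\mc M)$, since $\mc M$ is compact I would reduce to tightness of the real-valued processes $t\mapsto \<\pi^n_t,H\>$ for $H$ in a dense subset of test functions, and verify Aldous' criterion using the Dynkin martingale decomposition together with the bound $|\mc L_n F|\lesssim 1$ for $F=\<\pi^n_\cdot,H\>$ (which gives quadratic variation of order $1/n$ uniformly in time, thanks to the factor $1/n^2$ built into $\pi^n$).

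The heart of the argument is the computation of $n^2\mf L_n \<\pi^n_s,H\>$ via summation by parts. Writing the instantaneous current $j_{x,x+1}(\eta)=\xi_x^n\eta(x)(1-\eta(x+1))-\xi_{x+1}^n\eta(x+1)(1-\eta(x))$, bulk bonds give $j_{x,x+1}=\eta(x)-\eta(x+1)$, which produces the discrete Laplacian of $H$ acting on $\pi^n_s$ and converges to $\<\pi^n_s,\partial_u^2 H\>$. The bonds $\{-1,0\}$ and $\{0,1\}$, however, generate extra contributions of the form $(g(n)-1)\,\eta_s(0)(1-\eta_s(\pm 1))$, multiplied by the discrete-gradient factor $n(H(\pm 1/n)-H(0))\simeq H'(0^\pm)$. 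In the regime $g(n)=1+o(1)$ these correction terms are uniformly $o(1)$, so every limit point is concentrated on continuous trajectories $\pi_t(du)=\rho(t,u)du$ satisfying the weak formulation of the periodic heat equation \eqref{hep}, and uniqueness of the weak solution identifies the limit as $\bb Q$. The initial condition $\rho_0=\gamma$ follows from a law of large numbers applied to the Bernoulli product measure $\mu_n$.

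In the regime $g(n)=\alpha n^{-\beta}$, $\beta>1$, the slow site correction no longer vanishes by size alone, so I would enlarge the class of test functions to $\A$ and exploit the fact that $g(n)\to 0$ to show that the current through the slow site is asymptotically negligible: the expected time-integrated flow across $\{-1,0\}$ and $\{0,1\}$ is $O(n\cdot g(n))=o(1)$ in diffusive scale, which effectively disconnects the torus at the origin. First I would identify the bulk evolution on $\bb T\setminus\{0\}$ using test functions in $\A$ vanishing near $0$, then promote it to the full weak formulation with Neumann conditions by choosing $H\in\A$ with prescribed one-sided derivatives and using that the boundary contribution $n[H(0^+)-H(-1/n)]\eta_s(0)(1-\eta_s(-1))$ is cancelled (in expectation, up to vanishing error) by the vanishing current. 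To apply this weak formulation rigorously one also needs an energy estimate showing that the limiting density belongs to $L^2(0,T;\mc H^1)$, in the spirit of \cite{fgn2}. The main obstacle is precisely this step: the non-translation-invariance of the reversible measures $\nu_p$ and the lack of self-duality prevent a direct transcription of the slow-bond arguments of \cite{fgn1,fgn2,fl}, forcing one to rely on attractiveness-based couplings (as announced in Subsection \ref{LGN}) and on replacement lemmas tailored to the local inhomogeneity at the origin in order to close the estimates on the cross terms $\eta_s(0)\eta_s(\pm 1)$ arising at the slow site.
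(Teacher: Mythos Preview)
Your overall architecture---tightness via the Dynkin martingale decomposition, characterization of limit points, uniqueness of weak solutions---matches the paper, and the treatment of $g(n)=1+o(1)$ is essentially correct: for $H\in C^2(\bb T)$ the boundary correction in \eqref{integralterm} is $n(1-g(n))\cdot O(1/n)=o(1)$, so the limit is the periodic heat equation.

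For $g(n)=\alpha n^{-\beta}$, however, your central claim that ``the expected time-integrated flow across $\{-1,0\}$ and $\{0,1\}$ is $O(n\cdot g(n))$'' is wrong as stated, and this is not a technicality. Only the \emph{outgoing} jumps from the origin carry rate $g(n)$; the \emph{incoming} jumps from $\pm 1$ into $0$ have rate $1$. After diffusive rescaling the incoming contribution is $n\,\eta_s(\pm 1)(1-\eta_s(0))\,[H(0)-H(\pm 1/n)]$, which for test functions $H\in\A$ discontinuous at $0$ is a priori of order $n$, not $n g(n)$. The specific boundary term you display, $\eta_s(0)(1-\eta_s(-1))$, is the harmless outgoing one; you have overlooked its incoming counterpart, and the bound you give does not hold for it.

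The paper closes exactly this gap via Proposition~\ref{prop:3.2}: by attractiveness one dominates $\mu_n$ from below by the reversible measure $\nu_p$, for which $\nu_p\{\eta(0)=1\}=1-O(g(n))$, and deduces that $\int_0^t n(1-\eta_s(0))\,ds\to 0$ in probability. Thus the origin is occupied except on a time set of density $o(1/n)$, which simultaneously kills the incoming current and allows one to replace the cross term $\eta_s(0)\eta_s(\pm 1)$ in \eqref{integralterm} by $\eta_s(\pm 1)$. Only after this substitution do the $O(n)$ pieces coming from the discrete Laplacian at $x=\pm 1$ and from the third line of \eqref{integralterm} cancel, leaving the boundary gradients $\partial_u H(0^+)\eta_s(1)-\partial_u H(0^-)\eta_s(-1)$; the replacement lemma (Proposition~\ref{Replacement}) then converts $\eta_s(\pm 1)$ into box averages and hence into $\rho_s(0^\pm)$. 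You do invoke attractiveness and the cross terms in your final sentence, so you have located the right tools, but the mechanism you propose (``small outgoing rate $\Rightarrow$ small current'') is not the one that works: the correct mechanism is ``origin almost always occupied $\Rightarrow$ no current in either direction''.
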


Since Theorem \ref{th:1} is  an immediate corollary of the previous proposition, our goal  is to prove Proposition \ref{prop:4.1}.

The proof  is divided in several parts. In Subsection \ref{sub4.1},
we show that the sequence $\{\bb Q_{\mu_n}^{n}\}_{n\in \bb N}$ is tight. In Sections \ref{sub3.4}, \ref{sub3.5},  we show that, for each case of $g$, $\bb Q$  is the only possible limit along subsequences of  $\{\bb Q_{\mu_n}^{n}\}_{n\in \bb N}$. This  assures that the sequence
$\{\bb Q_{\mu_n}^{n}\}_{n\in\bb N}$ converges weakly to $\bb Q$, as $n\to\infty$.

\subsection{Tightness} \label{sub4.1}

In order to prove
tightness of $\{\pi^{n}_t : 0\le t \le T\}_{n\in \bb N}$ it is enough to
show tightness of the real-valued processes $\{\<\pi^{n}_t ,H\> :
0\le t \le T\}_{n\in \bb N}$ for a set of functions $H\in C(\bb{T})$, provided this set of functions is  dense in $C(\bb T)$ with respect to the uniform topology (see \cite[page 54, Proposition 1.7]{kl}). For that purpose, let $H\in C^2(\bb T)$. By Dynkin's formula,
\begin{equation}\label{M}
M^{n}_{t}(H)\;:=\;\<\pi^{n}_{t}, H\>- \<\pi^{n}_{0}, H\>-\int_{0}^{t}n^2\mf L_{n}\<\pi^{n}_{s},H\>\,ds
\end{equation}
is a martingale with respect to the natural filtration $\mathcal{F}_t:=\sigma(\eta_s: s\leq{t})$. Moreover, 
\begin{equation}\label{quadratic}
\big(M^{n}_{t}(H)\big)^2-\int_{0}^{t}\Big(n^2\mf L_{n} [\<\pi^{n}_{s},H\>]^2- 2\<\pi^{n}_{s},H\>\,n^2\mf
L_{n}\<\pi^{n}_{s},H\>\Big)\,ds
\end{equation}
is also a martingale with respect to the same filtration, see \cite{kl}.  In order to prove tightness of $\{\pi^{n}_t(H) : 0\le t \le T\}_{n\in\bb N}$, we shall prove
tightness
of each term in the formula above and then we invoke the fact that a sequence of a finite sum of tight processes is again tight.

Since \eqref{quadratic} is a martingale, doing  elementary calculations we obtain  the quadratic variation of $M^{n}_{t}(H)$ at time $T$ as
\begin{equation}\label{quad2}
\begin{split}
&\<M^{n}(H)\>_T= \int_{0}^{T} \sum_{x\in\bb T_{n}\backslash\{0\}} \Big((\eta_{s}(x)-\eta_{s}(x+1))
(H(\pfrac{x+1}{n})-H(\pfrac{x}{n}))\Big)^2ds\\
	& +\int_{0}^{T} \Big(\eta_{s}(1)(1-\eta_{s}(0))+g(n)\eta_{s}(0)(1-\eta_{s}(1))\Big)
(H(\pfrac{1}{n})-H(\pfrac{0}{n}))^2ds\\
	& +\int_{0}^{T} \Big(\eta_{s}(-1)(1-\eta_{s}(0))+g(n)\eta_{s}(0)(1-\eta_{s}(-1))
\Big)(H(\pfrac{-1}{n})-H(\pfrac{0}{n}))^2ds\,.\\
\end{split}
\end{equation}
The smoothness of $H$ implies that $\lim_{n\to\infty}\bb E_{\mu_n}\big[\<M^{n}(H)\>_T\big]=0$. Hence $M^{n}_{T}(H)$ converges
to zero  in $L^2(\mathbb{P}_{\mu_n})$ as  $n\to\infty$ and,  by Doob's inequality, for every $\delta>0$,
\begin{equation}\label{limprob}
\lim_{n\rightarrow\infty}\bb P_{\mu_n}\left\{\,\eta_\bola\,:\,\sup_{0\leq t\leq T} |M^{n}_{t}(H)|>\delta\right\}\;=\;0\,.
\end{equation}
In particular, this yields tightness of the sequence of martingales $\{M^{n}_{t}(H): 0\le t \le T\}_{n\in\bb N}$.

 A long computation, albeit completely elementary, shows us that the term $n^{2}\mf L_{n}\<\pi^{n}_{s},H\>$ appearing inside the time integral in (\ref{M}) can be rewritten as
\begin{equation}\label{integralterm}
\begin{split}
&n\!\!\!\!\!\!\sum_{x\in\bb T_n\backslash\{0\}}\!\!\!\!\!\!\!\!\Big(H(\pfrac{x+1}{n})+H(\pfrac{x-1}{n})-2H(\pfrac{x}{n})\Big)\eta_{s}(x)\\
+&ng(n)\Big(H(\pfrac{1}{n})+H(\pfrac{-1}{n})-2H(\pfrac{0}{n})\Big)\eta_{s}(0)\\
+&n(1-g(n))\Big[(H(\pfrac{-1}{n})-H(\pfrac{0}{n}))\,\eta_{s}(0)\,\eta_{s}(-1)+
(H(\pfrac{1}{n})-H(\pfrac{0}{n}))\,\eta_{s}(0)\,\eta_{s}(1)\Big]\,.\\
\end{split}
\end{equation}
We note that the first term above  corresponds to the discrete Laplacian leading to the heat
equation, while the other two terms arise from the boundary conditions.

By the smoothness of $H$ again, there exists a constant $c_H>0$ such that
$|n^2\mf L_{n}\<\pi^{n}_{s},H\>|\leq c_H$, which in turn gives
\begin{equation*}
 \left|\int_{r}^{t}n^2\mf L_{n}\<\pi^{n}_{s},H\>ds\right|\;\leq\; c_H\,|t-r|\,.
\end{equation*}
By the Arzel\`a-Ascoli Theorem the sequence of these integral terms is a relatively compact set, with respect to the  uniform topology, therefore it is tight. The term  $\<\pi^{n}_{0}, H\>$ is constant in time and bounded, thus is tight as well. This concludes the proof that the set of measures $\{\bb Q_{\mu_n}^{n}\}_{n\in \bb N}$ is tight.

\subsection{Entropy}\label{subinv}
Denote by ${ \bf H} (\mu | \nu_p)$ the entropy of a probability
measure $\mu$ with respect to the invariant state $\nu_p$. For a precise definition and properties of the entropy, we refer the reader to \cite{kl}.

\begin{proposition}\label{prop:4.2}
There exists a finite constant $K_0:=K_0(p)$, such that
\begin{equation*}
{\bf H} (\mu | \nu_p) \;\le\; K_0\, n\,,
\end{equation*}
for any probability measure $\mu$ on ${\{0,1\}^{\mathbb{T}_n}}$.
\end{proposition}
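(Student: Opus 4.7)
The plan is to use the elementary entropy bound valid on any finite state space: for any probability measure $\mu$ on a finite set $\Omega$ and any strictly positive probability measure $\nu$ on $\Omega$,
\begin{equation*}
{\bf H}(\mu|\nu) \;=\; \sum_{\eta} \mu(\eta)\log\frac{\mu(\eta)}{\nu(\eta)}
\;\leq\; \log\frac{1}{\min_{\eta}\nu(\eta)},
\end{equation*}
which follows from $\sum_\eta \mu(\eta)\log \mu(\eta)\leq 0$ together with $-\sum_\eta \mu(\eta)\log \nu(\eta) \leq -\log\min_\eta \nu(\eta)$. So the problem reduces to lower bounding $\min_\eta \nu_p(\eta)$.

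Next, because $\nu_p$ is the product of Bernoulli marginals with parameters $m_p(x)$ given by \eqref{nub}, one has
\begin{equation*}
\min_{\eta\in\{0,1\}^{\mathbb{T}_n}} \nu_p(\eta) \;=\; \min\{m_p(0),\,1-m_p(0)\}\cdot\min\{p,\,1-p\}^{n-1}.
\end{equation*}
Taking logarithms yields
\begin{equation*}
{\bf H}(\mu|\nu_p) \;\leq\; -\log\min\{m_p(0),1-m_p(0)\} \;+\; (n-1)\,\big(-\log\min\{p,1-p\}\big).
\end{equation*}
Since $p\in(0,1)$ is fixed, the second term is at most $c(p)\,n$ for a constant $c(p)$ depending only on $p$.

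The remaining task is to verify that the contribution from the origin marginal is also $O(n)$. When $g(n)=1+o(1)$, $m_p(0)\to p$ and this contribution is bounded uniformly in $n$. When $g(n)=\alpha n^{-\beta}$, one computes
\begin{equation*}
1-m_p(0) \;=\; \frac{(1-p)\,g(n)}{(1-p)\,g(n)+p} \;\sim\; \frac{(1-p)\alpha}{p}\,n^{-\beta}
\end{equation*}
as $n\to\infty$, so that $-\log(1-m_p(0))$ is of order $\log n$, which is dominated by $n$. Absorbing this into a slightly larger constant gives the claimed bound ${\bf H}(\mu|\nu_p)\leq K_0\,n$ for all sufficiently large $n$, and one can enlarge $K_0$ further (still depending only on $p$ and the fixed parameters $\alpha,\beta$) to handle small $n$.

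I do not expect a real obstacle: the only point requiring attention is that, in the slow-site regime $g(n)\to 0$, the measure $\nu_p$ concentrates on $\eta(0)=1$ so that the marginal at the origin contributes a term that grows with $n$; but the growth is only logarithmic, hence harmless compared with the linear budget $K_0\,n$.
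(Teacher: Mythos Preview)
Your argument is correct and is essentially the same as the paper's: both bound $\mathbf{H}(\mu|\nu_p)$ by $-\log\min_\eta \nu_p(\eta)$ and then use the product structure to get $\nu_p(\eta)\geq (p\wedge(1-p))^{n-1}\cdot(m_p(0)\wedge(1-m_p(0)))$. Your extra case analysis on the origin marginal (noting that its contribution is $O(\log n)$ in the slow-site regime) is a welcome clarification that the paper glosses over when it asserts $K_0$ depends only on $p$; as you correctly observe, strictly speaking $K_0$ also depends on the fixed parameters $\alpha,\beta$.
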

\begin{proof}
Recall the definition of $\nu_p$ and notice that
\begin{equation*}
\begin{split}
\mathbf{H} (\mu | \nu_p)&\;=\; \sum_{\eta\in\{0,1\}^{\bb T_n}}\mu(\eta)\log\Big(\frac{\mu(\eta)}{\nu_p(\eta)}\Big)
\;\leq\; \sum_{\eta\in\{0,1\}^{\bb T_n}}\mu(\eta)\log\Big(\frac{1}{\nu_p(\eta)}\Big)\,.\\
\end{split}
\end{equation*}
Recall \eqref{nub}. By the assumption $0<p<1$ and the inequality
\begin{equation*}
\nu_p(\eta)\geq (p\wedge (1-p))^{n-1}\,( m_p(0)\wedge(1-m_p(0)))\,
\end{equation*}
 we conclude that ${\bf H} (\mu | \nu_p)\leq K_0\,n$ for  some $K_0>0$ depending only on $p$.
\end{proof}

A remark: In  particular, the estimate ${\bf H} (\mu_n | \nu_p) \leq K_0\,n$ holds for the measures $\mu_n$  defined in \eqref{eq288}.

\subsection{Dirichlet form}\label{sub33}
Let $f$ be any density with respect to the invariant measure $\nu_p$. In others words,
 $f$ is a non negative function $f:\{0,1\}^{\bb T_n}\to \bb R$ satisfying $\int f(\eta)\nu_p(d\eta)=1$.
The Dirichlet form $\mf D_n$ is the convex and lower semicontinuous functional defined through
\begin{equation*}
\mf D_n(\sqrt{f}) =-\int \sqrt{f(\eta)}\,\mf L_n \sqrt{f(\eta)}\, \nu_p(d\eta)\,.
\end{equation*}
Invoking a general result \cite[Appendix 1, Prop. 10.1]{kl} we can write $\mf D_n$ as
\begin{equation}\label{Dirichlet}
\begin{split}
\mf D_n (\sqrt{f})\;=\;
&\frac{g(n)}{2}\int\eta(0)(1-\eta(1))\Big\{\sqrt{f(\eta^{0,1})}-\sqrt{f(\eta)}\Big\}^2\nu_p(d\eta)\\
+&\frac{1}{2}\int\eta(1)(1-\eta(0))\Big\{\sqrt{f(\eta^{0,1})}-\sqrt{f(\eta)}\Big\}^2\nu_p(d\eta)\\
+&\frac{1}{2}\int\eta(-1)(1-\eta(0))\Big\{\sqrt{f(\eta^{0,-1})}-\sqrt{f(\eta)}\Big\}^2\nu_p(d\eta)\\
+&\frac{g(n)}{2}\int\eta(0)(1-\eta(-1))\Big\{\sqrt{f(\eta^{0,-1})}-\sqrt{f(\eta)}\Big\}^2\nu_p(d\eta)\\
 +& \frac{1}{2}\sum_{\at{x\in \bb T_n}{x\neq 0,-1}}
\int\Big\{\sqrt{f(\eta^{x,x+1})}-\sqrt{f(\eta)}\Big\}^2\nu_p(d\eta)\,,\\
\end{split}
\end{equation}
where $\eta^{x,x+1}$ has been defined in \eqref{eta}.
\begin{proposition}\label{Prop3.3}
 Let $\bb Q^*$ be a limit of a subsequence of the sequence of probabilities measures $\{\bb Q^{n}_{\mu_n}\}_{n\in \bb N}$. Then $\bb
Q^*$ is concentrated  on trajectories $\pi(t,du)$ with a density with respect to the Lebesgue measure, i.e., of the
form $\pi(t,du) = \rho(t,u) du$. Moreover, the density $\rho(t,u)$ belongs to the space $L^2(0,T;\mc H^1)$, see Definition \ref{Sobolevdefinition}.
\end{proposition}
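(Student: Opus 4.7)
The plan is to split the proposition into two claims: that $\bb Q^*$ concentrates on trajectories absolutely continuous with respect to Lebesgue measure, and that the resulting density lies in $L^2(0,T;\mc H^1)$. For the absolute continuity, I would use the exclusion constraint: since $\eta_t(x) \in \{0,1\}$, for every interval $[a,b] \subset \bb T$ one has $\pi^n_t([a,b]) \leq (b-a) + 2/n$. Passing to the limit along the subsequence converging to $\bb Q^*$ yields, $\bb Q^*$-almost surely, $\pi_t(du) \leq du$, hence $\pi_t(du) = \rho(t,u)\,du$ with $\rho(t,u) \in [0,1]$ for almost every $(t,u)$.

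For the Sobolev regularity, I would implement the standard energy estimate. Given $H \in C^\infty_c([0,T]\times (\bb T\setminus\{0\}))$, summation by parts yields
\begin{equation*}
\int_0^T \<\pi^n_s, \partial_u H(s,\cdot)\>\, ds = -\int_0^T \sum_{x \in \bb T_n} H(s,x/n)\,[\eta_s(x+1)-\eta_s(x)]\, ds + o_n(1),
\end{equation*}
with no boundary contribution since $H$ vanishes near the origin. The target estimate is that for a countable dense family of admissible $H$'s and some $\kappa, C > 0$,
\begin{equation*}
\bb E_{\mu_n}\!\left[\int_0^T\sum_x H(s,x/n)\bigl[\eta_s(x)-\eta_s(x+1)\bigr]\,ds - \kappa\int_0^T\frac{1}{n}\sum_x H^2(s,x/n)\,ds\right] \leq C,
\end{equation*}
uniformly in $n$. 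By the entropy inequality with parameter $n$ and Proposition \ref{prop:4.2}, this reduces to bounding $\frac{1}{n}\log \bb E_{\nu_p}[e^{nV_H}]$ where $V_H$ is the functional in brackets; via Feynman-Kac, that is in turn controlled by the top eigenvalue of $n^2\mf L_n$ perturbed by $V_H$, and hence by the variational formula $\sup_f\{\<\phi,f\>_{\nu_p} - n^2 \mf D_n(\sqrt{f})\}$. The identity $\eta(x)-\eta(x+1)=\eta(x)(1-\eta(x+1))-\eta(x+1)(1-\eta(x))$, combined with the change of variables $\eta \mapsto \eta^{x,x+1}$ (which preserves $\nu_p$ for $x \neq 0,-1$) and Young's inequality $2uv \leq \epsilon^{-1}u^2 + \epsilon v^2$, then absorbs the perturbation into the bulk Dirichlet terms of \eqref{Dirichlet}. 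Taking a supremum over $H$'s in a countable dense class, together with a Riesz duality argument in $L^2$, delivers $\rho \in L^2(0,T;\mc H^1)$.

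The main obstacle is the slow site itself. Because $\nu_p$ is not translation-invariant and $g(n)$ may be arbitrarily small, the Dirichlet terms associated with the bonds $\{-1,0\}$ and $\{0,1\}$ degenerate and cannot be used to control a gradient at the origin. The resolution is precisely the restriction to test functions $H$ whose support lies in $\bb T\setminus\{0\}$, which coincides with the testing class defining $\mc H^1$ in Definition \ref{Sobolevdefinition}. Under this restriction only the homogeneous bulk terms (with $x \neq 0, -1$) of \eqref{Dirichlet} enter the estimate, $g(n)$ drops out entirely, and the argument becomes uniform across both regimes for $g(n)$ considered in the paper.
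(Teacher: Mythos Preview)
Your proposal is correct and follows essentially the same route the paper indicates: the paper omits the proof and refers to \cite[Proposition 5.6 and Lemma 5.8]{fgn1}, singling out precisely the two ingredients you use---the linear entropy bound of Proposition \ref{prop:4.2} and the fact that away from the defect the Dirichlet form coincides with that of the homogeneous SSEP, so that restricting test functions to $\bb T\setminus\{0\}$ (exactly the class in Definition \ref{Sobolevdefinition}) lets the energy estimate run on bulk bonds only, uniformly in $g(n)$. Your write-up simply unpacks what \cite{fgn1} does; there is no methodological difference.
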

 The proof of the proposition above can be adapted from \cite[Proposition 5.6]{fgn1} and for this reason it will be omitted. For the interested reader, we briefly indicate some steps of this adaptation.

  We begin by observing that \cite[Proposition 5.6]{fgn1} is in fact a consequence of \cite[Lemma 5.8]{fgn1}. Thus we just describe how to prove, in our case, the statement in \cite[Lemma 5.8]{fgn1}.

 There are two basic ingredients in the proof of \cite[Lemma 5.8]{fgn1}.  The first one is that the entropy (with respect to the invariant measure) of any probability measure on the state space of the process, namely, $\{0,1\}^{\bb T_n}$   does not grow more than linearly. In our case, this result is proved in Proposition \ref{prop:4.2}.

The second ingredient in the proof of \cite[Lemma 5.8]{fgn1} is the fact  that, except at the defect,  the  Dirichlet form of the considered process coincides with the Dirichlet form of the homogeneous exclusion process. This fact indeed holds  for  the exclusion process with a  slow site  and therefore the same proof of  \cite[Lemma 5.8]{fgn1} applies here.

\subsection{Hydrodynamic limit for  $g(n)=1+o(1)$}\label{sub3.4}

\begin{proof}[Proof of Proposition \ref{prop:4.1}  for   $g(n)=1+o(1)$.]
Let $\bb Q^*$ be the weak limit
of some convergent subsequence  $\{\bb Q^{n_j}_{\mu_{n_j}}\}_{j\in \bb N}$ of the sequence  $\{\bb Q^{n}_{\mu_n}\}_{n\in \bb N}$. In order not to overburden the notation,  denote this subsequence just by $\{\bb Q^{n}_{\mu_{n}}\}_{n\in \bb N}$.
By Proposition \ref{Prop3.3},  the probability measure $\bb Q^*$ is concentrated on
trajectories
$\pi(t,du) =\rho(t,u) du$ such that $\rho(t,u)\in L^2(0,T;\mc H^1)$.
Our goal is to conclude that $\rho$  is a weak solution of the partial differential
equation \eqref{hep}.

Let $H\in C^2(\bb T)$.
We  claim  that
\begin{equation}\label{Q1}
 \bb Q^{*}\Big\{\,\pi:\,
  \<\pi_t, H \> \,-\,
 \<\pi_0,H \> \,-\,
 \int_0^t  \,  \<\pi_s , \partial_u^2 H \> \,ds \,=\,0,\,\forall t\in[0,T]\, \Big\}\;=\;1\,.
  \end{equation}
 To prove this, it suffices to show that
\begin{equation*}
\bb Q^{*} \Big\{\,\pi: \, \sup_{0\le t\le T}
\Big\vert \<\pi_t, H \> \,-\,
\<\pi_0,H \> \,-\,
\int_0^t  \,  \<\pi_s , \partial_u^2 H \> \,ds\, \Big\vert
 \, > \, \delta\, \Big\}\,=0\,,
 \end{equation*}
 for every $\delta >0$. Since the supremum is a continuous function in the Skorohod metric, by Portmanteau's Theorem, the probability above is smaller or equal  than
 \begin{equation*}
 \liminf_{n\to\infty} \bb Q^{n}_{\mu_n} \Big\{\,\pi: \, \sup_{0\le t\le T}
\Big\vert \<\pi_t, H \> \,-\,
\<\pi_0,H \> \,-\,
\int_0^t  \,  \<\pi_s ,\partial_u^2 H \> \,ds \,\Big\vert
 \, > \, \delta\,\Big\}\,.
\end{equation*}
Since $\bb Q^{n}_{\mu_n}$  is the measure on the space $\mc D([0,T], \mc M)$ induced by $\bb P_{\mu_n}^{n}$ via the empirical measure, we can rewrite the expression above as
\begin{equation*}
 \liminf_{n\to\infty} \bb P_{\mu_n} \Big\{\,\eta_\bola\,:\, \sup_{0\le t\le T}
\Big\vert \<\pi_t^n, H \> \,-\,
\<\pi_0^n,H \> \,-\,
\int_0^t  \,  \<\pi_s^n ,\partial_u^2 H \> \,ds \,\Big\vert
 \, > \, \delta\,\Big\}\,.
\end{equation*}

Adding and subtracting $n^2\,\mf L_n\<\pi^n_s, H\>$ to the integral term above, we can see that the previous expression is bounded from above by the sum of
\begin{equation}\label{eq413}
\begin{split}
&\limsup_{n\to\infty} \bb P_{\mu_n} \Big\{ \, \sup_{0\le t\le T}
\Big\vert \<\pi_t^n, H \> \,-\,
\<\pi_0^n,H \> \,-\,
\int_0^t  \,  n^2\,\mf L_n\<\pi^n_s, H\> \,ds\, \Big\vert
 \, > \, \delta/2\, \Big\}\\
 \end{split}
 \end{equation}
 and
 \begin{equation}\label{eq414}
\begin{split}
 & \limsup_{n\to\infty} \bb P_{\mu_n} \Big\{\, \sup_{0\le t\le T} \Big\vert \int_0^t
 \big(n^2\,\mf L_n\<\pi^n_s, H\> -  \<\pi_s^n ,\partial_u^2 H \>\big)\,ds \,\Big\vert
 \, > \, \delta/2 \,\Big\}\,.
\end{split}
\end{equation}
As already verified in Subsection \ref{sub4.1}, the quadratic variation of the martingale  $M^{n}_{t}(H)$ given in \eqref{M} goes to zero, as $n\to\infty$. Therefore, by Doob's inequality,  expression \eqref{eq413} is null.

It remains to show that \eqref{eq414} also vanishes. Recall \eqref{integralterm} for $n^2\,\mf L_n\<\pi^n_s, H\>$. Let us examine its terms.

The first term in the sum \eqref{integralterm} is
\begin{equation*}
n\sum_{x\in\bb T_n\backslash\{0\}}\Big(H(\pfrac{x+1}{n})+H(\pfrac{x-1}{n})-2H(\pfrac{x}{n})\Big)\eta_{s}(x)\,,
\end{equation*}
from which one can subtract
\begin{equation*}
\<\pi_s^n ,\partial_u^2 H \> \;=\;\pfrac{1}{n}\sum_{x\in \bb T_n}  \p_u^2 H(\pfrac{x}{n})\,\eta_s(x),
\end{equation*}
this difference being bounded (in modulus) by $c_H/n$, again because $H\in C^2(\bb T)$, where  $c_H>0$ is a constant depending only on $H$.

The second term in  \eqref{integralterm} is
\begin{equation*}
n \Big(1+o(1)\Big)
\Big(H(\pfrac{1}{n})+H(\pfrac{-1}{n})-2H(\pfrac{0}{n})\Big)\eta_{s}(0),
\end{equation*}
which converges to zero as $n\to\infty$, because $H\in C^2(\bb T)$.

The last term in \eqref{integralterm} is
\begin{equation*}
n(1-g(n))\Big[(H(\pfrac{-1}{n})-H(\pfrac{0}{n}))\,\eta_{s}(0)\,\eta_{s}(-1)+
(H(\pfrac{1}{n})-H(\pfrac{0}{n}))\,\eta_{s}(0)\,\eta_{s}(1)\Big]\,,
\end{equation*}
which goes to zero, as $n$ goes to infinity, because $H$ is smooth and $g(n)=1+o(1)$.
By the facts above we conclude that \eqref{eq414} is zero, proving the claim.

Now, let $\{H_i\}_{i\geq{1}}$ be a countable dense set of functions in $C^2(\bb T)$, with respect to the norm $\|H\|_{\infty}+\|\partial_uH\|_\infty+\|\partial_u^2H\|_\infty$. Intersecting a countable number of sets of probability one,
 \eqref{Q1} can be extended for
all functions $H\in C^2(\bb T)$ simultaneously, proving that $\bb Q^*$ is concentrated on weak solutions of \eqref{hep}. Since there  exists only one weak solution of \eqref{hep}, it means that $\bb Q^*$ is equal to the
aforementioned probability measure $\bb Q$. Invoking tightness  proved in Subsection \ref{sub4.1}, we conclude that the entire sequence $\{\bb Q^{n}_{\mu_n}\}_{n\in \bb N}$ converges to $\bb Q$ as $n\to\infty$.
\end{proof}

 \subsection{Law of large numbers for the occupation at the origin}\label{LGN}
  Recall the definition of $\mu_n$ given in \eqref{eq288} .
\begin{proposition}\label{prop:3.2}
Consider $g(n)=\alpha n^{-\beta}$. Let $\gamma:\bb T\to \bb [0,1]$ be a continuous profile, except possibly at $x=0$  and satisfying \eqref{assumption}. Then, for all $t>0$ and   $\eps>0$,
\begin{equation}\label{eq61}
\lim_{n\to \infty} \bb P_{\mu_n}\Big\{\,\eta_\bola\,:\,  \frac{n}{t}\int_{0}^{t}(1-\eta_{s}(0))\,ds  \,>\,\eps\, \Big\}\;=\;
0\,.
\end{equation}
\end{proposition}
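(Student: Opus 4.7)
The plan is to establish the expectation bound
\[
n\,\bb E_{\mu_n}\!\left[\int_0^t(1-\eta_\tau(0))\,d\tau\right]\;\xrightarrow[n\to\infty]{}\;0,
\]
from which \eqref{eq61} follows by Markov's inequality. The starting point is Dynkin's formula applied to $f(\eta)=\eta(0)$. A direct computation gives
\[
n^2\mf L_n\eta(0)\;=\;n^2(1-\eta(0))[\eta(-1)+\eta(1)]\;-\;n^2g(n)\,\eta(0)[2-\eta(-1)-\eta(1)],
\]
so that $M_t:=\eta_t(0)-\eta_0(0)-\int_0^t n^2\mf L_n\eta_\tau(0)\,d\tau$ is a mean-zero martingale. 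Taking expectations under $\bb P_{\mu_n}$, using $|\eta_t(0)-\eta_0(0)|\le 1$ together with $n^2g(n)=\alpha n^{2-\beta}$, yields the key a priori bound
\[
n\,\bb E_{\mu_n}\!\left[\int_0^t(1-\eta_\tau(0))[\eta_\tau(-1)+\eta_\tau(1)]\,d\tau\right]\;\le\;\frac{1}{n}\;+\;2\alpha t\,n^{1-\beta}\;\xrightarrow[n\to\infty]{}\;0,
\]
since $\beta>1$.

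It remains to dispense with the weight $\eta_\tau(-1)+\eta_\tau(1)$. For this I would use the elementary pointwise inequality (verified case by case on $(\eta(-1),\eta(0),\eta(1))\in\{0,1\}^3$)
\[
1-\eta(0)\;\le\;(1-\eta(0))[\eta(-1)+\eta(1)]\;+\;(1-\eta(-1))(1-\eta(0))(1-\eta(1)),
\]
which reduces matters to showing that $n$ times the integrated ``three-empty'' quantity is negligible. For this I invoke the attractiveness of the slow-site SSEP, readily verified from Harris's basic coupling, since slowing exchanges at a single site preserves the monotone order. The assumption $\zeta:=\inf\gamma>0$ ensures that the initial law $\mu_n$ stochastically dominates the iid Bernoulli($\zeta$) product measure $\tilde\nu_\zeta$, and one may couple so that $\xi_\tau(x)\le\eta_\tau(x)$ pointwise, where $\{\xi_\tau\}$ is the slow-site SSEP started from $\tilde\nu_\zeta$. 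The advantage of this reduction is that $\mathbf H(\tilde\nu_\zeta|\nu_\zeta)=O(\log n)$ (the two product measures differ only at site~$0$, and a direct computation using $1-m_\zeta(0)=O(n^{-\beta})$ gives $\mathbf H_0=(1-\zeta)\beta\log n+O(1)$), far smaller than the $O(n)$ bound of Proposition~\ref{prop:4.2}; meanwhile, under the reversible measure $\nu_\zeta$ the three-empty event has probability $(1-\zeta)^2(1-m_\zeta(0))=O(n^{-\beta})$. Transferring the stationary bound back to $\tilde\nu_\zeta$ via the entropy inequality then yields the desired $o(1/n)$ estimate.

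\paragraph{Main obstacle.} The delicate step is the final transfer from $\nu_\zeta$ to $\tilde\nu_\zeta$: a naive application of the entropy inequality yields only $O(\log n)/\log n=O(1)$ rather than a vanishing bound, so finer tools are required. Natural candidates are exponential concentration (Chernoff) of the time-integrated three-empty indicator under $\nu_\zeta$---exploiting the fast $O(n^{-2})$ equilibration of site~$0$ on the diffusive scale---or an iteration over a larger block around the origin to enlarge the entropic gap. The main quantitative difficulty is balancing these two effects so as to extract a truly $o(1/n)$ control, and it is precisely here that the attractiveness structure, rather than a soft entropy argument, plays the decisive role.
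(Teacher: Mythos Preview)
Your proposal contains an acknowledged gap at the entropy-transfer step, and the paper's argument shows that this whole detour is unnecessary.

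The paper's proof is far shorter. After first reducing (via attractiveness, and because a single extra particle does not affect the empirical measure) to the case where the initial configuration has a particle at the origin almost surely, one can choose $p\in(0,\zeta]$ and dominate $\mu_n$ stochastically \emph{directly by the invariant measure} $\nu_p$ of Proposition~\ref{prop:2.1}. This domination holds because $\mu_n$ is product with marginals $\gamma(x/n)\ge\zeta\ge p=m_p(x)$ at $x\neq 0$ and marginal $1\ge m_p(0)$ at $x=0$. Since the event in \eqref{eq61} is decreasing, attractiveness gives $\bb P_{\mu_n}\{\cdot\}\le\bb P_p\{\cdot\}$. Now Chebyshev, Fubini, and \emph{stationarity} of $\nu_p$ yield
\[
\bb P_p\Big\{\frac{n}{t}\int_0^t(1-\eta_s(0))\,ds>\eps\Big\}
\;\le\;\frac{n}{\eps}\big(1-m_p(0)\big)
\;=\;\frac{n}{\eps}\cdot\frac{(1-p)g(n)}{(1-p)g(n)+p}
\;=\;O(n^{1-\beta})\;\to\;0.
\]
No Dynkin formula, no three-empty decomposition, no entropy inequality.

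The concrete flaw in your route is the choice of comparison measure. You dominate $\mu_n$ by the homogeneous Bernoulli product $\tilde\nu_\zeta$, which is \emph{not} stationary for the slow-site dynamics; this forces you into the entropy transfer to $\nu_\zeta$, where you correctly observe that the naive bound gives only $O(1)$. Worse, $\tilde\nu_\zeta\le_{\mathrm{st}}\nu_\zeta$ (since $m_\zeta(0)>\zeta$), so attractiveness goes the wrong way for bounding emptiness. The fix is to condition on a particle at the origin first (this is the paper's preparatory step, justified just before the proof of Proposition~\ref{prop:3.2}), after which $\mu_n\ge_{\mathrm{st}}\nu_p$ holds and stationarity does all the work.
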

This statement says that the  site $x=0$ remains empty a fraction of time smaller than $1/n$.
A simple heuristics for this statement is the following. The time a particles takes to escape the slow
site is, at least, an exponential random variable of parameter $g(n)$. If a random variable has exponential distribution of
parameter $\lambda$, its expectation is $1/\lambda$. Hence,  the time average  that a trapped particle takes to escape from the slow site
is at least $n^\beta/\alpha$ (if its neighboring sites are occupied, the trapped particle can spend even more time there). As time goes by, the slow site will remain empty a fraction of time at most $g(n)=\alpha n^{-\beta}$. Since $\beta>1$, this would lead to  \eqref{eq61}.

Despite the simplicity of the heuristics above, it is not straightforward to transform it into a rigorous argument. In order to prove Proposition \ref{prop:3.2}, we will make use of attractiveness and  the knowledge on the invariant measures. 
 For that purpose, in $\{0,1\}^{\bb T_n}$ we introduce the natural order between configurations: $\eta\leq \zeta$ if and only if $\eta(x)\leq \zeta(x)$ for all $x\in\bb T_n$.
A function $f:\{0,1\}^{\bb T_n}\to\bb R$ is said to be monotone if  $f(\eta)\leq f(\zeta)$ whenever $\eta\leq \zeta$.
This partial order is naturally extended to the space of measures.

We write $\mu_1 \leq_{\textrm{st}}\mu_2$ if, and only if,
\begin{equation*}
 \int f\,d\mu_1\;\leq \; \int f\,d\mu_2
\end{equation*}
for all monotone functions $f$. In this case, we say  that $\mu_1$ is
\emph{stochastically dominated from above} by $\mu_2$. The next result is well known and can found in \cite{liggett} for instance.
\begin{theorem}\label{th:3.2}
 Let $\mu_1$ and $\mu_2$ be two probability measures on $\{0,1\}^{\bb T_n}$. The statements below are equivalent:
 \begin{enumerate}
  \item $\mu_1\leq_{\textrm{st}} \mu_2$;
  \item There exists a probability measure $\bar{\mu}$ on $\{0,1\}^{\bb T_n}\times \{0,1\}^{\bb T_n}$ such that
  its first and second marginals are $\mu_1$ and $\mu_2$, respectively, and $\bar{\mu}$ is ``concentrated above the diagonal", which means
  \begin{equation*}
   \bar{\mu}\;\big\{ (\eta,\zeta)\,:\, \eta\leq \zeta\big \}\;=\;1\,.
  \end{equation*}
 \end{enumerate}
\end{theorem}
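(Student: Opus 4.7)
The implication (2) $\Rightarrow$ (1) is immediate and I would dispatch it first: given any coupling $\bar\mu$ concentrated on $\{(\eta,\zeta):\eta\leq\zeta\}$ and any monotone $f$,
\begin{equation*}
\int f\,d\mu_2 - \int f\,d\mu_1 \;=\; \int \big(f(\zeta)-f(\eta)\big)\,d\bar\mu(\eta,\zeta) \;\geq\; 0,
\end{equation*}
the integrand being nonnegative $\bar\mu$-almost surely by monotonicity of $f$.

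For the substantive direction (1) $\Rightarrow$ (2), my plan is to exploit the finiteness of $\Omega:=\{0,1\}^{\bb T_n}$ and recast the existence of $\bar\mu$ as a maximum-flow question. I would construct a capacitated network with a source $s$, a sink $t$, left nodes $\{u_\eta\}_{\eta\in\Omega}$ and right nodes $\{v_\zeta\}_{\zeta\in\Omega}$, assigning capacity $\mu_1(\{\eta\})$ to the arc $s\to u_\eta$, capacity $\mu_2(\{\zeta\})$ to $v_\zeta\to t$, and capacity $+\infty$ to $u_\eta\to v_\zeta$ whenever $\eta\leq\zeta$. A feasible $s$-$t$ flow of total value $1$ is precisely a joint probability measure $\bar\mu$ with marginals $\mu_1,\mu_2$ supported on $\{\eta\leq\zeta\}$, so producing $\bar\mu$ reduces to proving that the maximum flow equals $1$.

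By the max-flow min-cut theorem, it then suffices to check that every finite $s$-$t$ cut has capacity at least $1$. Each such cut is parametrized by a set $A\subseteq\Omega$ of $u$-nodes kept on the source side, and the infinite-capacity arcs force every $v_\zeta$ with $\zeta$ in the up-set $U(A):=\{\zeta:\exists\,\eta\in A \text{ with } \eta\leq\zeta\}$ to also stay on the source side. The resulting cut capacity is
\begin{equation*}
\mu_1(\Omega\setminus A) + \mu_2(U(A)) \;=\; 1-\mu_1(A) + \mu_2(U(A)),
\end{equation*}
so the required bound becomes $\mu_2(U(A))\geq \mu_1(A)$. Since the indicator $\mathbf{1}_{U(A)}$ of an up-set is a monotone function on $\Omega$, hypothesis (1) yields $\mu_2(U(A))\geq \mu_1(U(A))$, and the inclusion $A\subseteq U(A)$ closes the estimate.

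The one delicate step I would flag is the identification of minimal cuts with subsets $A\subseteq\Omega$ together with the observation that the forbidden right-side nodes form exactly the up-set $U(A)$; once this combinatorial reduction is in place, monotonicity of $\mathbf{1}_{U(A)}$ converts feasibility of the flow into the stochastic-domination hypothesis in a single inequality. Finiteness of $\Omega$ is essential: for general Polish state spaces one would invoke Strassen's theorem, but on $\{0,1\}^{\bb T_n}$ the finite linear-programming duality encoded in max-flow/min-cut yields a self-contained proof.
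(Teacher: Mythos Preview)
Your argument is correct. The direction (2) $\Rightarrow$ (1) is handled cleanly, and for (1) $\Rightarrow$ (2) the max-flow/min-cut reduction is valid on the finite space $\{0,1\}^{\bb T_n}$: the identification of finite-capacity cuts with pairs $(A,U(A))$ is right (any $v_\zeta$ with $\zeta\in U(A)$ left on the sink side would cross an infinite arc), and the final inequality $\mu_2(U(A))\geq\mu_1(U(A))\geq\mu_1(A)$ follows exactly as you say from monotonicity of $\mathbf{1}_{U(A)}$ and the inclusion $A\subseteq U(A)$.

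Note, however, that the paper does not actually prove this statement: it is quoted as a well-known result and simply referred to \cite{liggett}. So there is no ``paper's own proof'' to compare against beyond that citation. Your max-flow/min-cut argument is one of the standard routes to Strassen's theorem in the finite setting; the proof in Liggett proceeds somewhat differently (via a functional-analytic/compactness argument that also covers the general Polish-space case). What your approach buys is a fully elementary, self-contained proof tailored to finite $\Omega$, at the cost of not extending directly to infinite state spaces---which, as you correctly observe, is irrelevant here since $\{0,1\}^{\bb T_n}$ is finite.
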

Next, we  construct such a measure $\bar{\mu}$, with the aforementioned  property, by means of the so-called \emph{graphical construction}.

Fix $n\in\bb N$. For each site $x$ of $\bb T_n$,  we associate two Poisson point processes $\mc N_{x}^{n,-}$ and $\mc N_{x}^{n,+}$, all of them being independent. The parameters of those Poisson process agree with the  Figure \ref{fig:1}. In other words, the parameter of $\mc N_{x}^{n,-}$
and of $\mc N_{x}^{n,+}$ is one for all $x\in\bb T_n$, except for $x=0$, for which
the parameter of $\mc N_{0}^{n,-}$ and $\mc N_{0}^{n,+}$ is equal to $g(n)$.

Given an initial configuration of particles $\eta\in\{0,1\}^{\bb T_n}$ and the ``toss" of those Poisson processes, the dynamics
will be the following. At a time arrival of some Poisson process, let us say, a time arrival of $\mc N_{x}^{n,-}$, if there is a
particle at the site $x$, and there is no particle at the site  $x-1$, the particle at $x$ moves to $x-1$. The
analogous happens with respect to a Poisson process of type $\mc N_{x}^{n,+}$, in which the movement (if possible) is from $x$ to
$x+1$. This construction yields the same Markov process previously defined via the generator given in  \eqref{generator}.

Consider now two probability measures $\mu_1$ and $\mu_2$ in $\{0,1\}^{\bb T_n}$ such that $\mu_1\leq_{\textrm{st}} \mu_2$.
By  Theorem \ref{th:3.2}, there exists a measure $\bar{\mu}$ on $\{0,1\}^{\bb T_n}\times \{0,1\}^{\bb T_n}$ concentrated
above the diagonal, as it is stated there. Evolving a configuration $(\eta^1,\eta^2)$, chosen by $\bar{\mu}$, by the same set of
Poisson point processes described above, we are lead to $\eta^1_t\leq \eta^2_t$, for any future time $t>0$. A stochastic
process enjoying this property of preserving the partial order  is said to be \emph{attractive}. See Liggett's book \cite{liggett}
for more details on the subject. 

 Notice that the specific value of $g(n)$ does not play any role in the argument above. In  resume, we can say that  the defect does not destroy attractiveness.

Having established attractiveness of the exclusion process with a slow site we make the following observation.  Once the Theorem \ref{th:1} is true for initial measures $\mu_n$ \emph{conditioned to have a particle at the origin}, the statement will remain in force for initial measures $\mu_n$. This is explained as follows.

By attractiveness, we can construct both processes (the one starting from $\mu_n$ and the one starting from $\mu_n$ conditioned to have a particle at the origin) in such a way that these processes will differ at most at one site, for any later time $t$. Therefore, the empirical measures \eqref{f01} for each process will have the same limit in distribution.

  Without loss of generality, we assume henceforth that there is a particle at the origin at the initial time.

\begin{proof}[Proof of Proposition \ref{prop:3.2}]
Since we  have assumed $\gamma(x)\geq \zeta>0$, for all $x\in\bb T$, since there is a particle at the origin and since $\mu_n$ is a product measure, we can find   $p>0$ small enough such  that $\mu_n\geq_{\textrm{st}} \nu_p$, for any $n\in\bb N$.

Fix $\eps>0$. By  attractiveness,
\begin{equation*}
\bb P_{\mu_n}\Big\{\,\eta_\bola\,:\,\frac{n}{t}\int_{0}^{t}\eta_{s}(0)\,ds>\eps\Big\}\;\geq \; \bb
P_{{p}}\Big\{\eta_\bola\,:\,\frac{n}{t}
\int_{0}^{t}\eta_{s}(0)\,ds>\eps\, \Big\}\,,
\end{equation*}
 which in turn implies
\begin{equation*}
\bb P_{\mu_n}\Big\{\,\eta_\bola\,:\,\frac{n}{t}\int_{0}^{t}(1-\eta_{s}(0))\,ds>\eps\Big\}\;\leq \; \bb
P_{p}\Big\{\,\eta_\bola\,:\,
\frac{n}{t}\int_{0}^{t}(1-\eta_{s}(0))\,ds>\eps\, \Big\}\,.
\end{equation*}

By  Chebyshev's inequality and Fubini's Theorem,
\begin{equation*}
\begin{split}
\bb
P_{p}\Big\{\,\eta_\bola\,:\, \frac{n}{t}\int_{0}^{t}(1-\eta_{s}(0))\,ds>\eps \,\Big\}\;&\leq \;  \frac{n}{\eps}\,\bb E_{p}\Big[\,\frac{1}{t}\int_{0}^{t} (1-\eta_{s}(0))\,ds\,\Big]\\
	&=\; \frac{n}{\eps}\,\Big(1-\frac{1}{t}\int_{0}^{t} \bb E_{p}[\eta_{s}(0)]\,ds\Big)\,.\\
\end{split}
\end{equation*}
Since
\begin{equation*}
 \nu_p\{\eta\;;\;\eta(0)=1\}\;=\;\frac{\pfrac{p}{g(n)}}{(1-p)+\pfrac{p}{g(n)}}\,,
\end{equation*}
we obtain that
\begin{equation*}
\bb P_{\mu_{n}}\Big\{\,\eta_\bola\,:\,\frac{n}{t}\int_{0}^{t}(1-\eta_{s}(0))\,ds>\eps\,\Big\}\;\leq\;
\frac{n}{\eps}\,\Big(1-\frac{\pfrac{p}{g(n)}}{(1-p)+\pfrac{p}{g(n)}}\Big)\,,\\
\end{equation*}
finishing the proof since $g(n)=\alpha n^{-\beta}$, $\beta>1$.
\end{proof}

\subsection{Hydrodynamic limit  for $g(n)=\alpha n^{-\beta}$}\label{sub3.5}
Recall that we have denoted $\lfloor \eps n\rfloor$, the integer part of $\eps n$, simply by $\eps n$.
Define the right average at $1\in \bb T_n$, and the left average at $-1\in \bb T_n$  by
\begin{equation}\label{lat_av}
 \eta^{\eps n,\textrm{R}}(1)\;:=\;\pfrac{1}{\eps n}\sum_{y=1}^{\eps n}\eta(y)\quad \textrm{ and }\quad
 \eta^{\eps n,\textrm{L}}(-1)\;:=\;\pfrac{1}{\eps n}\sum_{y=n-\eps n}^{n-1}\eta(y)\,,
\end{equation}
respectively.
Notice that none of these sums involve the occupation at the slow site $0\in \bb T_n$.
\begin{proposition}\label{Replacement}
For any $t>0$,
\begin{equation*}
 \limsup_{\eps\downarrow 0}\;\limsup_{n\to\infty}\;
\bb E_{\mu_n}\Big[\,\Big|\int_0^t\Big(\eta_{s}(1)- \eta^{\eps n,\,\textrm{R}}_s(1)\Big)\, ds\Big|\,\Big]\;=\;0
\end{equation*}
and
\begin{equation*}
 \limsup_{\eps \downarrow 0}\;\limsup_{n\to\infty}\;
\bb E_{\mu_n}\Big[\,\Big|\int_0^t\Big(\eta_{s}(-1)- \eta^{\eps n,\,\textrm{L}}_s(-1)\Big)\, ds\Big|\,\Big]\;=\;0\,.
\end{equation*}
\end{proposition}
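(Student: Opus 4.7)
The proof will follow the classical entropy plus Feynman-Kac replacement scheme. The decisive point is that the average $\eta^{\eps n,\textrm{R}}(1)$ is taken over sites $1,\ldots,\eps n$, all strictly to the right of the slow site, so the argument will only use the ``regular'' bonds in the Dirichlet form \eqref{Dirichlet} (whose rates are $1$) and never has to activate the bottleneck rate $g(n)$.

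By the entropy inequality, Proposition \ref{prop:4.2} and the bound $e^{|x|}\le e^{x}+e^{-x}$, for every $A>0$ the expectation I must control is dominated by
\begin{equation*}
\frac{K_0}{A}\;+\;\frac{\log 2}{An}\;+\;\frac{1}{An}\max_{\sigma=\pm 1}\log\bb E_{\nu_p}\!\Big[\exp\!\Big(\sigma An\!\int_0^t V^{\eps,n}_s\,ds\Big)\Big],
\end{equation*}
where $V^{\eps,n}=\eta(1)-\eta^{\eps n,\textrm{R}}(1)$. Feynman-Kac together with the variational formula for the Perron eigenvalue of $n^2\mf L_n+\sigma An V^{\eps,n}$ bounds each logarithm above by
\begin{equation*}
t\,\sup_f\Big\{\,\sigma An\!\int V^{\eps,n}f\,d\nu_p\,-\,n^2\,\mf D_n(\sqrt f)\,\Big\},
\end{equation*}
the supremum being taken over $\nu_p$-densities $f$.

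To bound this supremum I use the telescoping identity
\begin{equation*}
V^{\eps,n}\;=\;\frac{1}{\eps n}\sum_{z=1}^{\eps n-1}(\eps n-z)\bigl(\eta(z)-\eta(z+1)\bigr),
\end{equation*}
and observe that for every $z\in\{1,\ldots,\eps n-1\}$ the transposition $\eta\mapsto\eta^{z,z+1}$ preserves $\nu_p$, since $0\notin\{z,z+1\}$. After symmetrizing and applying Young's inequality with a weight $\theta_z>0$, each term is bounded by $\tfrac{1}{4\theta_z}\int\bigl(\sqrt f-\sqrt{f(\eta^{z,z+1})}\bigr)^2 d\nu_p+\theta_z$. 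The choice $\theta_z=A(\eps n-z)/(2\eps n^2)$ arranges the quadratic pieces, weighted by $A(\eps n-z)/\eps$ coming from the telescoping, to be dominated term-by-term by the regular-bond summands of $n^2\mf D_n(\sqrt f)$, leaving a purely deterministic residue of order $A^{2}\eps n$. Substituting back, the original expectation is at most $K_0/A+CA\eps t+o_n(1)$; the choice $A=\eps^{-1/2}$ yields a bound $O(\sqrt\eps)$, which vanishes when $n\to\infty$ and then $\eps\to 0$.

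The second statement, for the left average, is proved by the same argument applied to the bonds with endpoints in $\{-\eps n,-\eps n+1,\ldots,-1\}$. The main (and in fact only) subtlety specific to the slow-site model is the insistence on keeping the averaging strictly to one side of the origin: if one were to include sites on both sides in a single average, the transport cost across the slow bond would have to be absorbed by a Dirichlet-form contribution of size $g(n)\to 0$, which is impossible. This is precisely the structural reason why the replacement must be formulated separately for the right and left averages, foreshadowing the Neumann (no-flux) boundary condition that emerges at the origin in the macroscopic limit.
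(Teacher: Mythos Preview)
Your proposal is correct and follows essentially the same route as the paper: entropy inequality (via Proposition~\ref{prop:4.2}) to pass from $\mu_n$ to $\nu_p$, Feynman--Kac variational formula, telescoping over the regular bonds $\{z,z+1\}$ with $1\le z\le \eps n-1$ (all avoiding the slow site, so $\nu_p$ is invariant under $\eta\mapsto\eta^{z,z+1}$), symmetrization, and Young's inequality to absorb the quadratic pieces into $n^2\mf D_n(\sqrt f)$. The only cosmetic differences are that you write the telescoping as a single weighted sum and use bond-dependent Young weights $\theta_z$ followed by optimizing $A=\eps^{-1/2}$, whereas the paper keeps the double sum, uses a single Young parameter, and sends the free entropy parameter to infinity; both yield the same $O(\sqrt\eps)$ bound.
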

The last result says that we can replace the occupation at the neighboring sites of the slow site by their averages in closed boxes, provided that these boxes do not cross the slow site. This kind of argument appears often in the literature and can be found, for example, in \cite{Landim}.

\begin{proof}
We treat the case $x=+1$,  the case $x=-1$ being analogous. From Jensen's inequality and the definition of the entropy, for any $N>0$,
the expectation appearing in the statement of this proposition is bounded from above by
\begin{equation}\label{log000}
\frac{{\bf H}(\mu_n|\nu_p)}{N n}+\frac{1}{N n}\log \bb E_{p} \Big[
\exp\Big\{N\,n\,\Big|\int_0^t \{
\eta_s(1)-\eta^{\eps n,\,\textrm{R}}_s(1)\}\,ds\Big|\Big\}\Big]\,.
\end{equation}
By Proposition \ref{prop:4.2}, ${\bf H}(\mu_n|\nu_p)\leq K_0\,n$, hence the term on the left hand side of last expression is bounded from above by $K_0/N$. Now, we bound the remaining term. Since $e^{|x|}\leq e^x+e^{-x}$ and
\begin{equation}\label{log bounds}
\limsup_n \pfrac{1}{n}\log (a_n+b_n)\;=\; \max\Big\{
\limsup_n \pfrac{1}{n}\log (a_n)\,,\,\limsup_n \pfrac{1}{n}\log(b_n)\Big\}\,,
\end{equation}
we can remove the modulus inside the exponential. Moreover, by the Feynman-Kac formula\footnote{See, for example, Lemma A1.7.2 of \cite{kl}.}
 the term on the right hand side of \eqref{log000}  is less than or equal to
\begin{equation*}
t\sup_{f\textrm{~density}}\Big\{\int
\{\eta(1)-\eta^{\eps n,\,\textrm{R}}_s(1)\}f(\eta)\nu_p(d\eta) - n\,\mf D_n(\sqrt{f})\Big\}\,ds\,.
\end{equation*}
Notice that the expression above does not depend on $N$. We claim now that, for any density $f$,
\begin{equation*}
\int
\{\eta(1)-\eta^{\eps n,\,\textrm{R}}_s(1)\}f(\eta)\nu_p(d\eta) \;\leq\; 2\eps+ n\,\mf D_n(\sqrt{f})
\end{equation*}
Since $N$ is arbitrary large, once we prove this claim    the proof will be finished. By the definition in \eqref{lat_av},
  \begin{equation*}
 \int
\{\eta(1)-\eta^{\eps n,\,\textrm{R}}_s(1)\}f(\eta)\nu_p(d\eta)\;=\;
 \int \Big\{\pfrac{1}{\eps n}\sum_{y=1}^{\eps n}(\eta(1)-\eta(y))\Big\}f(\eta)\,\nu_p(d\eta)\,.
 \end{equation*}
Writing $\eta(x)-\eta(y)$ as a telescopic sum, the right hand side of above can be rewritten as
 \begin{equation*}
  \int \Big\{\pfrac{1}{\eps n}\sum_{y=1}^{\eps n}
\sum_{z=1}^{y-1}(\eta(z)-\eta(z+1))\Big\}f(\eta)\,\nu_p(d\eta)\,.
 \end{equation*}
Rewriting the expression above as twice the half and making the transformation $\eta\mapsto \eta^{z,z+1}$ (for
which the probability $\nu_p$ is invariant) it becomes
\begin{equation*}
 \pfrac{1}{2\eps n}\sum_{y=1}^{\eps n}
\sum_{z=1}^{y-1}\int \{ \eta(z)-\eta(z+1)\}(f(\eta)-f(\eta^{z,z+1}))\,\nu_p(d\eta)\,.
 \end{equation*}
By $(a-b)=(\sqrt{a}-\sqrt{b})(\sqrt{a}+\sqrt{b})$ and Cauchy-Schwarz's inequality, we bound the previous expression from above by
\begin{equation*}
\begin{split}
 &\pfrac{1}{2\eps n}\sum_{y=1}^{\eps n}
\sum_{z=1}^{y-1}A\int\{\eta(z)-\eta(z+1)\}^2\Big(\sqrt{f(\eta)}+\sqrt{f(\eta^{z,z+1})}\Big)^2\,\nu_p(d\eta)\\\
 +\,&\pfrac{1}{2\eps n}\sum_{y=1}^{\eps n}
\sum_{z=1}^{y-1}\pfrac{1}{A}\int \Big(\sqrt{f(\eta)}-\sqrt{f(\eta^{z,z+1})}\Big)^ 2\,\nu_p(d\eta)\,,
\end{split}
 \end{equation*}
 for any $A>0$. Since $f$ is a density and recalling  \eqref{Dirichlet}, the expression above is bounded by $A^{-1}\mf D_n(f)+2A\eps n$.  
 Choosing $A=1/n$ we achieve the claim, concluding the proof.

\end{proof}

\begin{proof}[Proof of Proposition \ref{prop:4.1}  for $g(n)=\alpha n^{-\beta}$, $\alpha>0$, $\beta>1$.]
Again, let $\bb Q^*$ be the weak limit
of some convergent subsequence  $\{\bb Q^{n_j}_{\mu_{n_j}}\}_{j\in \bb N}$ of the sequence  $\{\bb Q^{n}_{\mu_n}\}_{n\in \bb N}$ and to keep notation simple,  denote this subsequence  by $\{\bb Q^{n}_{\mu_{n}}\}_{n\in \bb N}$.
Recall that by Proposition \ref{Prop3.3},  the probability measure $\bb Q^*$ is concentrated on
trajectories $\pi(t,du) =\rho(t,u) du$ such that $\rho(t,u)\in L^2(0,T;\mc H^1)$.
By the notion of trace in Sobolev spaces,  the integrals
\begin{equation}\label{boundary}
\int_0^t\rho(s,0)\,ds\qquad \textrm{and} \qquad \int_0^t\rho(s,1)\,ds
\end{equation}
are well defined and are finite. See \cite{e} for the properties of Sobolev spaces.
More than that, since the Sobolev space in one dimension is composed of functions 
which are absolutely continuous, $\rho$ has indeed left and right limits at zero.

Our goal here  is to conclude that $\rho$  is a weak solution of the partial differential
equation \eqref{hen}.

For this purpose, let $H\in C^2[0,1]$. Notice that, if $H$ is seen as a function in the torus, $H$ is possibly discontinuous at zero. We impose that $H(0)=0$.
We  claim  that
\begin{equation}\label{Q2}
\begin{split}
 \bb Q^{*}\Big\{\,\pi:\; &
  \<\rho_t, H \> \,-\,
 \<\rho_0,H \> \,-\,
 \int_0^t  \,  \<\rho_s , \partial_u^2 H \> \,ds \\
 & -\int_0^t\big(\rho_s(0)\,\partial_u H(0)-\rho_s(1)\,\partial_u H(1)\big)\,ds
 \,=\,0,\,\forall\, t\in[0,T]\, \Big\}\;=\;1\,.
  \end{split}
  \end{equation}
 To prove this, it suffices to show that
 \begin{equation*}
\begin{split}
 \bb Q^{*}\Big\{\,\pi:\; & \sup_{0\le t\le T}
\Big\vert
  \<\rho_t, H \> \,-\,
 \<\rho_0,H \> \,-\,
 \int_0^t  \,  \<\rho_s , \partial_u^2 H \> \,ds \\
 & -\int_0^t\big(\rho_s(0)\,\partial_u H(0)-\rho_s(1)\,\partial_u H(1)\big)\,ds
 \,\Big\vert
 \, > \, \delta\, \Big\}\;=\;0\,,
  \end{split}
  \end{equation*}
    for any $\delta>0$. Since the integrals in \eqref{boundary} are not defined in the whole Skorohod space $\mc D([0,T], \mc M)$, we cannot apply Portmanteau's Theorem yet.

  For that purpose,  let $\iota_\eps(u)=\eps^ {-1}\textbf{1}_{(0,\eps]}(u)$. Adding and subtracting the convolution of $\rho(t,u)$ with $\iota_\eps$ at the boundaries, we bound the previous probability by the sum of
     \begin{equation}\label{eq74}
\begin{split}
 \bb Q^{*}&\Big\{\,\pi:\;  \sup_{0\le t\le T}
\Big\vert
  \<\rho_t, H \> \,-\,
 \<\rho_0,H \> \,-\,
 \int_0^t  \,  \<\rho_s , \partial_u^2 H \> \,ds \\
 & -\int_0^t\big(\rho_s*\iota_\eps)(0)\,\partial_u H(0)\,ds +\int_0^t(\rho_s*\iota_\eps)(1-\eps)\,\partial_u H(1)\big)\,ds
 \,\Big\vert
 \, > \, \delta/2\, \Big\}  \end{split}
  \end{equation}
  and
   \begin{equation*}
\begin{split}
 \bb Q^{*}\Big\{\,\pi:\; & \sup_{0\le t\le T}
\Big\vert
 \int_0^t\big(\rho_s*\iota_\eps)(0)\,\partial_u H(0)\,ds -\int_0^t(\rho_s*\iota_\eps)(1-\eps)\,\partial_u H(1)\big)\,ds  \\
 & -\int_0^t\big(\rho_s(0)\,\partial_u H(0)\,ds +\int_0^t\rho_s(1)\,\partial_u H(1)\big)\,ds
 \,\Big\vert
 \, > \, \delta/2\, \Big\}\,.
  \end{split}
  \end{equation*}
    Since $\rho$ has left and right side limits, taking $\eps$ small, the previous probability goes to zero, as $n\to\infty$. It remains to bound \eqref{eq74}. By Portmanteau's Theorem and since there is at most one particle per site,  \eqref{eq74} is bounded from above by
 \begin{equation}\label{eq75}
 \begin{split}
  &\limsup_{n\to\infty} \; \bb Q^{n}_{\mu_n} \Big\{\,\pi:\;  \sup_{0\le t\le T}
\Big\vert
  \<\pi_t^n, H \> \,-\,
 \<\pi_0^n,H \> \,-\,
 \int_0^t  \,  \< \pi^n_s , \partial_u^2 H \> \,ds \\
 &  -\int_0^t\< \pi_t^n, \eps^{-1} \textbf{1}_{(0,\eps]}\>\,\partial_u H(0)\,ds +\int_0^t\< \pi_t^n, \eps^{-1} \textbf{1}_{(1-\eps,1]}\>\,\partial_u H(1)\,ds
 \,\Big\vert
 \, > \, \delta/2\,\Big\}\,.
 \end{split}
\end{equation}
    Noticing the identities
\begin{equation*}
 \eta^{\eps n,\textrm{R}}_s(1)\;=\;\< \pi_s^n, \eps^{-1} \textbf{1}_{(0,\eps]}\> \qquad \textrm{ and }\qquad   \eta^{\eps n,\textrm{L}}_s(-1)\;=\;\< \pi_s^n, \eps^{-1} \textbf{1}_{(1-\eps,1]}\>\,,
\end{equation*}
we can rewrite \eqref{eq75} as
 \begin{equation*}
 \begin{split}
  &\limsup_{n\to\infty} \; \bb P_{\mu_n} \Big\{\,\eta:\;  \sup_{0\le t\le T}
\Big\vert
  \<\pi_t^n, H \> \,-\,
 \<\pi_0^n,H \> \,-\,
 \int_0^t  \,  \< \pi^n_s , \partial_u^2 H \> \,ds \\
 &  \quad\quad  \quad\quad-\int_0^t \eta^{\eps n,\textrm{R}}_s(1)\,\partial_u H(0)\,ds +\int_0^t \eta^{\eps n,\textrm{L}}_s(-1)\,\partial_u H(1)\,ds
 \,\Big\vert
 \, > \, \delta/2\,\Big\}\,.
 \end{split}
\end{equation*}
Recalling Proposition \ref{Replacement}, in order to prove that the limit above is equal to zero, it is enough to show that the limit below is  null:
 \begin{equation*}
 \begin{split}
  &\limsup_{n\to\infty} \; \bb P_{\mu_n} \Big\{\,\eta:\;  \sup_{0\le t\le T}
\Big\vert
  \<\pi_t^n, H \> \,-\,
 \<\pi_0^n,H \> \,-\,
 \int_0^t  \,  \< \pi^n_s , \partial_u^2 H \> \,ds \\
 &  -\int_0^t \eta_s(1)\,\partial_u H(0)\,ds +\int_0^t \eta_s(-1)\,\partial_u H(1)\,ds
 \,\Big\vert
 \, > \, \delta/2\,\Big\}\,.
 \end{split}
\end{equation*}
 Adding and subtracting $n^2\,\mf L_n\<\pi^n_s, H\>$,  the previous expression is bounded from above by the sum of
\begin{equation}\label{eq76a}
\begin{split}
&\limsup_{n\to\infty} \bb P_{\mu_n} \Big\{ \, \sup_{0\le t\le T}
\Big\vert \<\pi_t^n, H \> \,-\,
\<\pi_0^n,H \> \,-\,
\int_0^t  \,  n^2\,\mf L_n\<\pi^n_s, H\> \,ds\, \Big\vert
 \, > \, \delta/4\, \Big\}\\
 \end{split}
 \end{equation}
 and
 \begin{equation}\label{eq77}
\begin{split}
  \limsup_{n\to\infty} \bb P_{\mu_n} \Big\{\, &\sup_{0\le t\le T} \Big\vert \int_0^t
 n^2\,\mf L_n\<\pi^n_s, H\>\,ds -  \int_0^t\<\pi_s^n ,\partial_u^2 H \>\,ds \\
&
 -\int_0^t \eta_s(1)\,\partial_u H(0)\,ds +\int_0^t \eta_s(-1)\,\partial_u H(1)\,ds \,\Big\vert
 \, > \, \delta/4 \,\Big\}\,.
\end{split}
\end{equation}
As can be easily verified, by the imposed conditions on the test function $H$, the quadratic variation of the martingale  $M^{n}_{t}(H)$ given in \eqref{M} goes to zero, as $n\to\infty$. Therefore, by Doob's inequality, \eqref{eq76a} is null.

It remains to show that \eqref{eq77} also vanishes. Recall \eqref{integralterm} for $n^2\,\mf L_n\<\pi^n_s, H\>$. Let us examine its terms.
 The first term in the sum \eqref{integralterm} is
\begin{equation*}
n\sum_{x\in\bb T_n\backslash\{0\}}\Big(H(\pfrac{x+1}{n})+H(\pfrac{x-1}{n})-2H(\pfrac{x}{n})\Big)\eta_{s}(x)\,,
\end{equation*}
which we split into the sum of
\begin{equation}\label{eq78a}
n\sum_{x\in\bb T_n\backslash\{-1,0,1\}}\Big(H(\pfrac{x+1}{n})+H(\pfrac{x-1}{n})-2H(\pfrac{x}{n})\Big)\eta_{s}(x)
\end{equation}
and
\begin{equation}\label{eq79a}
n \Big(H(\pfrac{2}{n})-2H(\pfrac{1}{n})\Big)\eta_{s}(1)+n \Big(H(\pfrac{-1}{n})-2H(\pfrac{-2}{n})\Big)\eta_{s}(-1)\,.
\end{equation}
The difference between \eqref{eq78a} and
\begin{equation*}
\<\pi_s^n ,\partial_u^2 H \> \;=\;\pfrac{1}{n}\sum_{x\in \bb T_n}  \p_u^2 H(\pfrac{x}{n})\,\eta_s(x)
\end{equation*}
in bounded (in modulus) by $c_H/n$,  because $H\in C^2[0,1]$, where $c_H>0$ is a constant depending only on $H$.
The second term in the sum \eqref{integralterm} is
\begin{equation*}
 n^{1-\beta}\Big(H(\pfrac{1}{n})+H(\pfrac{-1}{n})-2H(\pfrac{0}{n})\Big)\eta_{s}(0)
\end{equation*}
which converges to zero as $n\to\infty$, because $\beta>1$.
The last term in \eqref{integralterm} is
\begin{equation*}
n(1-g(n))\Big[(H(\pfrac{-1}{n})-H(\pfrac{0}{n}))\,\eta_{s}(0)\,\eta_{s}(-1)+
(H(\pfrac{1}{n})-H(\pfrac{0}{n}))\,\eta_{s}(0)\,\eta_{s}(1)\Big],
\end{equation*}
which can be rewritten as the sum of
\begin{equation}\label{eq78}
n(1-g(n))\Big[(H(\pfrac{-1}{n})-H(\pfrac{0}{n}))\,\eta_{s}(-1)+
(H(\pfrac{1}{n})-H(\pfrac{0}{n}))\,\eta_{s}(1)\Big]
\end{equation}
and
\begin{equation*}
n(\eta_s(0)-1)(1-g(n))\Big[(H(\pfrac{-1}{n})-H(\pfrac{0}{n}))\,\eta_{s}(-1)+
(H(\pfrac{1}{n})-H(\pfrac{0}{n}))\,\eta_{s}(1)\Big]\,.
\end{equation*}
By Proposition \ref{prop:3.2}, the time integral of the last term in the previous expression converges to zero in probability, as $n\to\infty$.
Since $H(\pfrac{0}{n})=0$ and $\beta>1$, the expression \eqref{eq79a} plus the expression \eqref{eq78} is equal to
\begin{equation*}
n \Big(H(\pfrac{2}{n})-H(\pfrac{1}{n})\Big)\eta_{s}(1)+n \Big(H(\pfrac{-1}{n})-H(\pfrac{-2}{n})\Big)\eta_{s}(-1)\,,
\end{equation*}
plus an error of order $n^{1-\beta}$. The expression above is, asymptotically in $n$, the same as
\begin{equation*}
\eta_s(1)\,\partial_u H(0) - \eta_s(-1)\,\partial_u H(1)\,,
\end{equation*}
whose time integral  cancels with the remaining time integrals of \eqref{eq77} and therefore proves that \eqref{eq77} vanishes. This concludes the claim \eqref{Q2}.

Now, let $\{H_i\}_{i\geq{1}}$ be a countable dense set of functions on $C^2[0,1]$, with respect to the norm $\|H\|_{\infty}+\|\p_uH\|_{\infty}+\|\partial_u^2H\|_\infty$. Since \eqref{Q2} is true for each one of these functions $H_i$, we can extend  \eqref{Q2}  for
all functions $H\in C^2[0,1]$ simultaneously by intersecting a countable number of sets of probability one. This proves that $\bb Q^*$ is concentrated on weak solutions of \eqref{hen}. Since there  exists only one weak solution of \eqref{hen}, it means that $\bb Q^*$ is equal to the
aforementioned probability measure $\bb Q$. Invoking the tightness that we have proved in Subsection \ref{sub4.1}, we conclude that the entire sequence $\{\bb Q^{n}_{\mu_n}\}_{n\in \bb N}$ converges to $\bb Q$, as $n\to\infty$.
\end{proof}

\section{Equilibrium density fluctuations}\label{s4}
In this section we prove Theorem \ref{flu1}.
Recall that here we take the process evolving on $\bb Z$
and  recall also   \eqref{flut}. By  Dynkin's formula,
\begin{equation}\label{M2}
\mc M^{n}_{t}(H)\;:=\; \mc Y^n_{t}(H)-  \mc Y^n_{0}(H)-\int_{0}^{t}n^2\mf L_{n} \mc Y^n_{s}(H)\,ds\,,
\end{equation}
is a martingale with respect to the natural filtration $\mathcal{F}_t:=\sigma(\eta_s: s\leq{t})$. Besides that,
\begin{equation}\label{quadratic2}
\Big(\mc M^{n}_{t}(H)\Big)^2-\int_{0}^{t}\Big(n^2\mf L_{n} \big(\mc Y^n_{t}(H)\big)^2- 2\,\mc Y^n_{s}(H)\,n^2\mf
L_{n}\,\mc Y^n_{s}(H)\Big)\,ds\,,
\end{equation}
is also a martingale with respect to the same filtration. By 
\[
\mc I^n_t(H)\;=\;\int_0^t n^2\mf L_{n} \mc Y^n_{s}(H)\,ds
\]
we denote the integral part in \eqref{M2}. First we observe  that
the expression  $$\sum_{x\in\bb Z}\Big(H(\pfrac{x+1}{n})+H(\pfrac{x-1}{n})-2H(\pfrac{x}{n})\Big) p$$ is  well defined since $H$ decays fast and is  equal to zero. Analogously to  \eqref{integralterm}, some direct calculations then  yield
\begin{equation}\label{eq433}
\begin{split}
&n^2\mf L_{n} \mc Y^n_{s}(H)  \;=\; n^{3/2}\sum_{x\neq -1,0,1}\Big[H(\pfrac{x+1}{n})+H(\pfrac{x-1}{n})-2H(\pfrac{x}{n})\Big]\bar{\eta}_s(x)\\
& + n^{3/2}\Big[H(\pfrac{2}{n})+H(\pfrac{0}{n})-2H(\pfrac{1}{n})\Big]\bar{\eta}_s(1) + n^{3/2}\Big[H(\pfrac{-2}{n})+H(\pfrac{0}{n})-2H(\pfrac{-1}{n})\Big]\bar{\eta}_s(-1)\\
&+n^{3/2} (1-g(n))\Big[
(H(\pfrac{1}{n})-H(\pfrac{0}{n}))\eta_s(0)\eta_s(1)+(H(\pfrac{-1}{n})-H(\pfrac{0}{n}))\eta_s(0)\eta_s(-1) \Big]\\
& + n^{3/2} g(n) \Big[H(\pfrac{1}{n})+H(\pfrac{-1}{n})-2H(\pfrac{0}{n})\Big]\eta_s(0)+ \Theta(n,p,H),\\
\end{split}
\end{equation}
where $\bar{\eta}_s(x)=\eta(x)-m_p(x)$ is the centered occupation variable, as in \eqref{flut} and
\begin{equation*}
\Theta(n,p,H)\;=\;-n^{3/2}\Big[H(\pfrac{1}{n})+H(\pfrac{-1}{n})-2H(\pfrac{0}{n})\Big]p.
\end{equation*}
The next two propositions  are direct calculations very similar to  \cite{fgn2}, and for this reason their proofs are omitted.
\begin{proposition}\label{prop41}
Consider $g(n)=1+o(1)$ and $H\in \mc S(\bb R)$. In this case,
\begin{equation*}
\lim_{n\to \infty }\bb E_p\Big[\big(\mc M^{n}_{t}(H)\big)^2\Big]\;=\; 2\,t\,\chi(p)\Vert \nabla H\Vert_{L^2(\bb R)}^2.
\end{equation*}
and
\begin{equation*}
\limsup_{n\to\infty} \bb E_p\Big[\big(\mc I^n_t(H)\big)^2\Big]\;\leq\;80\, t\,\chi(p)\Vert \nabla H\Vert^2_{L^2(\bb R)}
\end{equation*}
\end{proposition}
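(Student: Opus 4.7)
The plan is to compute both quantities at the reversible measure $\nu_p$ (whose reversibility on $\bb Z$ follows from the same detailed-balance check as in Proposition \ref{prop:2.1}, since only two bonds touching the origin need to be checked), exploiting the fact that $\mathfrak{L}_n$ is self-adjoint in $L^2(\nu_p)$.

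First I would treat $\bb E_p\bigl[(\mc M^n_t(H))^2\bigr] = \bb E_p\bigl[\langle \mc M^n(H)\rangle_t\bigr]$ by computing the quadratic variation explicitly via the carré du champ of $n^2 \mathfrak{L}_n$. Since
\[
\mc Y^n(H)(\eta^{x,y}) - \mc Y^n(H)(\eta)\;=\;\tfrac{1}{\sqrt{n}}\bigl(H(\tfrac{y}{n})-H(\tfrac{x}{n})\bigr)\bigl(\eta(x)-\eta(y)\bigr),
\]
the quadratic variation is an explicit sum over nearest-neighbour bonds weighted by $\xi^n_x \eta(x)(1-\eta(y))$, the exact analogue on $\bb Z$ of \eqref{quad2}. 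Taking $\nu_p$-expectation and using the product structure, each bulk bond acquires the factor $\chi(p)$, and the resulting Riemann sum converges under the diffusive scaling to $2t\chi(p)\|\nabla H\|_{L^2(\bb R)}^2$ (using that $H\in\mc S(\bb R)$). The four bonds adjacent to the origin each contribute $O(1/n)$ times a prefactor $g(n)$ or $1-g(n)$, both bounded since $g(n)=1+o(1)$, so those corrections are negligible in the limit.

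For the second estimate I would invoke the Kipnis--Varadhan variance inequality for reversible processes: for any mean-zero $V\in L^2(\nu_p)$,
\[
\bb E_p\Bigl[\Bigl(\int_0^t V(\eta_s)\,ds\Bigr)^2\Bigr]\;\leq\;c\,t\,\|V\|^2_{-1,n},
\]
where $\|V\|^2_{-1,n}:=\sup_f\bigl\{2\langle V,f\rangle_{\nu_p}-n^2\langle f,-\mathfrak L_n f\rangle_{\nu_p}\bigr\}$ and $c$ is a universal constant. Taking $V=n^2\mathfrak{L}_n\mc Y^n(H)$, self-adjointness and Cauchy--Schwarz applied to the Dirichlet bilinear form yield
\[
\|V\|^2_{-1,n}\;\leq\;n^2\,\langle \mc Y^n(H),\,-\mathfrak L_n\mc Y^n(H)\rangle_{\nu_p}\;=\;\tfrac{1}{2t}\,\bb E_p\bigl[\langle \mc M^n(H)\rangle_t\bigr],
\]
which by the first step converges to $\chi(p)\|\nabla H\|_{L^2(\bb R)}^2$. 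The explicit constant $80$ in the statement would then arise from combining the chosen form of the Kipnis--Varadhan inequality with a triangle-inequality decomposition that separates the bulk term in \eqref{eq433} from the localized boundary terms.

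The main obstacle is verifying that the localized perturbations at the slow site do not affect the leading-order limit in either computation. Only four bonds adjacent to the origin are affected, with modifications proportional to $g(n)$ or $1-g(n)$; each such bond contributes $O(1/n)$ to the relevant sums and therefore vanishes in the limit. A closely parallel calculation for the slow-bond model is carried out in \cite{fgn2}, and only minor bookkeeping is required to adapt that template to the slow-site case.
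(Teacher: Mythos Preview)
Your proposal is correct and is exactly what the paper intends: the authors omit the proof entirely, stating only that it consists of ``direct calculations very similar to \cite{fgn2}''. Your outline---carr\'e-du-champ computation for the quadratic variation, and the Kipnis--Varadhan inequality together with the identity $\|n^2\mf L_n\mc Y^n(H)\|_{-1,n}^2=n^2\mf D_n(\mc Y^n(H))$ for the integral term---is precisely the standard route used in \cite{fgn2} and related works, and the bookkeeping for the four bonds adjacent to the slow site is as you describe.
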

\begin{proposition}\label{prop42}
Consider $g(n)=\alpha n^{-\beta}$, $\alpha>0$, $\beta>1$, and $H\in \mc S_{\textrm{\rm Neu}}(\bb R)$. In this case,
\begin{equation*}
\lim_{n\to \infty }\bb E_p\Big[\Big(\mc M^{n}_{t}(H)\Big)^2\Big]\;=\;2 \,t\,\chi(p) \Vert \nabla_{\textrm{\rm Neu}} H\Vert_{L^2(\bb R)}^2
\end{equation*}
and 
\begin{equation*}
\limsup_{n\to\infty} \bb E_p\Big[\big(\mc I^n_t(H)\big)^2\Big]\;\leq\;80 \,t\,\chi(p)\Vert \nabla_{\textrm{\rm Neu}} H\Vert^2_{L^2(\bb R)}.
\end{equation*}
\end{proposition}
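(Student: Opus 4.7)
The proof follows the scheme of Proposition~\ref{prop41} (treating $g(n)=1+o(1)$), the essential new input being the control of the two bonds $\{-1,0\}$ and $\{0,1\}$ incident to the defect. I would treat the two claims separately.

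\textbf{First assertion.} A direct carr\'e du champ computation starting from \eqref{quadratic2} shows that the predictable quadratic variation of $\mc M^n_t(H)$ equals
\begin{equation*}
\langle \mc M^n(H)\rangle_t \;=\; \int_0^t n\sum_{x\in\bb Z}\bigl[H(\pfrac{x+1}{n}) - H(\pfrac{x}{n})\bigr]^2 \bigl\{\xi^n_x\,\eta_s(x)(1-\eta_s(x+1)) + \xi^n_{x+1}\,\eta_s(x+1)(1-\eta_s(x))\bigr\}ds,
\end{equation*}
where the prefactor $n$ combines the time rescaling $n^2$ with the $1/\sqrt n$ in \eqref{flut}. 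By stationarity, applying $\bb E_p$ replaces the integrand by its $\nu_p$-expectation. I would split the bond sum into the \emph{bulk} ($0\notin\{x,x+1\}$) and the two \emph{boundary} bonds $\{-1,0\},\{0,1\}$. For bulk bonds, $\xi^n_x=\xi^n_{x+1}=1$ and the product structure of $\nu_p$ gives expected rate $2\chi(p)$, producing a Riemann sum which converges to $2t\chi(p)\|\nabla_{\textrm{\rm Neu}}H\|_{L^2(\bb R)}^2$, using that $H\in\mc S_{\textrm{\rm Neu}}(\bb R)$ decays at infinity. For the boundary bonds, the expected rate under $\nu_p$ equals $g(n)m_p(0)(1-p) + p(1-m_p(0)) = O(g(n)) = O(n^{-\beta})$ by \eqref{nub}, while $n\bigl[H(\pfrac{\pm 1}{n})-H(0)\bigr]^2$ stays uniformly bounded; hence these contributions are $O(n^{1-\beta})\to 0$.

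\textbf{Second assertion.} Since $\nu_p$ is reversible for $\mf L_n$ and the integrand $n^2\mf L_n\mc Y^n(H)$ has the form $\mc L G$ with $\mc L=n^2\mf L_n$ and $G=\mc Y^n(H)$, the Kipnis--Varadhan inequality for additive functionals of reversible Markov processes (see Appendix~1 of \cite{kl}) gives
\begin{equation*}
\bb E_p\bigl[(\mc I^n_t(H))^2\bigr]\;\le\;80\,t\,\|\mc L\,\mc Y^n(H)\|^2_{-1}\;=\;80\,t\,n^2\,\mf D_n(\mc Y^n(H)),
\end{equation*}
where the identity $\|\mc L G\|^2_{-1} = \langle G,-\mc L G\rangle_{\nu_p}$ follows from reversibility. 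Plugging $G=\mc Y^n(H)$ into \eqref{Dirichlet} and repeating the bulk/boundary split of the first part, I would show that $n^2\mf D_n(\mc Y^n(H))\to\chi(p)\|\nabla_{\textrm{\rm Neu}}H\|^2_{L^2(\bb R)}$; the claim then follows on taking $\limsup$.

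\textbf{Main obstacle.} The delicate point is the left bond $\{-1,0\}$: since a generic $H\in\mc S_{\textrm{\rm Neu}}(\bb R)$ may jump at the origin, $H(\pfrac{-1}{n})-H(0)$ need not vanish, in stark contrast to $H(\pfrac{1}{n})-H(0)=O(n^{-2})$ (which uses $\nabla_{\textrm{\rm Neu}}H(0)=0$ from condition~(4) of Definition~\ref{schwartz}). All of the smallness at this bond must therefore come from the rate itself, which is $O(g(n))=O(n^{-\beta})$. The hypothesis $\beta>1$ is exactly what makes $n\cdot O(1)\cdot O(n^{-\beta})=O(n^{1-\beta})$ vanish; for $\beta\le 1$ a genuine boundary contribution would persist, consistent with the regime conjectured to have different behaviour in Section~\ref{s5}.
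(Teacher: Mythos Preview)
The paper does not give its own proof of this proposition, deferring instead to \cite{fgn2}; so there is no argument to compare against directly. Your strategy---a carr\'e du champ computation for the martingale part, and Kipnis--Varadhan via the identity $\|\mc L G\|_{-1}^2=\langle G,-\mc L G\rangle_{\nu_p}=n^2\mf D_n(G)$ for the integral part---is correct and is the natural route. The identification of the left bond $\{-1,0\}$ as the delicate term, where the possible jump of $H$ at the origin forces all the smallness to come from the expected rate $g(n)m_p(0)(1-p)+p(1-m_p(0))=O(n^{-\beta})$, is exactly right and is where the hypothesis $\beta>1$ enters.

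One slip in the first assertion: you write that ``$n[H(\tfrac{\pm 1}{n})-H(0)]^2$ stays uniformly bounded'', but for the left bond this quantity equals $n\cdot\bigl(H(0^-)-H(0^+)+O(n^{-2})\bigr)^2$, which grows linearly in $n$ whenever $H$ is discontinuous at $0$. What stays bounded is $[H(\tfrac{-1}{n})-H(0)]^2$ without the prefactor; combining the prefactor $n$ with the rate $O(n^{-\beta})$ then yields $O(n^{1-\beta})$, consistent with what you write immediately after and spell out in the ``main obstacle'' paragraph. So the conclusion stands, but that sentence should either drop the factor $n$ or treat the two bonds separately.
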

 
 The next result is concerned with convergence at initial time, for either case of the function $g(n)$.

\begin{proposition} \label{convergence at time zero}
 $\{\mc Y^n_0\}_{n\in\bb N}$ converges in distribution to $\mc Y_0$, as $n\to\infty$,
where $\mc Y_0$
 is a Gaussian field with mean zero and covariance given by \eqref{eq:covar1}.
\end{proposition}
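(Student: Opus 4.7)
The plan is to exploit the fact that under the invariant measure $\nu_p$ the occupation variables $\{\eta_0(x)\}_{x\in\bb Z}$ are independent (though not identically distributed because of the discrepancy at the origin), so $\mc Y^n_0(H)$ is a sum of independent, bounded, centered random variables, to which a classical CLT applies. The idea is then to combine one-dimensional CLT, the Cramér–Wold device, and a tightness criterion of Mitoma type in the relevant nuclear space.

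First I would verify convergence of the variance. Since $\bar\eta_0(x):=\eta_0(x)-m_p(x)$ is centered with $\mathrm{Var}_p(\bar\eta_0(x))=m_p(x)(1-m_p(x))$, independence gives
\begin{equation*}
\mathrm{Var}_p\bigl(\mc Y^n_0(H)\bigr)
\;=\;\frac{1}{n}\sum_{x\in\bb Z}H(x/n)^2\,m_p(x)(1-m_p(x))\,.
\end{equation*}
For $x\neq 0$ the variance factor equals $\chi(p)$, while at $x=0$ it is bounded independently of $n$, so the $x=0$ term contributes $O(1/n)$. By Riemann-sum convergence, which works in both the Schwartz and the Neumann-Schwartz cases (the single discontinuity of $H$ at $0$ is a measure-zero obstruction),
\begin{equation*}
\lim_{n\to\infty}\mathrm{Var}_p\bigl(\mc Y^n_0(H)\bigr)\;=\;\chi(p)\int_{\bb R}H(u)^2\,du\,.
\end{equation*}
The one-dimensional CLT then follows from Lyapunov's condition: each summand $n^{-1/2}H(x/n)\bar\eta_0(x)$ is bounded in absolute value by $\|H\|_\infty/\sqrt n$, so the third-moment sum is $O(n^{-1/2})$, and $\mc Y^n_0(H)\Rightarrow\mc N(0,\chi(p)\|H\|_{L^2}^2)$.

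Next I would upgrade to finite-dimensional distributions. For any $H_1,\dots,H_k$ in the test function space and any scalars $a_1,\dots,a_k$, the linearity of $\mc Y^n_0$ lets me apply the previous step to $H=\sum a_iH_i$, which together with the Cramér–Wold device shows that $(\mc Y^n_0(H_1),\dots,\mc Y^n_0(H_k))$ converges jointly to a centered Gaussian vector. The covariance limit is computed by polarization, or directly from independence:
\begin{equation*}
\lim_{n\to\infty}\bb E_p\bigl[\mc Y^n_0(G)\mc Y^n_0(H)\bigr]
\;=\;\chi(p)\int_{\bb R}G(u)H(u)\,du\,,
\end{equation*}
matching \eqref{eq:covar1}.

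Finally, to obtain convergence in distribution of $\mc Y^n_0$ as random elements of $\mc S'(\bb R)$ (respectively $\mc S'_{\textrm{Neu}}(\bb R)$), I would invoke Mitoma's theorem: since both $\mc S(\bb R)$ and $\mc S_{\textrm{Neu}}(\bb R)$ are nuclear Fréchet spaces, tightness of the sequence of laws of $\mc Y^n_0$ on the dual is equivalent to tightness of the real-valued sequences $\{\mc Y^n_0(H)\}_n$ for each fixed test function $H$. This one-dimensional tightness is immediate from the variance bound above. Combined with the convergence of finite-dimensional distributions, this yields the claimed convergence. The main technical point to be careful about is, in the Neumann case, that the discontinuity at $0$ of $H\in\mc S_{\textrm{Neu}}(\bb R)$ does not affect the Riemann-sum argument (it is a single point) and that the uniform bound $|H(x/n)|\le\|H\|_{k,\ell,+}+\|H\|_{k,\ell,-}$ needed for Lyapunov's condition is provided by the semi-norms defining $\mc S_{\textrm{Neu}}(\bb R)$; both are routine verifications.
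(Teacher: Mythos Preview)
Your proposal is correct and follows the standard route that the paper defers to (it suppresses the argument, citing \cite[Prop.~3.2]{fgn2} \emph{mutatis mutandis}): independence under $\nu_p$, a Lyapunov/Lindeberg CLT for the triangular array $n^{-1/2}H(x/n)\bar\eta_0(x)$, Cram\'er--Wold for joint laws, and the observation that the single anomalous site $x=0$ contributes only $O(n^{-1})$ to the variance. One small terminological point: what you invoke at the end is not Mitoma's criterion (which concerns path spaces $\mc D([0,T],E')$) but rather the Fernique/Minlos-type characterization of convergence in $E'$ for nuclear Fr\'echet $E$; the conclusion is the same, and the nuclearity of $\mc S_{\textrm{Neu}}(\bb R)$---essentially two independent half-line Schwartz spaces---is established in \cite{fgn3}.
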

\textit{Mutatis mutandis}, the same proof of \cite[Prop. 3.2]{fgn2} applies and is suppressed here.

\subsection{Tightness} Here we prove tightness of the process $\{\mc Y_t^n; t \in [0,T]\}_{n \in \bb N}$ in both cases of $g(n)$.
 First we notice that by  Mitoma's
criterion \cite{Mit} and the fact that $\mc S(\bb R)$ and $\mc S_{\textrm{\rm Neu}}(\bb R)$ are Fr\'echet spaces, it is enough to prove tightness of the sequence of real-valued processes $\{\mc Y_t^n(H); t \in [0,T]\}_{n \in \bb N}$,
where $H\in{\mc {S}(\bb R)}$ if we consider $g(n)=1+o(1)$, and $H\in\mc S_{\textrm{Neu}}(\bb R)$ if we consider $g(n)=\alpha n^{-\beta}$, $\alpha>0$, $\beta>1$.

 In order to prove tightness of $\{\mc Y^{n}_t(H) : 0\le t \le T\}_{n\in\bb N}$, we shall prove
tightness of each term in the formula \eqref{M2}.

Fix  a test function $H$ belonging to the respective space for each case of $g(n)$. By \eqref{M2}, it is enough to prove tightness of the stochastic processes $\{\mc Y_0^n(H)\}_{n \in
\bb N}$, $\{ \mc I_t^n(H); t \in [0,T]\}_{n \in \bb N}$, and $\{\mc M_t^n(H); t \in [0,T]\}_{n \in \bb N}$. 

By Proposition \ref
{convergence at time zero} we have convergence at initial time, hence $\{\mc Y_0^n(H)\}_{n \in
\bb N}$ is obviously tight.

To show tightness of the remaining  real-valued processes we use the  Aldous criterion:
\begin{proposition}[Aldous' criterion]
 A sequence $\{x_t^n; t\in [0,T]\}_{n \in \bb N}$ of real-valued processes is tight with respect to the Skorohod topology of $\mc
D([0,T],\bb R)$ if:
\begin{itemize}
\item[(i)]
$\displaystyle\lim_{A\rightarrow{+\infty}}\;\limsup_{n\rightarrow{+\infty}}\;\mathbb{P}\Big(\sup_{0\leq{t}\leq{T}}|x_{t
}^n |>A\Big)\;=\;0\,,$

\item[(ii)] for any $\varepsilon >0\,,$
 $\displaystyle\lim_{\delta \to 0} \;\limsup_{n \to {+\infty}} \;\sup_{\lambda \leq \delta} \;\sup_{\tau \in \mc T_T}\;
\mathbb{P}(|
x_{\tau+\lambda}^n- x_{\tau}^n| >\varepsilon)\; =\;0\,,$
\end{itemize}
where $\mc T_T$ is the set of stopping times bounded by $T$.
\end{proposition}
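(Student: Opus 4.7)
The plan is to deduce Aldous' criterion from the classical Prohorov--Skorohod characterization of tightness in $\mc D([0,T],\bb R)$. Recall that a family of laws on $\mc D([0,T],\bb R)$ is tight if and only if the following two conditions hold: (a) for every $\eta>0$ there exists $A>0$ such that $\limsup_n \bb P(\sup_{t\le T}|x_t^n|>A)\le \eta$; and (b) for every $\eta,\varepsilon>0$ there exists $\delta>0$ such that $\limsup_n \bb P(w'(x^n,\delta)>\varepsilon)\le \eta$, where the c\`adl\`ag modulus $w'(x,\delta)$ is the infimum over partitions $0=t_0<t_1<\cdots<t_r=T$ with $t_i-t_{i-1}>\delta$ of $\max_i \sup_{s,t\in[t_{i-1},t_i)}|x(s)-x(t)|$. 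Since (a) is exactly hypothesis (i), the substantive work is to derive (b) from (ii).

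I would attack (b) by the stopping-time chaining argument originally due to Aldous. Fix $\varepsilon > 0$ and build recursively $\tau_0^n = 0$ and
\begin{equation*}
\tau_{k+1}^n \;=\; \inf\bigl\{t > \tau_k^n : |x_t^n - x_{\tau_k^n}^n| > \varepsilon\bigr\}\wedge T\,.
\end{equation*}
Each $\tau_k^n$ is a bounded stopping time, hence an element of $\mc T_T$. By construction, for $t\in[\tau_k^n,\tau_{k+1}^n)$ one has $|x_t^n - x_{\tau_k^n}^n|\le \varepsilon$, so the oscillation of $x^n$ on each such half-open interval is at most $2\varepsilon$. Consequently, on the event that $\tau_{k+1}^n - \tau_k^n > \delta$ for every $k$ with $\tau_k^n < T$, taking the partition $\{\tau_k^n\}$ (with the last point adjusted to $T$) yields $w'(x^n,\delta)\le 2\varepsilon$.

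The main obstacle is to control the probability that some increment $\tau_{k+1}^n - \tau_k^n$ is too small. Applying hypothesis (ii) at the stopping time $\tau_k^n$ gives
\begin{equation*}
\bb P\bigl(\tau_{k+1}^n - \tau_k^n\le \delta,\ \tau_k^n<T\bigr)\;\le\; \sup_{\lambda\le \delta}\sup_{\tau\in \mc T_T}\bb P\bigl(|x_{\tau+\lambda}^n - x_\tau^n|>\varepsilon\bigr) \;=:\; \phi_n(\delta,\varepsilon)\,,
\end{equation*}
which tends to zero as $\delta\downarrow 0$, uniformly in $n$. To bound the number of stopping times that can occur, note that on the event $\{\sup_{t\le T}|x_t^n|\le A\}$ consecutive values $x_{\tau_{k}^n}^n$ differ by at least $\varepsilon$ and are all bounded in absolute value by $A$, so at most $K(A,\varepsilon):=\lceil 2A/\varepsilon\rceil+1$ distinct values of $k$ satisfy $\tau_k^n<T$. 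A union bound then yields
\begin{equation*}
\bb P\bigl(w'(x^n,\delta)>2\varepsilon\bigr)\;\le\; K(A,\varepsilon)\,\phi_n(\delta,\varepsilon)\;+\;\bb P\bigl(\sup_{t\le T}|x_t^n|>A\bigr)\,.
\end{equation*}
Taking $\limsup_n$, then $\delta\downarrow 0$ via (ii), and finally $A\to\infty$ via (i) establishes condition (b) and hence the claimed tightness.
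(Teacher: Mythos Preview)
The paper does not prove this proposition; it is stated as a standard tool and used without argument. So there is no ``paper's proof'' to compare against, and your attempt stands on its own.

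Your overall strategy (reduce to the Prohorov--Skorohod conditions and chain stopping times) is the right one, but there is a genuine gap in the step where you bound the number of stopping times. You argue that on $\{\sup_{t\le T}|x_t^n|\le A\}$ the values $x^n_{\tau_k^n}$ are bounded by $A$ and consecutive ones differ by at least $\varepsilon$, and you conclude from this that there can be at most $\lceil 2A/\varepsilon\rceil+1$ such times. This inference is false: a sequence in $[-A,A]$ whose successive terms differ by at least $\varepsilon$ can be arbitrarily long, since it may oscillate (e.g.\ $0,\varepsilon,0,\varepsilon,\ldots$). Sup-norm control does not bound the number of $\varepsilon$-oscillations of a c\`adl\`ag path. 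Without a valid bound on the number of stopping times the union bound collapses, and the argument does not close.

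A second, smaller issue: in passing from $\{\tau_{k+1}^n-\tau_k^n\le\delta\}$ to the right-hand side you are implicitly taking $\lambda=\tau_{k+1}^n-\tau_k^n$, which is random, whereas hypothesis (ii) as stated has the supremum over deterministic $\lambda\le\delta$. This can be repaired (e.g.\ by discretizing $\lambda$ or by noting that $\tau_k^n+\lambda$ is again a stopping time and reformulating), but it should be said explicitly. The correct completion of the stopping-time chaining argument---controlling the number of $\tau_k^n$'s that fall before $T$---is more delicate; see for instance Billingsley, \emph{Convergence of Probability Measures} (2nd ed.), Theorem~16.10, or Jacod--Shiryaev, \emph{Limit Theorems for Stochastic Processes}, Theorem~VI.4.5.
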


For the martingale term, the claim (i) of Aldous' criterion is achieved by an application of Doob's inequality
together with Proposition \ref{prop41} or Proposition \ref{prop42} (depending on the chosen $g$).

 By Proposition \ref{prop41} or Proposition \ref{prop42}, the  claim (i) of Aldous' criterion can be easily checked for the integral term. It
remains to check (ii). Fix a stopping time $\tau \in \mc T_T$ and suppose that $g(n)=1+o(1)$. By Chebychev's inequality,
\begin{equation*}
\begin{split}
\mathbb{P}_p\big(\big| \mc M_{\tau+\lambda}^n(H) - \mc M_\tau^n(H)\big| >\varepsilon\big)&\leq \frac{1}{\varepsilon^2} \mathbb{E}_p\big[ \big( \mc M_{\tau+\lambda}^n(H) - \mc M_\tau^n(H)\big)^2\big].\\
	\end{split}
\end{equation*}
Thus, by Proposition \ref{prop41},
\begin{equation*}
\begin{split}
\limsup_{n\to\infty} \mathbb{P}_p\big(\big| \mc M_{\tau+\lambda}^n(H) - \mc M_\tau^n(H)\big| >\varepsilon\big) & \leq  \frac{1}{\varepsilon^2} 2\chi(p)\,\lambda \Vert \nabla H \Vert^2_{L^2(\bb R)}\\
&\leq \frac{1}{\varepsilon^2} 2\chi(p)\,\delta \Vert  \nabla H\Vert^2_{L^2(\bb R)},
\end{split}
\end{equation*}
which vanishes as $\delta\rightarrow{0}$. 
Similarly, 
 \begin{equation*}
\begin{split}
\mathbb{P}_p\big(\big|  \mc I_{\tau+\lambda}^n(H) -  \mc I_\tau^n(H)\big| >\varepsilon\big)
	&\leq \frac{1}{\varepsilon^2} \mathbb{E}_p\big[ \big( \mc I_{\tau+\lambda}^n(H) - \mc I_\tau^n(H)\big)^2\big].\\
	\end{split}
\end{equation*}
Again by Proposition \ref{prop41}, we obtain
\begin{equation*}
\limsup_{n\to\infty}\mathbb{P}_p\big(\big|  \mc I_{\tau+\lambda}^n(H) -  \mc I_\tau^n(H)\big| >\varepsilon\big) \leq \frac{80t}{\varepsilon^2}\delta\,\chi(p)\Vert \nabla H\Vert^2_{L^2(\bb R)},
\end{equation*}
which vanishes as $\delta\rightarrow{0}$. 

The proof in the case $g(n)=\alpha n^{-\beta}$ is analogous  (invoking is this case Proposition \ref{prop42}) and for this reason will be omitted.
This finishes the proof of tightness.

\subsection{Characterization of limit points for  $g(n)=1+o(1)$}
We shall prove that any limit of $\{\mc Y^n_t(H)\}_{n\in \bb N}$ is concentrated on solutions of the martingale problem described in Proposition \ref{pp1}, with $H\in \mc S(\bb R)$. Suppose that $\{\mc Y^n_t\}_{n\in \bb N}$ converges along a subsequence to $\mc Y_t$. In slight abuse of notation, we denote this convergent subsequence also by $\{\mc Y^n_t\}_{n\in \bb N}$.

In this case $H\in \mc S(\bb R)$ is smooth, hence  we have 
\begin{equation*}
\Big| n^2\Big[H(\pfrac{x+1}{n})+H(\pfrac{x-1}{n})-2H(\pfrac{x}{n})\Big] -\Delta H(x)\Big|\leq \pfrac{c_H}{n}.
\end{equation*}
A similar analysis to the one presented in \eqref{eq433} for the hydrodynamic limit implies that
\begin{equation*}
\mc M^{n}_{t}(H)\;:=\; \mc Y^n_{t}(H)-  \mc Y^n_{0}(H)-\int_{0}^{t} \mc Y^n_{s}(\Delta H)\,ds + e(n)\,,
\end{equation*}
where the error function $e(n)$ is bounded, in modulus, by $cn^{-1/2}$. Since we are supposing that $\{\mc Y^n_t\}_{n\in \bb N}$ converges, we conclude that $\{\mc M_t^n(H)\}_{n\in \bb N}$ converges.

By similar arguments to those presented in \cite{fgn2}, we know that the sequence of martingales $\{\mc M^n_t(H)\}_{n\in \bb N}$ is uniformly integrable. This implies that the limit of $\{\mc M^n_t(H)\}_{n\in \bb N}$, which we denote by $\mc M_t(H)$, is again a martingale. By Proposition \ref{prop41}, its quadratic variation is $2\chi(p) \, t\,\Vert \nabla H\Vert_{L^2(\bb R)}^2$. Now, Proposition \ref{pp1} finishes the characterization of limit points in this case.

\subsection{Characterization of limit points for $g(n)=\alpha n^{-\beta}$}

We shall prove in this case that any limit of $\{\mc Y^n_t(H)\}_{n\in \bb N}$ with $H\in \mc S_{\textrm{Neu}}(\bb R)$ is a solution of the martingale problem described in Proposition \ref{pp2}.

In this situation, there is no  analogous result of Proposition \ref{prop:3.2}. The key ingredient here will be the following  tricky lemma:
\begin{lemma}\label{Lemma62} Let $g(n)=\alpha n^{-\beta}$ and $x=\pm 1$. For some constant $C>0$ not depending on $n$ the estimates
\begin{equation*}
\bb E_p\Bigg[ \Big(\int_0^t\Big(n^{3/2}\eta_s(x)(1-\eta_s(0))-\alpha n^{\frac{3}{2}-\beta}\eta_s(0)(1-\eta_s(x))\Big)\,ds\Big)^2\Bigg]\leq Cn^{1-\beta}\,,
\end{equation*}
hold.
\end{lemma}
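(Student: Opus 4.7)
The plan is to apply Dynkin's formula to the test function $h(\eta)=\eta(0)$ and then exploit the reflection symmetry $\eta(y)\mapsto\eta(-y)$ to extract the one-sided bound. A direct inspection of \eqref{generator} shows that $\mathcal{L}_n\eta(0)=V_1(\eta)+V_{-1}(\eta)$, where $V_x(\eta):=\eta(x)(1-\eta(0))-g(n)\eta(0)(1-\eta(x))$, so the integrand in the lemma equals $n^{3/2}V_x$. Dynkin's formula for the diffusively rescaled process reads
\[
\eta_t(0)-\eta_0(0)\;=\;n^2\int_0^t(V_1+V_{-1})(\eta_s)\,ds\,+\,M_t,
\]
with $M_t$ a mean-zero martingale of predictable quadratic variation $\langle M\rangle_t=n^2\int_0^t\Gamma(\eta(0))(\eta_s)\,ds$. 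I would compute the carré du champ and use the marginal \eqref{nub} to show $\mathbb{E}_{\nu_p}[\Gamma(\eta(0))]=4pg(n)(1-p)/(g(n)(1-p)+p)\leq 4\alpha(1-p)\,n^{-\beta}$, which gives $\mathbb{E}_p[M_t^2]\leq C\,t\,n^{2-\beta}$. Combined with the sharp bound $\mathbb{E}_p[(\eta_t(0)-\eta_0(0))^2]\leq 2\operatorname{Var}_{\nu_p}(\eta(0))=O(g(n))$, which follows from stationarity and the two-point inequality $\mathbb{E}[\eta_t(0)\eta_0(0)]\geq m_p(0)^2$ afforded by attractiveness, division by $n^{1/2}$ yields
\[
\mathbb{E}_p\Big[\Big(\int_0^t n^{3/2}(V_1+V_{-1})(\eta_s)\,ds\Big)^2\Big]\;\leq\;C\,n^{1-\beta}.
\]

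Next I would pass from the symmetric combination to the individual $V_1$ (the case $x=-1$ is then identical by symmetry). The reflection $\sigma:\eta(y)\mapsto\eta(-y)$ preserves $\nu_p$ and the dynamics and swaps $V_1\leftrightarrow V_{-1}$, so $\mathbb{E}_p[(\int V_1)^2]=\mathbb{E}_p[(\int V_{-1})^2]$, whence the polarization identity
\[
\mathbb{E}_p\Big[\Big(\int_0^t V_1\,ds\Big)^2\Big]\;=\;\tfrac14\mathbb{E}_p\Big[\Big(\int_0^t(V_1+V_{-1})\Big)^2\Big]+\tfrac14\mathbb{E}_p\Big[\Big(\int_0^t(V_1-V_{-1})\Big)^2\Big]
\]
reduces the task to bounding the antisymmetric integral. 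For this I would exploit the factorization
\[
V_1(\eta)-V_{-1}(\eta)\;=\;(\eta(1)-\eta(-1))\bigl[(1-\eta(0))+g(n)\eta(0)\bigr],
\]
so that by the $\nu_p$-independence of $\eta(0)$ from $\eta(\pm1)$ the variance $\operatorname{Var}_{\nu_p}(V_1-V_{-1})$ equals $2p(1-p)\{(1-m_p(0))+m_p(0)g(n)^2\}=O(g(n))$. I would then apply a Kipnis--Varadhan-type variational bound to $\int_0^t(V_1-V_{-1})\,ds$ using the antisymmetric test function $f(\eta)=\eta(-1)-\eta(1)$, together with an averaged, truncated corrector built from partial sums $\sum_{y=1}^K\frac{K+1-y}{K}\eta(y)$ (and its reflection) on a mesoscopic box of size $K$. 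The carré du champ of this corrector contributes a martingale term of order $n^{2-\beta}t+n^2 t/K$, and the surviving bulk remainder is controlled by the duality with the symmetric random walk heat kernel; optimizing in $K$ delivers the bound $O(n^{1-\beta})$ also for the antisymmetric part, and summing concludes the proof.

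The hard part will be controlling the antisymmetric combination $V_1-V_{-1}$: unlike $V_1+V_{-1}$, it is not the exact image of a single conserved microscopic quantity under $\mathcal{L}_n$, and Dynkin applied to $\eta(-1)-\eta(1)$ leaves a bulk gradient remainder $(\eta(2)-\eta(1))-(\eta(-2)-\eta(-1))$ whose $L^2(\nu_p)$-norm is of order one, so that neither a pure Cauchy--Schwarz estimate nor a naive Dynkin identity suffices. Overcoming this requires combining the smallness $\|V_1-V_{-1}\|_{L^2(\nu_p)}^2=O(g(n))$ supplied by the factorization with the bulk mixing of the SSEP (captured by a variational $H^{-1}$-type estimate), and it is precisely here that the restriction $\beta>1$ enters quantitatively.
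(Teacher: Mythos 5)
Your treatment of the symmetric combination $V_1+V_{-1}=\mf L_n\eta(0)$ via Dynkin's formula applied to $\eta(0)$ is correct and is a legitimately different (for that piece, simpler) route than the paper's. But the proof has a genuine gap exactly where you yourself locate ``the hard part'': the antisymmetric combination is never actually estimated, and the machinery you sketch for it does not close in the regime $\beta>1$. After polarization you need $\bb E_p\big[\big(\int_0^t(V_1-V_{-1})\,ds\big)^2\big]\le Cn^{-2-\beta}$. With the tent corrector $u_K=\sum_{y=1}^K\frac{K+1-y}{K}(\eta(y)-\eta(-y))$ one computes $\mf L_n u_K=-(V_1-V_{-1})+\frac1K\big[(\eta(K+1)-\eta(1))-(\eta(-K-1)-\eta(-1))\big]$, and Dynkin's formula then produces, besides the boundary-bond martingale contribution $tn^{-2-\beta}$ (which is fine), a static term $n^{-4}\,\bb E_p\big[(u_K(\eta_t)-u_K(\eta_0))^2\big]\asymp Kn^{-4}$ and a bulk carr\'e-du-champ contribution $\asymp t\,n^{-2}K^{-1}$. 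Requiring both to be $\le n^{-2-\beta}$ forces $K\ge n^{\beta}$ and $K\le n^{2-\beta}$ simultaneously, which is impossible precisely when $\beta>1$; no choice of $K$ delivers the bound, and indeed the orders you quote for the martingale term already exceed the target $O(n^{1-\beta})$ by a factor of $n$. Moreover the remaining tool you invoke, ``duality with the symmetric random walk heat kernel,'' is unavailable here: as the paper stresses in the introduction, the slow-site model is \emph{not} self-dual, so the space-time two-point function is not given by a random-walk kernel. Finally, smallness of $\Vert V_1-V_{-1}\Vert_{L^2(\nu_p)}^2=O(g(n))$ by itself gives nothing for the time integral, so the burden really does fall on the unproven $H_{-1}$ estimate.

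The gap is avoidable because the symmetric/antisymmetric decomposition is itself a detour. The paper estimates each $V_x$ directly by the Kipnis--Varadhan inequality: since $\nu_p(\{\eta^{0,1}\})=g(n)\,\nu_p(\{\eta\})$ on $\{\eta(0)=1,\eta(1)=0\}$ (detailed balance at the slow bond), the change of variables $\eta\mapsto\eta^{0,1}$ yields
\begin{equation*}
\int\big[n^{3/2}\eta(1)(1-\eta(0))-\alpha n^{\frac32-\beta}\eta(0)(1-\eta(1))\big]f\,d\nu_p
=\alpha n^{\frac32-\beta}\int\eta(0)(1-\eta(1))\big(f(\eta^{0,1})-f(\eta)\big)\,d\nu_p\,,
\end{equation*}
so the whole integrand is $g(n)$ times a discrete gradient of $f$ across the bond $\{0,1\}$. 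Young's inequality with $A=\sqrt n$ absorbs the square of that gradient exactly into the $\{0,1\}$-term of $n^2\mf D_n(f)$ (whose coefficient is $n^2g(n)/2$) and leaves $\tfrac{\alpha}{2}n^{1-\beta}\int\eta(0)(1-\eta(1))\,d\nu_p\le Cn^{1-\beta}$. In other words, each single-bond current individually has $H_{-1}$-norm of order $g(n)$, so there is no hard antisymmetric remainder to control; to repair your write-up, replace the corrector argument by this one-line change of variables applied separately to $V_1$ and $V_{-1}$.
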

\begin{proof} We prove only the inequality for $x=1$, the case $x=-1$ is completely analogous.
By the  Kipnis-Varadhan inequality (see \cite[Proposition A1.6.1]{kl}), the expectation
\begin{equation*}
\bb E_p\Bigg[ \Big(\int_0^t\Big(n^{3/2}\eta_s(1)(1-\eta_s(0))-\alpha n^{\frac{3}{2}-\beta}\eta_s(0)(1-\eta_s(1))\Big)\,ds\Big)^2\Bigg]
\end{equation*}
is less or equal than
\begin{equation*}
\begin{split}
   t \sup_{f\in L^2(\nu_p)}\Big\{\int n^{3/2}  \eta(1)& (1-\eta(0))f(\eta)\nu_p(d\eta) \\
& -
\alpha n^{\frac{3}{2}-\beta}\int\eta(0)(1-\eta(1))f(\eta)\nu_p(d\eta) -n^2\mf D_n(f)\Big\}.
\end{split}
\end{equation*}
where $\mf D_n$ is the Dirichlet form\footnote{Notice that here the Dirichlet form is evaluated at $f$ instead of $\sqrt{f}$ as in Subsection \ref{sub33}.} given in \eqref{Dirichlet}. In the first integral inside the supremum above we perform the change of variables $\eta\mapsto \eta^{1,0}$. Thus, the expression above can be rewritten as
\begin{equation}\label{eq63}
\begin{split}
   t \sup_{f\in L^2(\nu_p)}\Big\{\int n^{3/2}  \eta(0)& (1-\eta(1))f(\eta^{0,1})\nu_p(d\eta^{0,1}) \\
& -
\alpha n^{\frac{3}{2}-\beta}\int\eta(0)(1-\eta(1))f(\eta)\nu_p(d\eta) -n^2\mf D_n(f)\Big\}.
\end{split}
\end{equation}
Now, from \eqref{nub} we have that
\begin{equation*}
\frac{\nu_p(\{\eta^{0,1}\})}{\nu_p(\{\eta\})} = \Big(\pfrac{m_p(0)}{p}\Big)^{\eta(1)}\Big(\pfrac{p}{m_p(0)}\Big)^{\eta(0)}\Big(\pfrac{1-p}{1-m_p(0)}\Big)^{1-\eta(0)}\Big(\pfrac{1-m_p(0)}{1-p}\Big)^{1-\eta(1)}
\end{equation*}
from where we get
 \begin{equation*}
 \int n^{3/2}  \eta(0) (1-\eta(1))f(\eta^{0,1})\nu_p(d\eta^{0,1})=
  \int  \alpha n^{\frac{3}{2}-\beta} \eta(0)(1-\eta(1))f(\eta^{1,0})\nu_p(d\eta)\,,
 \end{equation*}
and therefore \eqref{eq63} is the same as
\begin{equation*}
t \sup_{f\in L^2(\nu_p)}\Big\{\alpha n^{\frac{3}{2}-\beta}\int \eta(0)(1-\eta(1))\Big(f(\eta^{1,0})-f(\eta)\Big)\nu_p(d\eta) -n^2\mf D_n(f)\Big\}\,.
\end{equation*}
 By the  inequality $xy\leq \pfrac{Ax^2}{2}+\pfrac{y^2}{2A}$, $\forall A>0$, the expression above is smaller than
\begin{equation*}
\begin{split}
t \sup_{f\in L^2(\nu_p)}\Big\{\frac{A\alpha n^{\frac{3}{2}-\beta}}{2}\int \eta(0)(1-\eta(1))&\Big(f(\eta^{1,0})-f(\eta)\Big)^2\nu_p(d\eta)\\ &+\frac{\alpha n^{\frac{3}{2}-\beta}}{2A}\int \eta(0)(1-\eta(1)) \nu_p(d\eta)-n^2\mf D_n(f)\Big\}
\end{split}
\end{equation*}
for any $A>0$. Picking $A= \sqrt{n}$ the last expression becomes equal to 
\begin{equation*}
\begin{split}
t \sup_{f\in L^2(\nu_p)}\Big\{\frac{\alpha n^{2-\beta}}{2}\int \eta(0)(1-\eta(1))&\Big(f(\eta^{1,0})-f(\eta)\Big)^2\nu_p(d\eta)\\ &+\frac{\alpha  n^{{1-\beta}}}{2}\int \eta(0)(1-\eta(1)) \nu_p(d\eta)-n^2\mf D_n(f)\Big\}.
\end{split}
\end{equation*}
Since the Dirichlet form  \eqref{Dirichlet} is a sum of positive terms, and since the first term above is exactly the first term in $n^2\mf D_n(f)$, we conclude that the expression above is less or equal than
\begin{equation*}
\frac{t\alpha n^{1-\beta}}{2}\int \eta(0)(1-\eta(1)) \nu_p(d\eta) =  Cn^{1-\beta},
\end{equation*}
for $C>0$  not depending on $n$.
This concludes the proof.
\end{proof}

Let us proceed to the characterization of limit points in this case. We begin with an observation that will strongly simplify the analysis. 

First of all, we notice that $\mc M^n_t(H)$ defined in \eqref{M2} is a martingale where $H\in \mc S_{\textrm{Neu}}(\bb R)$ does not play any special role, except the decay at infinity to make the sum well defined. 
In fact, we can take $H=H_n$ depending on $n$. We will do that in the following way. For each $n\in \bb N$, we impose $H_n(\pfrac{x}{n})=H(\pfrac{x}{n})$ for all $x\neq 0$ while for $x=0$
 we impose
\begin{equation}\label{eq451}
H_n(\pfrac{0}{n})=\pfrac{1}{2}\Big(H(\pfrac{1}{n})+
H(\pfrac{-1}{n})\Big).
\end{equation}
In this way, we obtain 
\begin{equation}\label{eq455}
H_n(\pfrac{1}{n})+H_n(\pfrac{-1}{n})-2H_n(\pfrac{0}{n})=0\;,\;\forall n\in \bb N
\end{equation}
 which cancels two parcels in \eqref{eq433}. To simplify notation we will write $H$ instead of $H_n$, keeping in mind  \eqref{eq455}.

We examine carefully all the  terms in \eqref{eq433}. By the discussion above, both $\Theta(n,p,H)$ and
\begin{equation}\label{zero}
n^{3/2}g(n)\Big[ H(\pfrac{1}{n})+ H(\pfrac{-1}{n})-2 H(\pfrac{0}{n})\Big]\eta(0)
\end{equation}
vanish. Let us see the remaining terms.  The first sum on the right hand side of \eqref{eq433} is equal to 
\begin{equation*}
\frac{1}{\sqrt{n}}\sum_{x\neq -1,0,1}\Delta_{\textrm{Neu}}H(\pfrac{x}{n})\,\bar{\eta}_s(x)
\end{equation*}
plus an error of order $O(n^{-1/2})$. 
Since the side derivatives of $H\in\mc S_{\textrm{Neu}}(\bb R)$ at zero vanish, we also have that the second and third terms in \eqref{eq433} are equal to 
\begin{equation*}
\begin{split}
&n^{3/2}\Big[H(\pfrac{0}{n})-H(\pfrac{1}{n})\Big]\bar{\eta}_s(1) + n^{3/2}\Big[H(\pfrac{0}{n})-H(\pfrac{-1}{n})\Big]\bar{\eta}_s(-1)\\
\end{split}
\end{equation*}
plus another error of order $O(n^{-1/2})$. By \eqref{eq451}, the expression above
can be rewritten as
\begin{equation*}
\frac{n^{3/2}}{2}\Big[ H(\pfrac{-1}{n})-H(\pfrac{1}{n})\Big]\eta(1)+\frac{n^{3/2}}{2}\Big[ H(\pfrac{1}{n})-H(\pfrac{-1}{n})\Big]\eta(-1).
\end{equation*}
 Last expression  together with the remaining two parcels in \eqref{eq433} gives us the sum of 
\begin{equation}\label{eq47}
\frac{n^{3/2}}{2}\Big[ H(\pfrac{-1}{n})-H(\pfrac{1}{n})\Big]\eta(1)+\frac{n^{3/2}}{2} (1-g(n))\Big[
H(\pfrac{1}{n})-H(\pfrac{-1}{n})\Big]\eta_s(0)\eta_s(1)
\end{equation}
and
\begin{equation}\label{eq48}
\frac{n^{3/2}}{2}\Big[ H(\pfrac{1}{n})-H(\pfrac{-1}{n})\Big]\eta(-1)+\frac{n^{3/2}}{2} (1-g(n))\Big[
H(\pfrac{-1}{n})-H(\pfrac{1}{n})\Big]\eta_s(0)\eta_s(-1).
\end{equation}
At this point, we will use \eqref{zero}. Regardless of the fact 
that \eqref{zero} is null, we can split it in two parts, namely $n^{3/2}g(n)\Big[H(\pfrac{1}{n})-H(\pfrac{0}{n})\Big]\eta(0)$ and $n^{3/2}g(n)\Big[H(\pfrac{-1}{n})-H(\pfrac{0}{n})\Big]\eta(0)$. The first one we add to \eqref{eq47} and the second one to \eqref{eq48}. Recalling \eqref{eq451}, it gives us the sum of
\begin{equation}\label{eq410}
\frac{n^{3/2}}{2}\Big(H(\pfrac{-1}{n})-H(\pfrac{1}{n})\Big)
\Big[\eta_s(1)(1-\eta_s(0))-g(n)\eta_s(0)(1-\eta_s(1))\Big]
\end{equation}
and
\begin{equation}\label{eq411}
\frac{n^{3/2}}{2}\Big(H(\pfrac{1}{n})-H(\pfrac{-1}{n})\Big)
\Big[\eta_s(-1)(1-\eta_s(0))-g(n)\eta_s(0)(1-\eta_s(-1))\Big].
\end{equation}
Lemma \ref{Lemma62} asserts that the time integrals of expressions \eqref{eq410} and \eqref{eq411} are asymptotically negligible in $L^2$.

For $H\in \mc S_{\textrm{Neu}}(\bb R)$, the sequence of martingales $\{\mc M^n_t(H)\}_{n\in \bb N}$ presented in \eqref{M2} is uniformly integrable. This implies that the $L ^2$-limit  of $\{\mc M^n_t(H)\}_{n\in \bb N}$, denoted by $\mc M_t(H)$, is again a martingale which quadratic variation given $2\chi(p)\,t\, \Vert \nabla_{\textrm{Neu}} H\Vert_{L^2(\bb R)}^2$, assured by Proposition \ref{prop41}.

The entire previous discussion on the integral part of $\mc M^n_t(H)$ lead us to conclude that $\mc M_t(H)$ satisfies
\begin{equation*}
\mc M_{t}(H)\;:=\; \mc Y_{t}(H)-  \mc Y_{0}(H)-\int_{0}^{t} \mc Y_{s}(\Delta_{\textrm{Neu}} H)\,ds\,,
\end{equation*}
which concludes the characterization of limit points by Proposition \ref{pp2}.

\section{Open questions and conjectures}\label{s5}
As presented in this paper, the  critical defect strength 
and behaviour at the critical point
remains open in sense that it  is not clear what should be the limit for $0\leq \beta\leq 1$ when $g(n)=\alpha n^{-\beta}$, $\alpha>0$. One  conceivable scenario is that for any $\beta>0$,  both hydrodynamic limit and fluctuations would be driven by a disconnect behavior 
 corresponding to  Neumann boundary conditions, meaning that the critical point
would be $\beta_c=0$.

 Our guess instead is that the correct critical point should be achieved at $\beta=1$, much more close to the scenario of \cite{fgn1} where a {\it slow bond} is considered instead of a {\it slow site}. 
 The physical intuition behind the  dynamical phase transition taking place at $\beta=1$ is the fact
that, in a large but finite system and at large but finite times, the particle current, which is of
diffusive origin, will be of order $1/n$ everywhere (with some space-dependent amplitude that depends on the
initial state) before equilibrium is reached. However, a weak site with $\beta>1$
cannot allow such a current to flow and hence it acts like a total blockage corresponding to Neumann boundary conditions. On the other hand, a defect rate with $\beta<1$ does not
make a current of order $1/n$ impossible, corresponding to a macroscopically irrelevant local
perturbation of the particle system.

  Specifically we conjecture that the behavior for $\beta=1$ should be described by the partial differential equation 
  \begin{equation}\label{PDE2}
\begin{cases}
 \partial_t \rho(t,u) \; =\; \p_{u}^2 \rho(t,u)\,,&t \geq 0,\, u\in (0,1)\,,\\
 \partial_u \rho(t,0^+) \; =\;\partial_u \rho(t,0^-)= \frac{\alpha}{2}(\rho(t,0^+)-\rho(t,0^-))\,,&t \geq 0\,,\\
  \rho(0,u) \;=\; \rho_0(u)\,, &u \in (0,1)\,,\\
\end{cases}
\end{equation}
where $0^+$ and $0^-$ denotes right and left side limits, respectively. 
This conjecture 
is motivated by the observation that the equation above is the hydrodynamic equation of two neighbouring slow bonds at $\beta=1$, a claim made precise in next section. 
Thus our conjecture in the slow site setting is the following.
\begin{itemize}
\item   For $\alpha>0$ and $0\leq \beta<1$, the hydrodynamic limit and the equilibrium fluctuations for $g(n)=\alpha n^{-\beta}$ should be driven by the heat equation
 periodic boundary conditions, achieving the same limits we have obtained for 
$g(n)=1+o(n)$.
\item For $\alpha>0$ and $\beta=1$, the  hydrodynamic limit and the equilibrium fluctuations for $g(n)=\alpha n^{-\beta}$ should be driven by  \eqref{PDE2} in the same sense  of \cite{fgn1,fgn2,fgn3} with the correction of $1/2$ in the boundary condition.
\end{itemize}

\section{Hydrodynamics of the SSEP with $k$ neighboring slow bonds}\label{s6}

Here we characterize the hydrodynamic behavior of the SSEP with $k$ neighboring slow bonds. This is an additional result we append in order to support the conjecture presented in Section \ref{s5}.   We point out that, for the regime $\beta=1$, the result presented here is 
{\it not} a corollary of \cite{fgn1,fgn2}, since here we consider $k$ \textit{neighboring} slow bonds, while the mentioned references considered \textit{macroscopically separated} slow bonds. 

The notation and topology issues will be the same as those  we have considered in this paper. Fix $k$ a positive integer. The SSEP with $k$ neighboring slow bonds is the Markov process on $\{0,1\}^{\bb T_n}$ defined through the generator
\begin{equation*}
\begin{split}
\mf L_{n}f(\eta)\;=\;&\sum_{x=0}^{k-1}\,\pfrac{\alpha}{n^\beta}\,[f(\eta^{x,x+1})-f(\eta)]+\!\!\!\!\!\!   \sum_{\at{x\in \bb T_n}{x\notin \{0,\ldots,k-1\}}}[f(\eta^{x,x+1})-f(\eta)]
\end{split}
\end{equation*}
acting on functions $f:\{0,1\}^{\bb T_n}\rightarrow \bb{R}$.
\begin{figure}[H]
  \centering
  \includegraphics{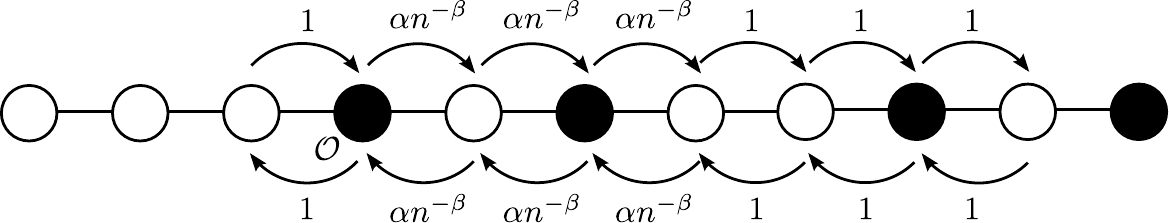}
  \label{fig:3}
\caption{Exclusion process with three neighboring {\emph{slow bonds}}.}
\end{figure}
\begin{definition}\label{heat equation Robin_k}
 Let $\rho_0:\bb T\to [0,1]$ be a measurable function. We say that $\rho$ is a weak solution of the  heat equation with Robin's boundary conditions given by
 \begin{equation}\label{her_k}
\begin{cases}
 \partial_t \rho(t,u) \; =\; \p_{u}^2 \rho(t,u)\,,&t \geq 0,\, u\in (0,1)\,,\\
 \partial_u \rho(t,0^+) \; =\;\partial_u \rho(t,0^-)= \frac{\alpha}{k}(\rho(t,0^+)-\rho(t,0^-))\,,&t \geq 0\,,\\
  \rho(0,u) \;=\; \gamma(u)\,, &u \in (0,1)\,,\\
\end{cases}
\end{equation}
if $\rho$ belongs to $L^2(0,T;\mathcal{H}^1)$ and for all $t\in [0,T]$ and for all  $H\in \A$, such that
\begin{equation}\label{eq82}
\p_u H(0^+)=\p_u H(0^-)=\pfrac{\alpha}{k}(H(0^+)-H(0^-))\,,
\end{equation}
holds that
\begin{equation*}
\begin{split}
&\< \rho_t\,,\,H\>-\<\gamma\,,\,H\> - \int_0^t\big\< \rho_s\,,\,\p^2_u H\big\>\, ds
=\;0\,.\\
\end{split}
\end{equation*}
\end{definition}

\begin{proposition}\label{th:k}
For each $n\in\bb N$, let $\mu_n$ be a Bernoulli product measure on $\{0,1\}^ {\bb T_n}$ as in \eqref{eq288}.
   Then, for any $t>0$, for every $\delta>0$ and every $H\in C(\bb{T})$, it holds that
\begin{equation}\label{eq:2.2ap}
\lim_{n\to\infty}
\bb P_{\mu_n} \Big\{\eta_\bola : \, \Big\vert \pfrac{1}{n} \sum_{x\in\bb{T}_n}
H(\pfrac{x}{n})\, \eta_{t}(x) - \int_{\bb T} H(u)\, \rho(t,u) du \Big\vert
> \delta \Big\} \;=\; 0\,,
\end{equation}
where
 \begin{itemize}
\item
if $0\leq \beta<1$, the function $\rho$ is the unique weak solution of \eqref{hep};
\vspace{0.1cm}

\item
 if $\beta= 1$, the function $\rho$ is the unique weak solution of  \eqref{her_k};
 \vspace{0.1cm}

\item  if $\beta>1$, the function $\rho$ is the unique weak solution of  \eqref{hen}.
\end{itemize}
\end{proposition}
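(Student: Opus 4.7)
The plan is to mimic the three-step strategy of Section~\ref{s3}: (i)~tightness of $\{\bb Q^n_{\mu_n}\}_{n\in\bb N}$, (ii)~an energy estimate placing the density of any limit point in $L^2(0,T;\mc H^1)$, and (iii)~characterization of limit points as weak solutions of the relevant PDE in each of the three regimes of $\beta$. Steps (i) and (ii) carry over with essentially no change. For tightness it suffices to test against $H\in C^2(\bb T)$; Dynkin's formula yields a vanishing martingale plus a finite-variation integral whose integrand $n^2\mf L_n\<\pi^n_s,H\>$ is uniformly bounded in $n$ (the $k$ local corrections near the slow region are each of order $\|H\|_{C^2}$, since for such $H$ the one-sided values $H(0^+)$ and $H(0^-)$ coincide), so the Arzel\`a--Ascoli argument of Subsection~\ref{sub4.1} goes through. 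The energy estimate Proposition~\ref{Prop3.3} follows since the Dirichlet form of the process with $k$ slow bonds still dominates the homogeneous one restricted to $\bb T_n\setminus\{0,\ldots,k\}$, which is the only ingredient used in \cite[Lemma~5.8]{fgn1}.

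The heart of the argument is step (iii). I would perform a site-wise summation by parts in $n^2\mf L_n\<\pi^n_s,H\>$ for $H\in\A$ (now possibly discontinuous at $0\in\bb T$) and expand in Taylor series. The bulk yields $\<\pi^n_s,\p_u^2 H\>$, the interior slow sites $z=1,\ldots,k-1$ contribute only $O(n^{-1})$ by a second-order cancellation, and the boundary sites $z=-1,0,k$ together produce the anomalous contribution
\[
n\bigl(H(0^+)-H(0^-)\bigr)\bigl(\eta_s(-1)-\eta_s(0)\bigr)\,-\,\p_u H(0^-)\eta_s(0)\,+\,\p_u H(0^+)\eta_s(k)\,+\,o_n(1),
\]
while the slow-bond terms themselves add $\alpha n^{1-\beta}\sum_{j=0}^{k-1}[H(\pfrac{j}{n})-H(\pfrac{j+1}{n})][\eta_s(j)-\eta_s(j+1)]=O(n^{-\beta})$. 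For $0\le\beta<1$ all anomalous terms vanish (continuity of $H\in C^2(\bb T)$ kills the leading $n(H(0^+)-H(0^-))$ factor), and the argument of Subsection~\ref{sub3.4} with periodic test functions yields~\eqref{hep}. For $\beta>1$ the slow bonds act as blockages, a replacement lemma in the spirit of Proposition~\ref{Replacement} identifies the surviving boundary terms, and test functions in $\A$ with vanishing one-sided derivatives at $0$ yield~\eqref{hen}.

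The substantive case is $\beta=1$. All the boundary terms above are of order one and do not vanish individually. The key replacement to be established is
\[
n\bigl(\eta_s(-1)-\eta_s(0)\bigr)\;\longrightarrow\;\pfrac{\alpha}{k}\bigl(\rho_s(0^-)-\rho_s(0^+)\bigr),
\]
the microscopic counterpart of Ohm's law for $k$ resistors in series: the macroscopic density drop $\rho_s(0^-)-\rho_s(0^+)$ equilibrates in a linear profile across the $k$ slow bonds, each sustaining a fraction $1/k$ of the drop, yielding effective conductance $\alpha/k$. Once this is in hand, an analog of Proposition~\ref{Replacement} replaces $\eta_s(0)$ by $\rho_s(0^-)$ and $\eta_s(k)$ by $\rho_s(0^+)$, and using the Robin compatibility~\eqref{eq82} the limiting boundary contribution becomes
\[
\pfrac{\alpha}{k}\bigl(H(0^+)-H(0^-)\bigr)\bigl[(\rho_s(0^-)-\rho_s(0^+))-\rho_s(0^-)+\rho_s(0^+)\bigr]\;=\;0,
\]
so the weak formulation of~\eqref{her_k} follows by the Portmanteau argument of Subsections~\ref{sub3.4}--\ref{sub3.5} and uniqueness of the weak solution.

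The main obstacle is the rigorous justification of the factor $1/k$. In~\cite{fgn1} the case $k=1$ is clean because the current across the unique slow bond is $\alpha[\eta(0)-\eta(1)]$ directly and no series argument is needed; with $k$ neighboring slow bonds, the occupation profile on the $k$ interior sites is nontrivially coupled and the uniform-gradient condition must be extracted from a dynamical fast-equilibration estimate. I would expect this to follow from a Kipnis--Varadhan bound of the type of Lemma~\ref{Lemma62} applied jointly to the $k$ slow bonds---the slow subgraph itself relaxes on a timescale of order $nk^2$, which is subdiffusive---combined with the standard replacement-by-spatial-averages on either side of the slow region. This is the only genuinely new ingredient beyond the slow-bond analysis of~\cite{fgn1,fgn2}.
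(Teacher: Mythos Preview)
Your treatment of tightness, the energy estimate, and the two non-critical regimes $\beta\neq 1$ is in line with the paper. The substantive divergence is in the case $\beta=1$, and there your proposed route has a genuine gap.

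The paper does \emph{not} attempt to prove any ``Ohm's law'' replacement for the microscopic current $n(\eta_s(-1)-\eta_s(0))$. Instead it replaces the test function $H$ by a discrete function $G_n$ that equals $H$ outside the slow region and is the \emph{linear interpolation} between $H(0^-)$ and $H(0^+)$ on the sites $\{0,1,\dots,k\}$; see \eqref{eq86}. With this choice the discrete second difference of $G_n$ vanishes at the interior sites $1,\dots,k-1$, and at the two interface sites $0$ and $k$ the surviving terms assemble exactly into $\partial_u H(0^\pm)\mp(\alpha/k)\big(H(0^+)-H(0^-)\big)$, which is zero by the Robin condition \eqref{eq82} imposed on $H$. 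Since $G_n$ differs from $H$ only at $k+1$ sites, $\langle\pi^n_s,G_n\rangle$ and $\langle\pi^n_s,H\rangle$ differ by $O(1/n)$, so the martingale decomposition based on $G_n$ yields the weak formulation directly. No new replacement lemma is required; the factor $1/k$ falls out of the slope of the interpolation.

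Your approach, by contrast, produces the term $n\big(H(0^+)-H(0^-)\big)\big(\eta_s(-1)-\eta_s(0)\big)$ and then needs to show that, after time integration, $n(\eta_s(-1)-\eta_s(0))$ can be replaced by $(\alpha/k)(\eta_s(0)-\eta_s(k))$. A Kipnis--Varadhan bound does not deliver this. Set $V(\eta)=n(\eta(-1)-\eta(0))-(\alpha/k)(\eta(0)-\eta(k))$ and test with $f=-(1/n)\eta(0)$: one computes $2\langle V,f\rangle_{\nu_p}- n^2\mf D_n(f)\to\chi(p)>0$, so $\Vert V\Vert_{-1,n}^2$ is bounded below uniformly in $n$ and the variance estimate stalls at $O(1)$. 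The culprit is the \emph{fast} bond $\{-1,0\}$ attached to site $0$: its contribution to $n^2\mf D_n$ has weight $n^2$, not $n$, so the slow-region relaxation you invoke cannot absorb the factor $n$ in front of $\eta(-1)-\eta(0)$. A current-matching argument via compensated jump martingales fails for the same reason: the martingale across $\{-1,0\}$ has quadratic variation of order $n^2t$, leaving an $O(1)$ error after dividing by $n$. The linear-interpolation device is exactly what prevents this $O(n)$ term from ever appearing.
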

  \begin{proof}
  As usual, the proof consists in proving tightness of the process induced by the empirical measure, plus the uniqueness of the limit points.

  Tightness can be handled in same way as we have done in Subsection \ref{sub4.1}. Characterization of the limit points for the cases $\beta\in [0,1)$ and $\beta\in{(1,\infty)}$ follows closely the steps of \cite{fgn1,fgn2}. The case $\beta=1$ is tricky  and
  described in more detail  below.

  Let $G_n:\{\pfrac{0}{n},\pfrac{1}{n},\ldots,\pfrac{n-1}{n}\}\to\bb R$ be some function depending on $n$.
  Performing elementary computations,
  $n^{2}\mf L_{n}\<\pi^{n}_{s},G_n\>$  can be rewritten as the sum of
\begin{equation}\label{eq83}
\begin{split}
&\Big[\,n\Big(G_n(\pfrac{k+1}{n})-G_n(\pfrac{k}{n})\Big)+
\alpha n^{1-\beta}\Big(G_n(\pfrac{k-1}{n})-G_n(\pfrac{k}{n})\Big)\Big]\,\eta_s(k)\\
+&\,\sum_{j=1}^{k-1}\Big[\,\alpha n^{1-\beta}\Big(G_n(\pfrac{j+1}{n})-G_n(\pfrac{j}{n})\Big)+
\alpha n^{1-\beta}\Big(G_n(\pfrac{j-1}{n})-G_n(\pfrac{j}{n})\Big)\Big]\,\eta_s(j)\\
+&\,\Big[\,\alpha n^{1-\beta}\Big(G_n(\pfrac{1}{n})-G_n(\pfrac{0}{n})\Big)+
 n\Big(G_n(\pfrac{-1}{n})-G_n(\pfrac{0}{n})\Big)\Big]\,\eta_s(0)\\
\end{split}
\end{equation}
and
\begin{equation}\label{eq84a}
\,n\!\!\!\!\!\!\sum_{\at{x\in \bb T_n}{x\notin \{0,\ldots,k\}}}\!\!\!\!\!\!\Big(G_n(\pfrac{x+1}{n})+G_n(\pfrac{x-1}{n})-2G_n(\pfrac{x}{n})\Big)\,\eta_{s}(x)\,.
\end{equation}
Let   $H\in \A$ satisfying \eqref{eq82}. We define $G_n$ by
\begin{equation}\label{eq86}
G_n(\pfrac{x}{n})\;=\;
\begin{cases}
\, H(\pfrac{x}{n})\, ,&  \textrm{ if }\quad  x\in \{k+1,\ldots, n-1\}\,,\\
H(0^-)+ \frac{x}{k}(H(0^+)-H(0^-)) \, ,&  \textrm{ if }\quad  x\in \{0,\ldots, k\}\,.\\
\end{cases}
\end{equation}
In other words, the function $G_n$ is equal to $H$ outside the region where the slow bonds are contained. At the sites $\{0,1,\ldots,k\}$, the function $G_n$ is a linear interpolation of $H(0^+)$ and $H(0^-)$.

Since $H\in C^2[0,1]$, \eqref{eq84a} is close to $\< \pi^n_s, \p^2_u H\>$.
We claim now that  \eqref{eq83} converges to zero, as $n\to\infty$. First, notice that \eqref{eq86} tells us that
\begin{equation*}
\sum_{j=1}^{k-1}\Big[\,\alpha n^{1-\beta}\Big(G_n(\pfrac{j+1}{n})-G_n(\pfrac{j}{n})\Big)+
\alpha n^{1-\beta}\Big(G_n(\pfrac{j-1}{n})-G_n(\pfrac{j}{n})\Big)\Big]\,\eta_s(j)
\end{equation*}
is null. Let us analyze the remaining terms in \eqref{eq83}. Since $\beta=1$, the term
\begin{equation*}
n\Big(G_n(\pfrac{k+1}{n})-G_n(\pfrac{k}{n})\Big)+
\alpha n^{1-\beta}\Big(G_n(\pfrac{k-1}{n})-G_n(\pfrac{k}{n})\Big)
\end{equation*}
converges to
\begin{equation*}
\p_u H(0^+)+
\pfrac{\alpha}{k} \Big(H(0^-)-H(0^+)\Big)\,,
\end{equation*}
which vanishes by \eqref{eq82}. The same analysis assures  that 
\begin{equation*}
\alpha n^{1-\beta}\Big(G_n(\pfrac{1}{n})-G_n(\pfrac{0}{n})\Big)+
 n\Big(G_n(\pfrac{-1}{n})-G_n(\pfrac{0}{n})\Big)
\end{equation*}
converges to zero as $n\to\infty$.
  Provided by this claim and similar arguments of those in Section \ref{s5} one can conclude the proof.
  \end{proof}

\section*{Acknowledgements}
TF was supported through a project PRODOC/UFBA and a project JCB 1708/2013-FAPESB.

PG thanks CNPq (Brazil) for support through the research project
``Additive functionals of particle systems'', Universal n. 480431/2013-2, also thanks FAPERJ ``Jovem Cientista do Nosso Estado''
for the grant E-25/203.407/2014 and the Research Centre of Mathematics of the University of Minho, for the financial support provided by ``FEDER" through the ``Programa Operacional Factores de Competitividade  COMPETE" and by FCT through the research project PEst-C/MAT/UI0013/ 2011.

GMS acknowledges financial support by DFG.

 The authors thank CMAT at the University of Minho, where this work was initiated, and 
PUC-Rio, where this work was finished, for the warm hospitality.

\bibliographystyle{amsplain}

\end{document}